\title[Finite presheaves]{Finite presheaves and $\cala$-finite generation of unstable algebras mod nilpotents}
\author{Geoffrey Powell}
\address{Laboratoire angevin de recherches en mathématiques (LAREMA),
CNRS, Université d’Angers, Université Bretagne Loire, 
2 Bd lavoisier 49045 Angers Cedex 01
}
\email{Geoffrey.Powell@math.cnrs.fr}
\urladdr{http://math.univ-angers.fr/~powell/}
\keywords{unstable algebra -- nilpotents -- Steenrod algebra -- finite 
generation -- unstable Hopf algebra -- presheaf -- polynomial functor}
\subjclass[2000]{Primary 55S10; 55U99}
\thanks{These results were presented at the Conference in honour of Lionel 
Schwartz at Paris in June 2017. This work was partially supported by the ANR 
Project {\em ChroK}, 
{\tt ANR-16-CE40-0003}.
}
\newtheorem{THM}{Theorem}
\newtheorem{thm}{Theorem}[subsection]
\newtheorem{prop}[thm]{Proposition}
\newtheorem{cor}[thm]{Corollary}
\newtheorem{lem}[thm]{Lemma}
\theoremstyle{definition}
\newtheorem{defn}[thm]{Definition}
\newtheorem{exam}[thm]{Example}
\theoremstyle{remark}
\newtheorem{rem}[thm]{Remark}
\newtheorem{nota}[thm]{Notation}
\renewcommand{\phi}{\varphi}
\renewcommand{\epsilon}{\varepsilon}
\renewcommand{\hom}{\mathrm{Hom}}
\newcommand{\po}{\ar@{}[dr]|(.7){\text{\pigpenfont R}}}
\newcommand{\pb}{\ar@{}[dr]|(.3){\text{\pigpenfont J}}}
\newcommand{\ob}{\mathrm{ob}}
\newcommand{\g}{\mathfrak{g}}
\newcommand{\gtop}{\g_{\mathrm{top}}}
\newcommand{\unstalg}{\mathscr{K}}
\newcommand{\unstalgc}{\mathscr{K}^+}
\newcommand{\ffin}[1][n]{\f_{#1}^{\mathrm{fin}}}
\newcommand{\f}{\mathscr{F}}
\newcommand{\unst}{\mathscr{U}}
\newcommand{\fdvs}{\vs_f}
\newcommand{\vs}{\mathscr{V}}
\newcommand{\pfset}{\mathscr{P}\hspace{-2pt}\mathscr{S}}
\newcommand{\nil}{\mathscr{N}il}
\newcommand{\op}{^\mathrm{op}}
\newcommand{\field}{\mathbb{F}}
\newcommand{\fieldp}{{\mathbb{F}}}
\newcommand{\fieldt}{{\mathbb{F}}}
\newcommand{\prshv}{\widehat{\fdvs}}
\newcommand{\pvc}{(\prshv)_c}
\newcommand{\nat}{\mathbb{N}}
\newcommand{\cala}{\mathscr{A}}
\newcommand{\gfin}{\prshv^{\mathrm{fin}}}
\newcommand{\gps}{\prshv^{\mathrm{profin}}}
\newcommand{\gpsc}{(\gps)_c}
\newcommand{\gcoan}{\prshv^{\omega}}
\newcommand{\End}{\mathrm{End}}
\newcommand{\reg}{\mathrm{reg}}
\newcommand{\Aut}{\mathrm{Aut}}
\newcommand{\Surj}{\mathrm{Surj}}
\newcommand{\grass}{\mathrm{Gr}}
\newcommand{\cali}{\mathscr{I}}
\newcommand{\hopfk}{\mathscr{H}_\mathscr{K}}
\newcommand{\hopfkc}{\hopfk^+}
\newcommand{\spec}{\mathrm{Spec}}
\newcommand{\gl}{\mathrm{GL}}
\newcommand{\reals}{\mathbb{R}}
\newcommand{\calc}{\mathscr{C}}
\newcommand{\grp}{\mathsf{Gp}}
\newcommand{\fpg}{\mathscr{G}^f_p}
\newcommand{\pgfin}{\mathscr{G}^{\mathrm{fin}}_p}
\newcommand{\pgpro}{\mathscr{G}^{\omega +}_p}
\newcommand{\hq}{/\hspace{-3pt}/}
\newcommand{\dash}{\hspace{-3pt}-\hspace{-3pt}}
\newcommand{\cre}{\mathsf{cr}}
\begin{document}

\begin{abstract}
 Inspired by the work of Henn, Lannes and Schwartz on unstable algebras over the 
Steenrod algebra modulo nilpotents, 
  a characterization of unstable algebras that are $\cala$-finitely generated up 
to nilpotents is given in terms 
  of the associated presheaf, by introducing the notion of a finite presheaf. In 
particular, this gives the natural 
  characterization of the (co)analytic presheaves that are important in the 
theory of Henn, Lannes and Schwartz.   An important source of examples 
is  provided by unstable algebras of finite transcendence degree. 
  
  For unstable Hopf algebras, it is shown that the associated 
presheaf is finite if and only 
  if its growth function is polynomial. This leads to a description of unstable 
Hopf algebras modulo nilpotents in the spirit 
  of Henn, Lannes and Schwartz.
\end{abstract}

\maketitle

\section{Introduction}

The work of Henn, Lannes and Schwartz \cite{HLS} explains the 
relationship between 
unstable algebras over the mod $p$ Steenrod algebra $\cala$ and presheaves of 
profinite sets on the category 
$\fdvs$ of finite-dimensional $\field_p$-vector spaces, where $\field_p$ is the prime field. This is based upon 
Lannes' $T$-functor; namely, for $K$ an unstable algebra,  
there is a  presheaf $\g K$ given by $V \mapsto \hom_\unstalg (K, H^* (BV; 
\field_p))$, $V$ an $\field_p$-vector space, where $\unstalg$ is the category 
of 
unstable algebras.  
Motivating examples of unstable algebras are given by singular cohomology $H^* 
(Y; 
\field_p)$, for $Y$ a topological space.  Another presheaf is 
provided by homotopy classes of maps out of $BV$, $V \mapsto [BV, Y]$. These 
presheaves are related by Lannes' theory, 
 notably in relation to the Sullivan conjecture. 

One of the key results of \cite[Part II]{HLS} gives a characterization for an 
unstable algebra $K$ to be Noetherian up to nilpotents
 in terms of the associated presheaf $\g K$ (this is recalled here in Section 
\ref{subsect:HLS_trans_deg}). Likewise, the notion of transcendence degree 
 is treated in terms of $\g K$. The relevance of these  to the current 
theory is explained below.

 In the context of unstable algebras, the more general notion of being 
$\cala$-finitely generated (see Section \ref{subsect:unstalg_prelim}) is known 
to be 
 of importance. For example, for an $H$-space, Castellana, Crespo and Scherer 
\cite{CCS} showed that this condition imposes strong conditions on its 
structure as an $H$-space. 
 One of the main objectives of this paper is to relax the condition of being 
$\cala$-finitely generated in the spirit of Henn, Lannes and Schwartz, by 
considering unstable algebras
  that are $\cala$-finitely generated up to nilpotents, and to give a 
characterization in terms of the associated presheaf.

This is carried out in the first part of the paper, by introducing the notion of a finite presheaf. This is given in  terms 
of presheaves of $\field_p$-vector spaces. The latter form an abelian category  
and such a presheaf is said to be  finite if it has a finite composition 
series. 
A set-valued presheaf $X$ is defined to be finite if there exists an embedding 
 $X \hookrightarrow F_X$ with $F_X$ a finite presheaf of $\field_p$-vector 
spaces. 
 
 This definition may seem somewhat {\em ad hoc} at first view, in particular 
due 
to its dependence on 
using  presheaves of $\field_p$-vector spaces.  As shown in the second part of 
the paper,  
this condition on $F_X$  can be relaxed:  it suffices 
that $F_X$ 
  be a presheaf of finite $p$-groups that is a polynomial functor in the 
sense of Baues and Pirashvili  \cite{BP} (generalizing the Eilenberg-MacLane 
notion of a polynomial functor to an abelian category \cite{EM}, as  
recalled in Appendix \ref{sect:poly}); 
this is a consequence of Theorem 
  \ref{thm:p-finite_equiv}. 
 
 The characterization of $\cala$-finitely generated up to nilpotents is as 
follows:

 \begin{THM}
[Corollary \ref{cor:equivalent_finite_fgmodnil}]
For $K$ an unstable algebra, the following are equivalent:
\begin{enumerate}
 \item 
 $K$ is $\cala$-finitely generated up to nilpotents;
 \item 
 $\g K$ is a finite presheaf.
\end{enumerate}
  \end{THM}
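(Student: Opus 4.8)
The plan is to prove the two implications separately, working through the functor $\g$ that sends an unstable algebra $K$ to the presheaf $V \mapsto \hom_\unstalg(K, H^*(BV;\field_p))$. Throughout, I would lean on the Henn--Lannes--Schwartz machinery (Lannes' $T$-functor and the resulting adjunction) that translates algebraic finiteness conditions on $K$ into structural properties of $\g K$, together with the definition of a finite presheaf as one admitting an embedding $\g K \hookrightarrow F$ into a finite presheaf of $\field_p$-vector spaces.

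\medskip

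For the implication $(1) \Rightarrow (2)$, I would start from a choice of finitely many $\cala$-generators of $K$ modulo nilpotents. The key idea is that being $\cala$-finitely generated up to nilpotents should constrain, for each $V$, the size and the variation-with-$V$ of the set $\hom_\unstalg(K, H^*(BV;\field_p))$. Concretely, I would try to build the ambient finite presheaf $F$ of $\field_p$-vector spaces directly out of the generators: each generator $x$ of degree $d$ determines, on applying $\hom_\unstalg(K,-)$, a natural transformation controlling the value of an algebra map on $x$, and the target of that value lives in the bounded piece $H^{\leq d}(BV;\field_p)$, which is (degreewise) a polynomial functor of $V$ and hence finite as a presheaf of $\field_p$-vector spaces. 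Assembling these over the finite generating set and using that nilpotents do not affect maps into the reduced target (since $H^*(BV;\field_p)$ is reduced modulo nilpotents in the relevant sense), I expect to produce an embedding of $\g K$ into a finite product of such bounded cohomology functors, exhibiting $\g K$ as finite.

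\medskip

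For the converse $(2) \Rightarrow (1)$, I would argue contrapositively or by a boundedness extraction: assuming $\g K$ embeds in a finite presheaf $F$ of $\field_p$-vector spaces, a finite composition series for $F$ gives a uniform bound on the "degree" (in the polynomial-functor sense) and on the dimensions $\dim_{\field_p} F(V)$ as a function of $\dim V$. The task is to convert this presheaf-level finiteness back into a finite $\cala$-generation statement for $K$ modulo nilpotents. Here I would invoke Lannes' theory to recover algebraic information about $K/\sqrt{0}$ from $\g K$, using that $\g$ restricted to unstable algebras mod nilpotents is close to fully faithful onto its image, so that the finiteness of the presheaf forces the nil-reduction of $K$ to be generated over $\cala$ by elements whose degrees are bounded by the length/degree data of the composition series. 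Producing the actual finite generating set — rather than merely a bound — is where the real work lies.

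\medskip

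The main obstacle I anticipate is precisely this last step: translating a purely presheaf-theoretic finiteness (a finite composition series in the abelian category of presheaves of $\field_p$-vector spaces) into the existence of finitely many $\cala$-module generators of $K$ up to nilpotents. The difficulty is that $\g$ is not an equivalence, and passing from control of the values $\hom_\unstalg(K, H^*(BV))$ back to control of $K$ itself requires the full strength of the Henn--Lannes--Schwartz reconstruction results, in particular the characterization (recalled in Section \ref{subsect:HLS_trans_deg}) relating Noetherianity and transcendence degree to $\g K$. I would expect to need to interpolate through an intermediate notion — likely the profinite or (co)analytic refinement of $\g K$ mentioned in the introduction — to bridge the gap, and to check carefully that the finite-presheaf condition is exactly the invariant that matches $\cala$-finite generation once nilpotents are discarded.
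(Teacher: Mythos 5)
Your direction $(1) \Rightarrow (2)$ is essentially the paper's argument in disguise: choose an $F$-epimorphism $UM \rightarrow K$ with $M$ a finitely generated unstable module, cover $M$ by a finite sum $\bigoplus_i F(n_i)$ of free unstable modules, and apply $\g$. Since $\g$ factors through $\unstalg/\nil$ and sends $F$-epimorphisms to monomorphisms of presheaves (Proposition \ref{prop:g_factors_nil} and \cite[Corollary II.1.4]{HLS}), and since $\g\, U(\bigoplus_i F(n_i))(V) = \bigoplus_i H^{n_i}(BV)$ is a finite functor of $V$, one gets the required embedding of $\g K$ into a finite object of $\f$. Your ``bounded cohomology functors $H^{\leq d}(BV)$'' are exactly this; the only point you should make explicit is the injectivity of $\g K \rightarrow \g L$ for an $F$-epimorphism $L \rightarrow K$, which is the HLS input you only gesture at.

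The converse is where your proposal has a genuine gap. You correctly locate the difficulty --- converting an embedding $\g K \hookrightarrow F$ with $F$ finite into a finite-generation statement for $K$ --- but the route you propose, through the Noetherian and transcendence-degree reconstruction theorems and coanalytic completions, does not supply the generators and is not the mechanism the paper uses. The missing idea is Kuhn's embedding theorem: every finite functor of $\f$ embeds in a finite direct sum of symmetric power functors, so one may assume $F = \bigoplus_i S^{n_i}$ (Proposition \ref{prop:conseq_emb_thm}). Applying $\kappa$, which sends monomorphisms of presheaves to $F$-epimorphisms of unstable algebras \cite{HLS}, yields an $F$-epimorphism $U(\bigoplus_i F(n_i)) \cong \kappa (\bigoplus_i S^{n_i}) \rightarrow \kappa \g K$ (Example \ref{exam:kappa_Sn}); composing with the $F$-isomorphism $K \rightarrow \kappa \g K$ of Proposition \ref{prop:kappa_g} exhibits $K$ as $\cala$-finitely generated up to nilpotents. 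Note that the Noetherian machinery you invoke could not substitute for this: a finite presheaf need not be induced by a Noetherian $\End(\field_p^d)$-set (Example \ref{exam:grass} gives $\grass_n$ for $n>1$, finite but not Noetherian, with $\kappa(\grass_n) = \field_p \oplus \omega_n D(n)$ not even $\cala$-finitely generated), so Theorem \ref{thm:HLS_noetherian} detects a strictly stronger condition than the one at issue here.
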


  This allows the theory of \cite{HLS} to be revisited and made slightly more 
precise (see Theorem \ref{thm:HLS_improved}).

The class of finite presheaves is, as of yet, imperfectly understood; a reassuring general fact
is the following, for unstable algebras of finite transcendence degree:
  
  \begin{THM}
   [Corollary \ref{cor:trans_deg_finite_presheaf}]
   Let $K$ be an unstable algebra of finite transcendence degree. The following 
are equivalent:
   \begin{enumerate}
    \item 
     $K$ is $\cala$-finitely generated up to nilpotents;
     \item 
     $\g K$ takes values in finite sets.
   \end{enumerate}
  \end{THM}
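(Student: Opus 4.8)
The plan is to reduce everything to the first main theorem and then to isolate the role played by the transcendence degree. By Corollary~\ref{cor:equivalent_finite_fgmodnil}, statement (1) is equivalent to $\g K$ being a finite presheaf. Hence, with the finite transcendence degree hypothesis in force throughout, it suffices to prove that $\g K$ is a finite presheaf if and only if it takes values in finite sets; and, as I will explain, only the second of these two implications uses the bound on the transcendence degree.

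First I would dispose of the implication requiring no hypothesis. If $\g K$ is finite, there is by definition an embedding $\g K \hookrightarrow F$ into a presheaf $F$ of $\field_p$-vector spaces admitting a finite composition series. Every simple subquotient of $F$ is a simple functor, and such a functor is a subquotient of a standard projective $\field_p[\hom_{\fdvs}(-,V)]$, whose value at $W$ is the free $\field_p$-module on the finite set $\hom_{\fdvs}(W,V)$; being finite-dimensional over the finite field $\field_p$, its values are finite sets. Since finiteness of values is preserved under extensions (the values of a short exact sequence of vector-space presheaves are again short exact), $F$ is finite-valued, and therefore so is its sub-presheaf $\g K$. This argument is independent of the transcendence degree.

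The substance of the corollary is the converse. Here the temptation---which I would resist---is to linearise $\g K$ by forming the free $\field_p$-module presheaf $\field_p[\g K]$: even though this is finite-dimensional in each degree when $\g K$ is finite-valued, its values in general grow exponentially (as they do for an exterior-algebra functor), so it has infinitely many composition factors and is \emph{not} a finite presheaf. The finite transcendence degree must instead be leveraged to bound the degree. Concretely, I would invoke the characterisation of transcendence degree in terms of $\g K$ recalled in Section~\ref{subsect:HLS_trans_deg} to show that, if $K$ has transcendence degree $\leq d$, then $\g K$ embeds, as a presheaf of sets, into a finite-valued presheaf $F$ of $\field_p$-vector spaces that is polynomial of degree $\leq d$ in the sense recalled in Appendix~\ref{sect:poly}.

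It would then remain to establish the purely functorial statement that a finite-valued polynomial presheaf of $\field_p$-vector spaces of bounded degree has finite length. For this I would use the cross-effect machinery underlying Theorem~\ref{thm:p-finite_equiv}: such a presheaf $F$ sits in finitely many extensions relating it to its cross-effects $\cre_i F$ for $i \leq d$, each of which is again finite-valued and polynomial of degree strictly smaller than $d$, so a descending induction on $d$ produces a finite composition series for $F$. Combined with the embedding of the previous paragraph, this exhibits $\g K$ as a finite presheaf, completing the converse. The main obstacle is thus the passage from the \emph{transcendence}-degree hypothesis on $K$ to an honest \emph{polynomial}-degree bound on a vector-space presheaf receiving $\g K$; once that translation is secured, the finite-length conclusion follows formally from the polynomial-functor theory, and the crucial point throughout is that one must find such an $F$ \emph{without} the naive free linearisation.
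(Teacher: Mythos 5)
Your reduction via Corollary \ref{cor:equivalent_finite_fgmodnil}, and the easy implication (finite presheaf $\Rightarrow$ finite values), agree with the paper. The gap is in the converse, at the sentence where you say you would ``invoke the characterisation of transcendence degree \dots to show that $\g K$ embeds \dots into a finite-valued presheaf $F$ \dots polynomial of degree $\leq d$.'' Since, by Proposition \ref{prop:finite_polynomial}, ``finite-valued and polynomial'' is the same as ``finite'', that embedding \emph{is} the statement to be proved, and the dictionary of Theorem \ref{thm:HLS_trans_deg} does not produce it: all it gives is that $\g K$ is the presheaf $X_Z$ induced from the finite right $\End(\fieldp^d)$-set $Z := \g K(\fieldp^d)$, i.e. $V \mapsto Z \times_{\End(\fieldp^d)} \hom(V,\fieldp^d)$. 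Passing from ``induced from a finite set in rank $d$'' to ``embeds in a finite functor'' is exactly Theorem \ref{thm:fg_implies_finite}, and its proof is the real content you are missing. The paper linearises only the \emph{inducing set}, not the presheaf: it forms $G_Z = \fieldp[Z]\otimes_{\End(\fieldp^d)}\fieldp[\hom(-,\fieldp^d)]$ (which is not finite, but is the inverse limit of its finite quotients) and shows that the composite $X_Z \rightarrow G_Z \twoheadrightarrow q_t G_Z$ is a monomorphism for $t \gg 0$. That requires two ingredients absent from your proposal: (i) Corollary \ref{cor:criterion_mono} (resting on the rank filtration and regular elements, Proposition \ref{prop:equivalent_mono}), which says that since $X_Z = (X_Z)_{\leq d}$ a morphism out of $X_Z$ is a monomorphism as soon as it is injective on $\fieldp^d$-sections; and (ii) the fact that $G_Z(\fieldp^d) \rightarrow q_t G_Z(\fieldp^d)$ is a bijection for $t \gg 0$, by coanalyticity of $\fieldp[\hom(-,\fieldp^d)]$. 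You rightly observe that the naive linearisation $\fieldp[\g K]$ fails (indeed, Example \ref{exam:split_rank} shows finite values alone do not give finiteness), but you supply no replacement construction; asserting the existence of $F$ is not a consequence of the transcendence-degree characterisation.

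Two lesser points. The degree bound $\leq d$ you claim for $F$ is not justified and is stronger than what the paper obtains (Theorem \ref{thm:fg_implies_finite} produces some $t$ with no claim that $t \leq d$); fortunately any finite degree suffices, so you should simply drop the bound. The closing cross-effect argument that a finite-valued polynomial functor in $\f$ has finite length is correct but is just Proposition \ref{prop:finite_polynomial}, cited from Kuhn, so it need not be reproved.
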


In general it is not easy to determine whether a given presheaf 
$X$ taking values in finite sets is a finite presheaf.    A necessary condition is that the 
associated growth function defined on 
$\nat$ by 
 $    \gamma_X (t):= \log_p | X (\field_p^t)|
   $
should have polynomial growth. As illustrated by Example \ref{exam:split_rank}, 
this 
is not sufficient. 

Such growth functions occur in the work of Lannes and Schwartz \cite{LS} and of 
Grodal \cite{G}, notably in relation to the study of finite Postnikov systems, 
 for which (under suitable hypotheses), the growth functions are shown to 
have polynomial growth. Indeed, revisiting these results in the light of 
subsequent 
developments of the theory provided one of the motivations for introducing the 
notion of a finite presheaf.

Even in the case of two-stage Postnikov systems, there 
 remain basic open questions such as:  for which two-stage Postnikov systems $Y$  is $\g H^* 
(Y; \field_p)$ a finite presheaf?

 When $Y$ is an $H$-space, then the answer is yes, tying in with the work of 
\cite{CCS}. Indeed, in general in this theory, the 
situation is much better when one restricts to 
unstable Hopf algebras (for these, see Section \ref{sect:hopf}). This is the 
subject of the second part of the paper. 

For instance, one has the following, which should be of independent interest:

\begin{THM}
[Theorem \ref{thm:p-finite_equiv} and Theorem \ref{thm:pfin_finite}]
For $H$ a connected unstable Hopf algebra over $\field_p$, the following are equivalent:
\begin{enumerate}
 \item 
 the underlying unstable algebra of $H$ is $\cala$-finitely generated up to 
nilpotents;
 \item 
 $\g H$ is a finite presheaf;
 \item 
 $\g H$ takes values in finite $p$-groups and is polynomial;
 \item 
 $\gamma_{\g H}$ has polynomial growth. 
\end{enumerate}
\end{THM}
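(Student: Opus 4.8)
The plan is to prove the four conditions equivalent by first peeling off (1)$\Leftrightarrow$(2) and then running the cycle (2)$\Rightarrow$(3)$\Rightarrow$(4)$\Rightarrow$(2), the real substance lying in the last arrow. The equivalence (1)$\Leftrightarrow$(2) is immediate: it is Corollary~\ref{cor:equivalent_finite_fgmodnil} applied to the underlying unstable algebra of $H$, and uses nothing about the coproduct. The whole reason for restricting to Hopf algebras is the structural observation I would record at the outset: the coproduct of $H$ paired with the cup product of $H^*(BV;\field_p)$ makes each set $\hom_\unstalg\bigl(H, H^*(BV;\field_p)\bigr)$ a group, naturally in $V$, so that $\g H$ is a presheaf of groups on $\fdvs$; connectedness of $H$ (that is, $H^0=\field_p$) forces $\g H(0)$ to be trivial and, through the filtration of $H$ by degree, makes each $\g H(V)$ a pronilpotent pro-$p$ group, so these are finite $p$-groups exactly when the underlying sets are finite. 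This promotion of the values from sets to $p$-groups is precisely the input unavailable for a general unstable algebra.

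With $\g H$ recognised as a reduced presheaf of pro-$p$ groups, (2)$\Leftrightarrow$(3) should follow from Theorem~\ref{thm:p-finite_equiv}. By definition $\g H$ is a finite presheaf iff it embeds in a finite presheaf of $\field_p$-vector spaces, and Theorem~\ref{thm:p-finite_equiv} permits this to be tested instead against polynomial presheaves of finite $p$-groups. Since $\g H$ is itself $p$-group valued it serves as its own enveloping object, so the embedding criterion degenerates to the requirement that $\g H$ be polynomial with finite values, which is exactly~(3). The forward growth implication (3)$\Rightarrow$(4) is then a structural computation: a polynomial presheaf of finite $p$-groups admits a finite central filtration whose associated graded is a presheaf of $\field_p$-vector spaces, and a homogeneous layer of degree $n$ contributes to $\gamma_{\g H}(t)=\log_p|\g H(\field_p^t)|$ a summand that is polynomial in $t$ of degree $n$; summing over the finitely many layers shows $\gamma_{\g H}$ is polynomial.

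The remaining arrow (4)$\Rightarrow$(2) is the heart of the matter and the step I expect to be the main obstacle, since Example~\ref{exam:split_rank} shows that polynomial growth alone does \emph{not} force finiteness for arbitrary set-valued presheaves: the pro-$p$ group structure must be used in an essential way. First I would note that finiteness of $\gamma_{\g H}(t)$ at each $t$ already forces each $\g H(\field_p^t)$ to be finite, hence a finite $p$-group, so that $\g H$ is genuinely a presheaf of finite $p$-groups and the hypotheses of Theorem~\ref{thm:pfin_finite} are in force. Its engine is a statement internal to the functor category $\prshv$---that a presheaf of $\field_p$-vector spaces with polynomially bounded dimension is necessarily polynomial (equivalently, any non-polynomial such presheaf contains a subquotient of exponential growth, detected by iterating the difference functor)---applied to the associated graded of the lower $p$-central series, which is vector-space valued and whose total dimension recovers $\gamma_{\g H}$. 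The delicate point is that a priori this filtration could have length growing with $V$, so one must deduce from polynomial growth of $\gamma_{\g H}$ that the nilpotency is bounded and each graded layer is polynomial of bounded degree; only then does finite-dimensionality of the values translate into finite length, whence $\g H$ is a finite presheaf. Controlling this interplay between the group filtration and the growth function is exactly what Theorem~\ref{thm:pfin_finite} is built to handle.
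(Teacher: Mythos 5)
Your assembly is correct and is essentially the paper's own: (1)$\Leftrightarrow$(2) is Corollary \ref{cor:equivalent_finite_fgmodnil}, the promotion of $\g H$ to a presheaf of (pro-)$p$-groups is Lemma \ref{lem:ses_Hopf_group} together with Proposition \ref{prop:gH_values_finite_p-groups} (fed by Proposition \ref{prop:hopf_A_fg}), the equivalences among (3), (4) and $p$-finiteness are Theorem \ref{thm:p-finite_equiv}, and the return arrow to (2) is Theorem \ref{thm:pfin_finite}. One caveat: your account of the engine of Theorem \ref{thm:pfin_finite} is not what the paper does --- deducing polynomiality of the associated graded from polynomially bounded growth is the content of Theorem \ref{thm:p-finite_equiv}, whereas the genuinely hard passage from $p$-finite to finite-as-a-presheaf goes through coanalyticity of the group ring $\fieldp[\mathfrak{G}]$, the unstable Hopf algebra $\kappa\mathfrak{G}$, and an induction on the Frattini series using short exact sequences in $\hopfkc$; relatedly, your direct justification of (3)$\Rightarrow$(2) requires Theorem \ref{thm:pfin_finite} and not merely Theorem \ref{thm:p-finite_equiv}, although your cycle (2)$\Rightarrow$(3)$\Rightarrow$(4)$\Rightarrow$(2) closes correctly regardless.
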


As a consequence, a Henn-Lannes-Schwartz style characterization of connected unstable Hopf algebras 
up to nilpotents is given (see Theorem \ref{thm:g_Hopf_fun-pfg}). 

\part{Finite presheaves}
\section{Dramatis Person\ae}
\label{sect:personae}

This section introduces the presheaf categories that are central to the paper 
and recalls the notion 
of finite and polynomial abelian presheaves. The relationship with unstable 
algebras is explained, in 
particular recalling some of the salient points of Henn-Lannes-Schwartz theory 
\cite{HLS}.

\subsection{Presheaves on $\fieldp$-vector spaces}

Fix a prime $p$ and let $\fdvs \subset \vs$ denote the full subcategory of 
finite-dimensional spaces in the category $\vs$ of $\fieldp$-vector spaces, where $\fieldp$ denotes the prime field of characteristic $p$. 

\begin{nota}
Denote by  
 \begin{enumerate}
  \item 
  $\prshv$ the category of presheaves of sets on $\fdvs$ (i.e. contravariant 
functors 
from $\fdvs$ to  sets);
\item 
$\gps$ the category of presheaves of profinite sets on $\fdvs$, equipped with 
the forgetful functor $\gps \rightarrow \prshv$;
  \item 
  $\pvc \subset \prshv$ (respectively $\gpsc \subset \gps$) the full 
subcategory of connected 
objects, namely $X$ such that $|X(0)|=1$;
  \item 
  $\f$ the category of functors from $\fdvs \op$ to $\vs$, with abelian 
structure inherited from $\vs$.
 \end{enumerate}
\end{nota}

\begin{rem}
In the literature (eg. \cite{HLS,KI}), $\f$ usually denotes {\em 
covariant} functors. However, 
since vector space duality $V \mapsto V^\sharp$ restricts to an equivalence of 
categories 
$\fdvs \op \cong \fdvs$, the choice of variance is of 
little import. In particular, to simplify notation,  for $F$ a covariant 
functor, the associated contravariant functor $V \mapsto F (V^\sharp)$ will 
again be 
denoted by $F$.
\end{rem}

The following is standard:
 
\begin{lem}
\label{lem:split-reduced}
For $F \in \ob \f$, there is a canonical direct sum decomposition 
$
 F \cong F(0) \oplus \overline{F}, 
$
where $F(0)$ is considered as a constant functor and $\overline{F}$ is constant-free (i.e. $\overline{F}(0)=0$).
\end{lem}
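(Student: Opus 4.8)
The plan is to construct the evaluation-at-zero map and its splitting directly from the functorial structure. For any $V \in \ob \fdvs$, the unique linear maps $0 \hookrightarrow V$ and $V \twoheadrightarrow 0$ compose to the identity on the zero space, and applying the contravariant functor $F$ yields maps $F(0) \to F(V) \to F(0)$ whose composite is $\mathrm{id}_{F(0)}$. Here I am using that $F$ sends $0$ to itself as the composite $V \to 0 \to V$ is generally \emph{not} the identity, so the order matters: the retraction $F(V) \to F(0)$ is induced by the inclusion $0 \hookrightarrow V$, and the section $F(0) \to F(V)$ is induced by the projection $V \twoheadrightarrow 0$. First I would define $\overline{F}(V) := \ker\big(F(V) \to F(0)\big)$, the kernel of the retraction, which makes sense since $\f$ is abelian with structure inherited from $\vs$.

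Next I would verify that this prescription assembles into a subfunctor $\overline{F} \subseteq F$. Because the maps $F(0) \to F(V)$ and $F(V) \to F(0)$ are each natural in $V$ (they arise by applying $F$ to a natural choice of morphisms, namely the zero maps to and from the zero object), the retraction $F(V) \to F(0)$ is a natural transformation to the constant functor $F(0)$, and its kernel $\overline{F}$ is therefore a subfunctor. The splitting $F(0) \to F(V)$ then exhibits the constant functor $F(0)$ as a complementary subfunctor, giving the internal direct sum decomposition $F \cong F(0) \oplus \overline{F}$ in $\f$. The identity $\overline{F}(0) = 0$ is immediate, since at $V = 0$ the retraction $F(0) \to F(0)$ is the identity, so its kernel vanishes; this is exactly the constant-free condition.

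Finally I would address canonicity: the decomposition is canonical precisely because the section and retraction are built from the unique morphisms $0 \rightleftarrows V$ rather than from any arbitrary choice. The main thing to get right — and the only point where care is genuinely needed — is the asymmetry noted above: one must check that it is the inclusion $0 \hookrightarrow V$ (not the projection) that induces the retraction, so that the composite in the correct order is the identity and the kernel construction yields a genuine complement. Once the variance and the order of composition are pinned down, the remaining verifications are routine applications of functoriality, and the result follows.
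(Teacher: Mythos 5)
Your proof is correct and is precisely the standard argument the paper alludes to when it states the lemma without proof: the idempotent on $F(V)$ built from $0\hookrightarrow V$ and $V\twoheadrightarrow 0$ splits $F$ as the constant functor $F(0)$ plus the kernel of the natural retraction. You have the variance and the order of composition right, so nothing is missing.
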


The following results are  clear.

\begin{lem}
\label{lem:prshv_prod_coprod}
For $\prshv$, 
\begin{enumerate}
 \item 
  the coproduct $\amalg$ and product $\prod$ are inherited from the 
category of sets;
\item 
the product restricts to the product of $\pvc$;
\item 
 the coproduct of $\pvc$ is given by the  wedge product $\vee$.
\end{enumerate}
\end{lem}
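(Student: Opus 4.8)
The plan is to exploit the standard fact that limits and colimits in a presheaf category are computed objectwise, reducing everything to the corresponding (co)limits in $\set$ together with some bookkeeping of basepoints.

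For (1), since $\prshv$ is a category of $\set$-valued functors, products and coproducts exist and are computed objectwise: $(\prod_i X_i)(V) = \prod_i X_i(V)$ and $(\coprod_i X_i)(V) = \coprod_i X_i(V)$, with structure maps induced by those of the $X_i$. The universal properties follow immediately from those in $\set$, checked objectwise, and this is all that is required. For (2), I would observe that if each $X_i$ lies in $\pvc$, then $(\prod_i X_i)(0) = \prod_i X_i(0)$ is a product of singletons, hence a singleton, so the product again lies in $\pvc$. Since $\pvc \subset \prshv$ is full, an object of $\pvc$ that is a product in $\prshv$ is a fortiori the product in $\pvc$; thus the product descends without modification.

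The substance is in (3). Here the coproduct of $\prshv$ does not restrict: for $X, Y \in \pvc$ one has $(X \amalg Y)(0) = X(0) \amalg Y(0)$, a two-point set, so $X \amalg Y \notin \pvc$. The key structural input is that $0$ is a zero object of $\fdvs$: the unique morphism $V \to 0$ induces, for any connected presheaf $X$, a map $X(0) \to X(V)$ whose image is a canonical basepoint $\ast_V \in X(V)$, compatible with all structure maps. First I would check, by naturality applied to the morphisms $V \to 0$ (and using that any map in $\pvc$ is the identity on the singleton $X(0)$), that every morphism of $\pvc$ preserves these basepoints; this plays, for set-valued presheaves, the role of the canonical splitting recorded in Lemma \ref{lem:split-reduced}. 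One then defines $X \vee Y$ objectwise as the wedge of pointed sets $(X(V), \ast_V) \vee (Y(V), \ast_V)$, which is manifestly connected, and verifies its universal property: given $f \colon X \to Z$ and $g \colon Y \to Z$ in $\pvc$, basepoint preservation guarantees that the objectwise copairings glue to a well-defined natural transformation $X \vee Y \to Z$, unique with the prescribed restrictions.

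The main obstacle, such as it is, lies entirely in (3): one must pin down the canonical basepoints and confirm that all morphisms of $\pvc$ respect them, since this is precisely what makes the objectwise wedge functorial and endows it with the coproduct universal property. Parts (1) and (2) are then formal consequences of objectwise computation, which is why the statement is recorded as clear.
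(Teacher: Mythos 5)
Your proof is correct, and since the paper records this lemma without proof (``The following results are clear''), your argument is exactly the standard verification it has in mind: objectwise (co)limits for (1) and (2), and for (3) the observation that $0$ being a zero object of $\fdvs$ makes every connected presheaf canonically pointed via the maps $X(0) \to X(V)$ induced by $V \to 0$, with all morphisms of $\pvc$ basepoint-preserving, so the objectwise wedge satisfies the coproduct universal property. This is also consistent with how the paper later uses the wedge, namely as the subobject $X \times y \cup x \times Y$ of $X \times Y$ in Lemma \ref{lem:elementary-prop-finite}.
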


There is a forgetful functor $\f \rightarrow \prshv$ that retains only the 
underlying set 
of a vector space. For $F \in \ob \f$, the 
underlying presheaf lies in $\pvc$ if and only if  $F$ is constant-free.

\begin{lem}
\label{lem:adj_F_linearization}
 The forgetful functor $\f \rightarrow \prshv$ admits as left adjoint, $X 
\mapsto \fieldp [X]$, so that there is an adjunction 
\[
 \fieldp [-] :  \prshv \rightleftarrows \f .
\]
The adjunction unit is the natural inclusion $X \hookrightarrow \fieldp [X]$. 

Hence, the forgetful functor preserves products: in particular, the underlying 
presheaf of $F \oplus G$ (for $F,G \in \ob \f$) is $F \times G$.
 \end{lem}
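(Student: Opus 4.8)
The plan is to establish the adjunction
\[
 \fieldp[-] : \prshv \rightleftarrows \f
\]
by defining the functor $\fieldp[-]$ levelwise and checking the universal property. For a presheaf of sets $X \in \ob \prshv$, I would set $\fieldp[X]$ to be the presheaf of vector spaces assigning to each $V \in \ob \fdvs$ the free $\fieldp$-vector space $\fieldp[X(V)]$ on the set $X(V)$, with structure morphisms obtained by linearly extending those of $X$. This is functorial in both $V$ and $X$, so $\fieldp[-] : \prshv \rightarrow \f$ is a well-defined functor. The natural inclusion $X(V) \hookrightarrow \fieldp[X(V)]$ sending each element to the corresponding basis vector assembles into a morphism of presheaves of sets $X \hookrightarrow \fieldp[X]$ (after applying the forgetful functor to the target), which will serve as the adjunction unit.

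Next I would verify the universal property. Given $F \in \ob \f$ and a morphism $\eta : X \rightarrow F$ in $\prshv$ (where $F$ denotes the underlying presheaf of sets), I want a unique morphism $\widetilde{\eta} : \fieldp[X] \rightarrow F$ in $\f$ such that $\widetilde{\eta}$ restricted along the unit recovers $\eta$. Levelwise, this is precisely the universal property of the free vector space: a set map $X(V) \rightarrow F(V)$ extends uniquely to an $\fieldp$-linear map $\fieldp[X(V)] \rightarrow F(V)$. The only thing to check is that these levelwise extensions are compatible with the structure morphisms, i.e. that $\widetilde{\eta}$ is natural; this follows by linearity from the naturality of $\eta$, since a linear map is determined by its values on a basis and both composites agree on basis elements. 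This establishes the adjunction and identifies the unit.

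Finally I would derive the preservation statements. Since $\fieldp[-]$ is a left adjoint, the forgetful functor $\f \rightarrow \prshv$ is a right adjoint and hence preserves all limits, in particular products. Concretely, for $F, G \in \ob \f$, the product $F \oplus G$ in $\f$ is computed levelwise as $F(V) \times G(V)$ (the biproduct in $\vs$), and its underlying set at each $V$ is $F(V) \times G(V)$, which is exactly $(F \times G)(V)$ where the product of presheaves of sets is formed levelwise by Lemma \ref{lem:prshv_prod_coprod}. Thus the underlying presheaf of $F \oplus G$ is $F \times G$.

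None of the steps presents a serious obstacle, as the adjunction is the levelwise free-forgetful adjunction between sets and $\fieldp$-vector spaces transported to presheaf categories. The only point requiring mild care is the naturality of the extension $\widetilde{\eta}$, but this is routine once one observes that both sides of the required commuting square are linear and agree on the basis supplied by $X$.
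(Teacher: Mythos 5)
Your proof is correct and is exactly the standard argument the paper has in mind (the paper states this lemma without proof, treating it as the levelwise free/forgetful adjunction between sets and $\fieldp$-vector spaces). The verification of the unit, the universal property, and the deduction that the right adjoint preserves products — so that the underlying presheaf of the biproduct $F \oplus G$ is $F \times G$ — are all as expected.
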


\begin{nota}
 For $X \in \ob \prshv$ and $x \in X(0)$, let $X_x \subset X$ denote the  
connected presheaf 
with $X_x (V)$ the fibre over $x$ of the surjection $X(V) \rightarrow 
X(0)$ induced by  $0 \hookrightarrow V$.
\end{nota}

\begin{lem}
\label{lem:connected components}
 For $X \in \ob \prshv$,
\begin{enumerate}
 \item  
 there is a natural isomorphism $X \cong \amalg_{x \in 
X(0)} X_x$;
\item 
if  $X \in \ob \pvc$ and $Y \in \ob \prshv$, 
 there is a natural bijection
 \[
  \hom_{\prshv} (X, Y) 
  \cong 
  \amalg_{y \in Y(0)} \hom_{\pvc} (X, Y_y).
 \]
\end{enumerate}
\end{lem}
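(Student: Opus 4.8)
The plan is to treat the two parts in turn, the first being the substance and the second a formal consequence of it together with the connectedness hypothesis.

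For part (1), I would first record the structural fact that makes the fibre decomposition work: the category $\fdvs$ has $0$ as a zero object, so the composite $0 \hookrightarrow V \to 0$ is the identity of $0$. Applying the contravariant functor $X$, the restriction map $r_V \colon X(V) \to X(0)$ induced by $0 \hookrightarrow V$ therefore admits a section (induced by $V \to 0$), and in particular is surjective; hence the fibres $X_x(V) = r_V^{-1}(x)$, for $x \in X(0)$, partition $X(V)$. Next I would check that $x \mapsto X_x$ defines a subpresheaf: for a morphism $f \colon W \to V$, the triangle $0 \hookrightarrow W \xrightarrow{f} V$ agrees with $0 \hookrightarrow V$ (as $0$ is initial), so under $X$ one gets $r_W \circ X(f) = r_V$; this identity shows that $X(f)$ carries $X_x(V)$ into $X_x(W)$, so each $X_x$ is a subpresheaf, and $r_0 = \mathrm{id}$ gives $X_x(0) = \{x\}$, whence $X_x \in \ob \pvc$. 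Finally, since by Lemma \ref{lem:prshv_prod_coprod} the coproduct in $\prshv$ is computed objectwise in sets, at each $V$ one has $\big(\amalg_{x} X_x\big)(V) = \coprod_x r_V^{-1}(x) = X(V)$, and these identifications are natural in $V$; this yields the isomorphism $X \cong \amalg_{x \in X(0)} X_x$.

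For part (2), I would deduce the decomposition of the Hom set from the same circle of ideas. Write $X(0) = \{*\}$ since $X \in \ob \pvc$, and given $\phi \in \hom_{\prshv}(X, Y)$ set $y := \phi_0(*) \in Y(0)$. The commuting triangle above, now applied to $Y$, gives for every $V$ that the composite $X(V) \xrightarrow{\phi_V} Y(V) \xrightarrow{r_V} Y(0)$ equals $X(V) \to X(0) \xrightarrow{\phi_0} Y(0)$, which is constant with value $y$. Hence $\phi_V$ lands in $r_V^{-1}(y) = Y_y(V)$, so $\phi$ factors uniquely through the monomorphism $Y_y \hookrightarrow Y$ as a morphism $\bar\phi \colon X \to Y_y$ in $\pvc$. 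Conversely, composing any $\psi \colon X \to Y_y$ with $Y_y \hookrightarrow Y$ recovers a morphism $X \to Y$ whose value at $*$ is $y$. These assignments are mutually inverse, and since $y$ is recovered from $\phi$ as $\phi_0(*)$, partitioning $\hom_{\prshv}(X, Y)$ by the value $\phi_0(*)$ gives exactly the claimed bijection $\hom_{\prshv}(X, Y) \cong \amalg_{y \in Y(0)} \hom_{\pvc}(X, Y_y)$.

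The arguments are essentially formal, and the single point requiring care — the main (modest) obstacle — is the role of the zero object: surjectivity of $r_V$ and, above all, the functoriality of the fibres $X_x$ both rest on the identity $r_W \circ X(f) = r_V$, which is precisely the statement that $0 \hookrightarrow (-)$ is natural because $0$ is initial in $\fdvs$. Once this identity is in hand, the remaining verifications (naturality of the objectwise bijections in part (1), and that the two maps in part (2) are mutually inverse) are routine, and I would not spell them out in detail.
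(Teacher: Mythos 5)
Your proof is correct and takes essentially the same approach as the paper: part (1) is the objectwise fibre decomposition that the paper dismisses as clear, and part (2) is precisely the paper's observation that the image $\phi_0(*) \in Y(0)$ of the basepoint determines the connected component $Y_y$ through which any morphism $\phi \colon X \to Y$ factors. You merely spell out the formal details (surjectivity of $X(V) \to X(0)$, functoriality of the fibres $X_x$, and the mutually inverse correspondences) that the paper leaves implicit.
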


\begin{proof}
The first statement is clear. For the second, for a map of presheaves $f : X 
\rightarrow Y$, the image $f (x) \in Y(0)$ of 
the 
basepoint $x\in X(0)$ determines the connected component of the image of $f$.
\end{proof}

\subsection{Polynomial functors}

The following general definition applies for example to the category $\f$:
 
 \begin{defn}
 An object $F$ of an abelian category is finite if it has a finite composition 
series.  
 \end{defn}

The Eilenberg-MacLane definition of a polynomial functor \cite{EM} (see Appendix 
 \ref{sect:poly}, where a general definition is given that covers the abelian 
case) 
also applies to $\f$, 
and one 
has:

\begin{prop}
\label{prop:finite_polynomial}
 \cite{KI}
 A functor $F \in \ob \f$ is finite if and only if it is polynomial and takes 
values in $\fdvs$. 
\end{prop}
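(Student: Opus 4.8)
The plan is to prove the two implications separately, reducing at the outset to reduced functors. By Lemma \ref{lem:split-reduced} any $F \in \ob\f$ splits as $F(0) \oplus \overline{F}$; the constant summand $F(0)$ is finite exactly when $F(0)$ is finite-dimensional and is automatically polynomial of degree $0$, while being finite, being polynomial, and taking values in $\fdvs$ are each detected summand-wise. So I may assume $F$ reduced. For the implication that a polynomial functor with values in $\fdvs$ is finite, I would induct on the polynomial degree $d$. The base case $d \le 0$ is the zero functor (a reduced functor of degree $0$). For the inductive step the key device is the difference functor $\Delta F$, characterised by the natural splitting $F(V \oplus \fieldp) \cong F(V) \oplus \Delta F(V)$; this $\Delta F$ again takes values in $\fdvs$ and has polynomial degree $\le d-1$, hence is finite by the inductive hypothesis.

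The content of the inductive step is to recover the finiteness of $F$ from that of $\Delta F$ together with its top homogeneous layer. I would use that $F$ carries a finite filtration with homogeneous subquotients, the degree-$n$ layer being governed by the $n$-th cross-effect $\cre_n F$, a functor of $n$ variables that is additive in each. Here the hypothesis that $\fieldp$ be the prime field is essential: an additive functor $\fdvs\op \to \vs$ with finite-dimensional values is a finite direct sum of copies of the (self-dual) identity functor, since over $\fieldp$ the Frobenius twist is naturally isomorphic to the identity ($\lambda^p = \lambda$ for $\lambda \in \fieldp$), so that no exotic simple additive functors arise. Consequently each multilinear $\cre_n F$ with finite-dimensional values is finite, being assembled from finitely many copies of the tensor functor $V \mapsto V^{\otimes n}$, itself a finite object of $\f$; and the homogeneous layer it determines is obtained from $\cre_n F$ by diagonal restriction together with the $\Sigma_n$-action, a construction I expect to preserve finiteness. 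A dévissage along the homogeneous filtration then yields that $F$ is finite.

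For the converse, that a finite functor is polynomial with values in $\fdvs$, I would pass to a composition series $0 = F_0 \subset \cdots \subset F_m = F$ and treat its finitely many simple factors $S_i$. Each $S_i$ is cyclic, hence a quotient of a standard projective $\fieldp[\hom(V,-)]$; since $\fieldp[\hom(V,W)]$ is finite-dimensional, every $S_i$ takes values in $\fdvs$, and finite-dimensionality of $F(W)=\bigoplus_i S_i(W)$ follows by additivity of dimension. Each simple functor is moreover polynomial, being a subquotient of a tensor power $V^{\otimes n}$. Since the polynomial functors of degree $\le d$ form a Serre subcategory of $\f$, in particular closed under extensions, taking $d$ to be the maximum of the finitely many degrees of the $S_i$ shows that $F$ is polynomial of degree $\le d$.

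The hard part will be the inductive step of the first implication: making precise the passage between a homogeneous degree-$n$ functor and its multilinear cross-effect and verifying that this passage transports finiteness (the comparison of diagonal and fold maps being delicate when $n \ge p$), and, dually, the input in the converse that every simple functor is polynomial. Both rest on the structure theory of $\f$ over the prime field --- in particular the collapse of the Frobenius twists --- which is precisely where the hypothesis on $\fieldp$ enters, and which I would import from the cited structural results of \cite{KI}.
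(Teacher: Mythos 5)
The paper offers no argument for this statement: it is imported wholesale from Kuhn \cite{KI}, so there is no internal proof to compare yours against. Your sketch follows the standard route through the structure theory of $\f$, and the converse direction (finite $\Rightarrow$ polynomial with finite-dimensional values) is fine as you set it up: simples are quotients of the finite-dimensional-valued standard projectives, they are subquotients of tensor powers (a genuine theorem of \cite{KI}, obtainable from the augmentation-ideal filtration of the standard injectives), and thickness of the polynomial property (Corollary \ref{cor:thick_poly}) finishes it. In the forward direction, however, the two steps you defer are exactly where the content lives, and one of them is asserted too casually. First, the finiteness of $V \mapsto V^{\otimes n}$ is not a harmless input: a functor can take finite-dimensional values and still have infinite length (see $\overline{I_{\fieldp}}$ in Example \ref{exam:Ibar}), so finite-dimensionality of values does no work here, and proving that $I^{\otimes n}$ has finite length is essentially the same problem you are reducing to. Second, ``the $n$-th cross-effect $\cre_n F$, a functor of $n$ variables that is additive in each'' is only true for the top cross-effect $n=d$; for $n<d$ the functor $\cre_n F$ is not multiadditive. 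What you need is the $n$-th cross-effect of the $n$-th homogeneous layer $p_nF/p_{n-1}F$ (multiadditive because its $(n+1)$-st cross-effect vanishes), together with the fact that the kernel of the diagonal comparison $H \to \cre_n H\circ\delta$ is $p_{n-1}H$ --- the point you rightly flag as delicate in characteristic $p$.

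There is a shortcut that disposes of the forward implication without cross-effects, without classifying additive functors, and without the prime-field hypothesis (which, contrary to your remark, is not essential: over $\field_q$ the additive simples are Frobenius twists of the identity, still finite). Namely: if $G$ is polynomial of degree $\le d$ and $G(\fieldp^d)=0$, then $G=0$. Indeed $G(\fieldp^k)$ is a retract of $G(\fieldp^d)$ for $k\le d$, the difference functor $\Delta G$ (defined by $G(V\oplus\fieldp)\cong G(V)\oplus\Delta G(V)$) has degree $\le d-1$ and satisfies $\Delta G(\fieldp^{d-1})\subset G(\fieldp^d)=0$, so $\Delta G=0$ by induction on $d$, whence $G(V)\cong G(0)=0$. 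Applying this to $F_2/F_1$ for nested subfunctors $F_1\subset F_2$ of a degree-$d$ polynomial $F$ shows that $F_1\mapsto F_1(\fieldp^d)$ is a strictly monotone map of subobject lattices, so $F$ has length at most $\dim F(\fieldp^d)$. I would recommend replacing your homogeneous-layer d\'evissage by this argument; it is almost certainly the one intended by the citation.
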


\begin{exam}
 The $n$th symmetric power functor $S^n$ and the $n$th exterior power functor 
$\Lambda^n$ are both 
 polynomial functors in $\f$ of degree $n$.
\end{exam}

\begin{rem}
 The polynomial degree of a functor $F \in \ob \f$ that takes 
finite-dimensional 
values can be characterized in terms of 
 its growth function (see Proposition \ref{prop:poly_degree_growth}). 
\end{rem}

The inclusion of the full subcategory of functors of $\f$ of 
Eilenberg-MacLane polynomial degree at most $n \in \nat$  
admits a left adjoint 
$q_n$ (which is considered as a functor $q_n : \f \rightarrow \f$), so that the 
adjunction unit 
 $
 F \twoheadrightarrow q_n F
 $ 
is the universal map to a polynomial functor of degree at most $n$. (See 
\cite[Section 4.2]{K_krull} and the references therein.)

\subsection{Unstable algebras}
\label{subsect:unstalg_prelim}

One motivation for considering profinite presheaves  comes from the study of 
the 
category $\unstalg$ of unstable algebras over the mod $p$ Steenrod algebra 
$\cala$ \cite{HLS}. 

As usual, the category of unstable modules over $\cala$ is denoted $\unst$ and 
the Steenrod-Epstein (or Massey-Peterson) enveloping algebra 
functor by $U : \unst \rightarrow \unstalg$. The reader is referred to  
\cite{S} for the basic theory of unstable modules and algebras, together with 
 the localization of $\unst$ away from $\nil$, the subcategory of nilpotent 
unstable modules, and the associated localization functor 
 $\unst \rightarrow \unst / \nil$. The theory and applications of this 
nillocalization have been developed in \cite[Part 
I]{HLS}, \cite{KI} and in subsequent work by many authors.

In \cite[Part II]{HLS}, the analogous (non-abelian) theory for unstable 
algebras was developed, leading to the construction of the localized category 
$\unstalg / \nil$. Recall the  terminology introduced by Quillen: 

\begin{defn}\cite[Part II]{HLS}
 A morphism $\phi : K \rightarrow L$ in $\unstalg$ is
\begin{enumerate}
 \item 
an $F$-monomorphism if, $\forall x \in \ker \phi$, $\exists n \in \nat $ such 
that $x^n=0$;
\item 
an $F$-epimorphism  if, $\forall y \in L$, $\exists m \in \nat $ such that 
$y^{p^m} \in 
\mathrm{image}\  \phi$;
\item 
an $F$-isomorphism if $\phi$ satisfies both the above.
\end{enumerate}
\end{defn}

The localization $\unstalg \rightarrow \unstalg / \nil$ is universal amongst 
functors $\unstalg \rightarrow \mathscr{C}$ which send $F$-isomorphisms to 
isomorphisms of $\mathscr{C}$.

\begin{defn}
\label{def:A_fg}
An unstable algebra $K \in \ob \unstalg$ is $\cala$-finitely generated if there exists a finitely generated unstable module $M$ and a morphism of unstable modules $M \rightarrow K$ such that the 
 induced map of unstable algebras $UM \rightarrow K$  is surjective.
\end{defn}

\begin{nota}
 Let $\unstalgc$ denote the full subcategory of connected unstable algebras 
($K$ such that $K^0= \fieldp$).
\end{nota}

\begin{rem}
\label{rem:cala_fg_conn}
A connected unstable algebra $K \in \ob  \unstalgc$ is canonically augmented, hence the 
augmentation ideal $\overline{K}$ and the module of indecomposables, $QK:= 
\overline{K}/ \overline{K}^2$, are defined.

In this case, $K$ is $\cala$-finitely generated if and only if  $QK$ is a finitely generated $\cala$-module.
\end{rem}

\begin{rem}
\label{rem:CCS}
The condition that the cohomology of a space is $\cala$-finitely 
generated imposes strong conditions, notably when working with $H$-spaces;    see \cite[Theorem 7.3]{CCS} for example.
\end{rem}

\subsection{From unstable algebras to presheaves}

The following is clear:

\begin{prop}
\label{prop:colimit_Afg}
Every unstable algebra $K$ is the colimit of its $\cala$-finitely generated sub 
unstable algebras.
\end{prop}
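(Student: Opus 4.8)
The plan is to exhibit the $\cala$-finitely generated sub unstable algebras of $K$ as a filtered diagram whose colimit, formed in $\unstalg$, recovers $K$. Write $\Lambda$ for the poset of those sub unstable algebras of $K$ that are $\cala$-finitely generated, ordered by inclusion, with the evident cocone $\{ L \hookrightarrow K \}_{L \in \Lambda}$.

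First I would check that $\Lambda$ is filtered. Given $K_1, K_2 \in \Lambda$, Definition \ref{def:A_fg} supplies finitely generated unstable modules $M_i$ with $U M_i \twoheadrightarrow K_i$; replacing $M_i$ by the image of the composite $M_i \to U M_i \to K_i$, one may assume $M_i \subseteq K_i$ is a finitely generated submodule generating $K_i$ as an algebra. The sum $M_1 + M_2 \subseteq K$ is then a finitely generated unstable module, and the image of the induced map $U(M_1 + M_2) \to K$ is precisely the sub unstable algebra $\langle K_1, K_2 \rangle$ generated by $K_1$ and $K_2$ (it contains each $M_i$, hence each $K_i$, and is contained in any subalgebra containing $K_1 \cup K_2$). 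By Definition \ref{def:A_fg} this join is $\cala$-finitely generated and dominates both $K_i$, so $\Lambda$ is directed.

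Next I would verify that every element of $K$ lies in some member of $\Lambda$. For $x \in K$, the cyclic unstable submodule generated by $x$ is finitely generated, and the image of $U \langle x \rangle \to K$ is the sub unstable algebra generated by $x$; by Definition \ref{def:A_fg} this is $\cala$-finitely generated and contains $x$. Hence $\bigcup_{L \in \Lambda} L = K$. Finally, since $\Lambda$ is filtered and the forgetful functor from $\unstalg$ to graded $\fieldp$-vector spaces creates filtered colimits, the colimit of the diagram $\Lambda$ is computed as the union $\bigcup_{L \in \Lambda} L$, and the comparison morphism to $K$ is therefore an isomorphism. The only point requiring any care is the directedness, which rests on the closure of finitely generated unstable modules under finite sums together with the identification of the join with the image of $U(M_1 + M_2)$; both are routine consequences of the standard theory of \cite{S}, so the statement is indeed clear.
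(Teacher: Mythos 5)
Your argument is correct: the directedness of the poset of $\cala$-finitely generated sub unstable algebras via joins of finitely generated generating submodules, the observation that every element lies in such a subalgebra, and the computation of the filtered colimit as a union are exactly the routine verifications the author has in mind in labelling this proposition as clear (the paper supplies no proof). Nothing further is needed.
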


\begin{defn}
Let 
\begin{enumerate}
 \item 
 $ \g : \unstalg \op \rightarrow \gps 
$ be the functor defined by $\g K (V) := \hom_{\unstalg} (K, H^* (BV))$, where 
$H^* (BV)$ denotes the mod $p$ group cohomology of $V$;
\item 
$
  \g : (\unstalgc) \op \rightarrow \gpsc
$ denote the restriction to connected objects. 
\end{enumerate}
Here the profinite structure  arises from Proposition \ref{prop:colimit_Afg}.
\end{defn}

The following is a fundamental fact, following from the analogous result for 
$\unst$ (as in \cite[Part I]{HLS} and \cite{KI}), together with Lannes' 
linearization principle. 

\begin{prop}
 \cite{HLS}
 \label{prop:g_factors_nil}
 The functor $\g$ factors naturally 
  $$
  \g : (\unstalg / \nil) \op \rightarrow \gps.
  $$
\end{prop}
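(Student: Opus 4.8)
The plan is to exploit the universal property of the localization $\unstalg \to \unstalg/\nil$ recalled above: since this functor is universal amongst those sending $F$-isomorphisms to isomorphisms, it suffices to prove that $\g$ inverts $F$-isomorphisms. So let $\phi : K \to L$ be an $F$-isomorphism. Because isomorphisms in $\gps$ are detected on underlying sets evaluated at each $V \in \fdvs$ (a continuous bijection of profinite sets being an isomorphism), the factorization reduces to showing that
\[
 \phi^* : \hom_{\unstalg}(L, H^*(BV)) \longrightarrow \hom_{\unstalg}(K, H^*(BV))
\]
is a bijection for every $V$; the compatibility with the profinite structures then follows formally from Proposition \ref{prop:colimit_Afg} and naturality.

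First I would dispose of the part of this bijection that is purely algebraic. Using that $\phi$ is an $F$-epimorphism, given $y \in L$ choose $m$ with $y^{p^m} = \phi(x)$; then for $f, g : L \to H^*(BV)$ with $f\phi = g\phi$ one obtains $f(y)^{p^m} = g(y)^{p^m}$. As the even-degree part of $H^*(BV)$ is a polynomial algebra, on which the Frobenius is injective, this forces $f$ and $g$ to agree on the reduced part, and a parallel argument (together with the $F$-monomorphism condition, which ensures $f$ kills the nilpotent kernel) yields the reduced part of surjectivity. The precise point at which this elementary argument is insufficient is the odd-degree, exterior part of $H^*(BV)$ when $p$ is odd: there $p$-th powers vanish, so the power map carries no information, and the unstable $\cala$-module structure must be brought in.

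To control this I would appeal to the linear theory. The analogous statement for unstable modules --- that $M \mapsto \hom_{\unst}(M, H^*(BV))$ inverts nilpotent isomorphisms, hence factors through $\unst/\nil$ --- is available from \cite[Part I]{HLS} and \cite{KI}. Lannes' linearization principle then bridges the two settings: a morphism of unstable algebras into $H^*(BV)$ is governed by its underlying morphism of unstable modules together with the action of $\cala$ (notably the Bockstein), so that the exterior generators are determined by the polynomial ones. Transporting the module-level bijection through this principle upgrades the partial result of the previous paragraph to a genuine bijection $\phi^*$, establishing the desired factorization through $(\unstalg/\nil)\op$.

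I expect the linearization step to be the main obstacle. Concretely, one must verify that an $F$-isomorphism of unstable algebras induces, on the linear invariants into which $\hom_{\unstalg}(-, H^*(BV))$ embeds, a nilpotent isomorphism of unstable modules, and that this correspondence is compatible with the Steenrod operations relating odd and even degrees. Once this compatibility is secured, the odd-degree ambiguity left open by the Frobenius argument is resolved by the module result of \cite[Part I]{HLS} and \cite{KI}, and naturality in $V$ is inherited from the naturality of $T_V$ and of all the constructions involved.
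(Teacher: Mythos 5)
You take the same route as the paper, which in fact offers no proof of its own: the Proposition is quoted from \cite{HLS}, and the sentence preceding it indicates precisely the two ingredients your final paragraphs invoke, namely the analogous nil-localization result for $\unst$ (from \cite[Part I]{HLS} and \cite{KI}) together with Lannes' linearization principle; your reduction, via the universal property of $\unstalg \rightarrow \unstalg/\nil$, to showing that $\g$ inverts $F$-isomorphisms is the intended reading of that sentence. So at the level of architecture the two arguments agree, and both ultimately delegate the substance to \cite{HLS}.

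The problems are in your second paragraph, whose claims would fail. For $p$ odd and $\dim V \geq 2$ the even-degree part of $H^*(BV)$ is \emph{not} a polynomial algebra: it contains even-degree nilpotents such as the product $x_1x_2$ of two exterior generators, on which the Frobenius vanishes. All that $f(y)^{p^m} = g(y)^{p^m}$ yields on even-degree classes is $(f(y)-g(y))^{p^m}=0$, i.e.\ agreement modulo the nilradical; on odd-degree classes $y^{p^m}=0$ automatically (for $p$ odd), so the $F$-epimorphism condition carries no information there at all. Worse, the ``parallel argument'' for surjectivity does not exist even at $p=2$: the Frobenius on $\field_2[t_1,\dots,t_n]$ is injective but far from surjective, so from $y^{2^m}=\phi(x)$ one obtains no candidate for $f(y)$ --- deciding whether $h(x)$ admits a compatible $2^m$-th root is exactly the point where the injectivity and nil-closedness of $H^*(BV)$ in $\unst$, i.e.\ the machinery of \cite{HLS} and Lannes, must enter. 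Consequently the difficulty is not confined to the odd-degree exterior part at odd primes, as you assert: the module-level result and the linearization principle carry the whole weight of the surjectivity statement at every prime, and that step is the one your proposal (like the paper itself, to be fair) leaves to the literature rather than carrying out.
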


\begin{nota}
 For $Y$ a topological space, let $\gtop Y \in \ob \prshv$ denote the presheaf 
$
  \gtop Y (V):= [BV, Y].
 $
\end{nota}

The presheaves $\g$ and $\gtop$ are intimately related (compare \cite{LS}):

\begin{thm}
\label{thm:Lannes_g_gtop}
\cite{La}
 For $Y$ a topological space, mod $p$ cohomology induces a morphism of 
presheaves 
 $
  \gtop Y \rightarrow 
  \g H^* Y
 $
that is an isomorphism if $Y$ is connected, nilpotent, $\pi_1 Y$ is finite and 
$H^* (Y)$ is of finite type.
\end{thm}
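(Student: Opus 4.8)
First I would construct the map directly: a continuous map $f \colon BV \to Y$ induces a morphism of unstable algebras $f^* \colon H^* Y \to H^* (BV)$, and homotopic maps induce equal maps on cohomology, so $[f] \mapsto f^*$ defines a function $\gtop Y (V) = [BV, Y] \to \hom_\unstalg (H^* Y, H^* (BV)) = \g H^* Y (V)$. Naturality in $V$ is immediate from the functoriality of $V \mapsto BV$ and of $H^*(-)$, so this assembles into a morphism of presheaves $\gtop Y \to \g H^* Y$. What remains is to prove bijectivity at each $V$ under the stated hypotheses.

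Next I would rewrite the target using Lannes' $T$-functor $T_V$, the left adjoint to $- \otimes H^* (BV)$ on $\unstalg$; evaluating the adjunction at $\field_p$ gives a natural bijection
\[
 \hom_\unstalg (H^* Y, H^* (BV)) \cong \hom_\unstalg (T_V H^* Y, \field_p).
\]
The hypotheses — $Y$ connected and nilpotent, $\pi_1 Y$ finite, $H^* Y$ of finite type — are exactly those guaranteeing the convergence of Lannes' theorem, which supplies a natural isomorphism of unstable algebras $T_V H^* Y \cong H^* (\mathrm{map}(BV, Y))$ induced by the evaluation $BV \times \mathrm{map}(BV, Y) \to Y$. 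Combining the two, the target becomes $\hom_\unstalg (H^* (\mathrm{map}(BV,Y)), \field_p)$.

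It then suffices to identify this last $\hom$-set with $\pi_0 \mathrm{map}(BV, Y) = [BV, Y]$. Since $\field_p$ is concentrated in degree zero, a morphism of unstable algebras to $\field_p$ factors through degree $0$ and amounts to an $\field_p$-algebra homomorphism $H^0 (\mathrm{map}(BV,Y)) \to \field_p$; as $\mathrm{map}(BV, Y)$ decomposes into connected components indexed by $[BV, Y]$, one has $H^0 (\mathrm{map}(BV,Y)) \cong \prod_{[f]} \field_p$, whose $\field_p$-points, respecting the profinite/finite-type structure of Proposition \ref{prop:colimit_Afg}, correspond bijectively to the components. This yields the desired bijection $\g H^* Y (V) \cong [BV, Y]$, compatibly with the profinite structures.

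The main obstacle — and the sole place where the hypotheses are essential — is Lannes' isomorphism $T_V H^* Y \cong H^* (\mathrm{map}(BV, Y))$, which I would take from \cite{La}: its proof rests on an Eilenberg--Moore/unstable-injectivity argument whose convergence forces the nilpotence, finite fundamental group, and finite-type conditions. The remaining work is bookkeeping: one checks that the composite of the adjunction isomorphism, Lannes' isomorphism, and the component identification inverts the explicit map $[f] \mapsto f^*$ (a component is sent to its augmentation, whose adjoint is exactly $f^*$), and one confirms naturality in $V$ throughout, so that the whole chain is an isomorphism of presheaves and not merely a pointwise bijection.
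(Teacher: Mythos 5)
This statement carries no proof in the paper: it is imported wholesale from Lannes \cite{La}, so there is no internal argument to compare against. Your outline is the standard derivation, and it is essentially the one in \cite{La} itself: construct $[f]\mapsto f^*$, transpose across the adjunction between $T_V$ and $-\otimes H^*(BV)$, invoke the mapping-space theorem $T_V H^* Y \cong H^*(\mathrm{map}(BV,Y))$, and identify unstable-algebra maps to $\field_p$ with connected components. Since the entire analytic content --- the convergence argument that needs $Y$ connected, nilpotent, $\pi_1 Y$ finite and $H^*Y$ of finite type --- is exactly what you, like the paper, delegate to \cite{La}, your proof is a correct reduction of the cited theorem to another form of the cited theorem, which is acceptable here. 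The one step you should tighten is the last one: for a space $Z$ with infinitely many components, $\field_p$-algebra maps $H^0 Z \cong \field_p^{\pi_0 Z} \to \field_p$ correspond to ultrafilters on $\pi_0 Z$, not to components, so the identification $\hom_{\unstalg}(H^*Z,\field_p)\cong \pi_0 Z$ is not automatic and your appeal to ``the profinite/finite-type structure'' is doing real work that is left implicit. It is rescued because Lannes' theorem also yields that $T_V H^* Y$ is of finite type under these hypotheses, so $H^0(\mathrm{map}(BV,Y))$ is finite-dimensional and $[BV,Y]$ is finite, eliminating the ultrafilter issue; alternatively one works with $\g H^* Y(V)$ as the profinite limit over $\cala$-finitely generated subalgebras via Proposition \ref{prop:colimit_Afg} and checks that the comparison map is continuous. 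Either way, make that finiteness (or continuity) explicit; as written it is the only genuine seam in an otherwise standard argument.
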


\subsection{From presheaves to unstable algebras}

For simplicity of presentation, suppose that $p=2$ in this section.  The odd 
primary case is treated by passage to objects concentrated in even 
degrees 
(cf. \cite{HLS}, for example).

\begin{defn}
 Let $\kappa : (\gps) \op \rightarrow \unstalg$, the associated unstable 
algebra 
functor, 
 be defined by 
 \[
  \kappa : X \mapsto \hom_{\gps} (X, S^*)
 \]
where the commutative $\fieldt$-algebra structure of $\kappa X$ is induced by 
the  commutative algebra structure of $S^* $ and 
 Steenrod operations act via  $\hom_\f (S^* , S^*)$
 (cf. \cite{KI}).
\end{defn}

\begin{exam}
\label{exam:kappa_Sn} For $n \in \nat$, $\kappa S^n \cong U F(n)$, the free 
unstable algebra on a generator of degree $n$ (see \cite{K_comp}), thus $\kappa S^n \cong H^* (K (\fieldt,n); \fieldt)$. 
\end{exam}

To stress the relationship between presheaves and unstable algebras, recall the
following part of \cite[Theorem II.1.5]{HLS}:

\begin{prop}
\label{prop:kappa_g}
 For $K \in \ob \unstalg$, there is a natural transformation
 $
  K 
  \rightarrow 
  \kappa \g K
 $
that is an $F$-isomorphism.
\end{prop}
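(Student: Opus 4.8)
The plan is to realize the natural transformation as the unit of a dual adjunction determined by the object $S^*$, and then to establish the $F$-isomorphism property by reduction to the $\cala$-finitely generated case and ultimately to the known module-level statement.

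First I would observe that $S^*$ plays the role of a dualizing (``schizophrenic'') object: it is at once a presheaf of unstable algebras, $V \mapsto H^*(BV)$, so that $\g K$ is the presheaf $V \mapsto \hom_\unstalg (K, S^*(V))$, and it is a presheaf of profinite sets (each graded piece $S^n(V)$ being finite) used to form $\kappa X = \hom_\gps (X, S^*)$. Currying, in the two possible orders, the two-variable functor assigning to $(K,X)$ the set of families of unstable-algebra maps $K \to S^*(V)$ indexed by $x \in X(V)$ and natural in $V$, yields a natural isomorphism
\[
 \hom_\gps (X, \g K) \cong \hom_\unstalg (K, \kappa X),
\]
that is, a contravariant adjunction between $\g$ and $\kappa$. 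The required natural transformation $\eta_K : K \to \kappa \g K$ is then the unit, corresponding to $\mathrm{id}_{\g K}$; concretely $\eta_K$ sends $k \in K$ to the natural transformation $\g K \to S^*$ given by evaluation, $(\phi : K \to H^*(BV)) \mapsto \phi(k)$. Functoriality and naturality are immediate from this description.

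Next I would reduce the $F$-isomorphism statement to the $\cala$-finitely generated case. By Proposition \ref{prop:colimit_Afg}, $K$ is the filtered colimit of its $\cala$-finitely generated sub unstable algebras, each inclusion being a monomorphism; since $\g$ is contravariant it converts this colimit into the inverse limit defining the profinite structure of $\g K$, and I would check that $\kappa \g$ commutes with the colimit up to $F$-isomorphism, using that the class of $F$-isomorphisms is closed under filtered colimits. This licenses the assumption that $K$ is a quotient of some $UM$ with $M$ finitely generated. That such reductions are legitimate rests on Proposition \ref{prop:g_factors_nil}: $\kappa \g K$ depends only on the $F$-isomorphism class of $K$. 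The engine of the argument is then the corresponding statement for unstable modules due to Lannes and Schwartz (see \cite[Part I]{HLS} and \cite{KI}), namely that the analogous evaluation map $M \to \hom_\gps (\hom_\unst (M, S^*), S^*)$ is a nil-isomorphism, a consequence of the injectivity of the $H^*(BV)$ in $\unst$ and the exactness of Lannes' $T$-functor. To transfer this to algebras I would invoke Example \ref{exam:kappa_Sn}, $\kappa S^n \cong U F(n)$, together with the adjunction between $U$ and the forgetful functor $\unstalg \to \unst$, so that both parts of the $F$-isomorphism claim for $\eta_K$ can be read off from the module-level result applied to a finite set of $\cala$-module generators of $K$. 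The $F$-monomorphism then amounts to the assertion that the $H^*(BV)$ jointly detect the non-nilpotent elements, i.e. that $\ker \eta_K$ is the nilradical; this is the algebra-level shadow of the Lannes--Schwartz characterization of $\nil$, and is comparatively formal once the module detection result is in hand.

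The hard part will be the $F$-epimorphism: every element of $\kappa \g K = \hom_\gps (\g K, S^*)$, that is every natural transformation $\g K \to S^*$, must have a $p$-th power lying in the image of $\eta_K$. This is where the specific structure of $S^* = H^*(BV)$ is indispensable --- its generation in degree one and its Frobenius, which produce the required $p$-th powers --- and where the finiteness coming from $\cala$-finite generation, via the computation of Lannes' $T$-functor, is used to show that such natural transformations are, up to Frobenius, represented by genuine elements of $K$.
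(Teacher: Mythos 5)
The paper offers no proof of this proposition at all: it is recalled verbatim as part of \cite[Theorem II.1.5]{HLS} (note the bare citation attached to the statement and the preceding phrase ``recall the following part of''), so the paper's ``proof'' is the citation. Your description of where the map comes from is correct and does match the HLS framework: $K \rightarrow \kappa \g K$ is the unit of the contravariant adjunction $\hom_{\gps}(X, \g K) \cong \hom_{\unstalg}(K, \kappa X)$ determined by the dualizing object $S^*$, given explicitly by evaluation.

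As a proof of the $F$-isomorphism property, however, your proposal has genuine gaps at exactly the points where the content lies. First, the $F$-epimorphism half is never argued: you defer it as ``the hard part'' and say only that the Frobenius on $H^*(BV)$ and the finiteness coming from Lannes' $T$-functor ``are used to show'' the required representability. That is a statement of what must be proved, not a proof; in \cite{HLS} this is the substantive step, resting on Lannes' linearization principle identifying $(T_V K)^0$, up to $F$-isomorphism, with the algebra of continuous functions on $\g K(V)$. Second, the $F$-monomorphism half is mischaracterized as ``comparatively formal once the module detection result is in hand.'' The module-level theory detects non-nilpotent elements by \emph{module} maps into injectives of the form $H^*(BV)\otimes J$, whereas the proposition requires detection by \emph{algebra} maps into $H^*(BV)$ itself; bridging module maps to algebra maps is again the linearization principle and is not formal. (A smaller imprecision: the module-level unit you invoke should be formed in the linear functor category $\f$, or its profinite variant, not in $\gps$.) Finally, the reduction to the $\cala$-finitely generated case is only asserted; converting $\kappa$ of the inverse limit $\g K = \lim_{\leftarrow} \g K_\alpha$ into the colimit of the $\kappa \g K_\alpha$ uses the compactness of the profinite structure, which you do not verify. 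In short, you have named the correct ingredients, but each of the three steps that actually needs verification is missing, so the argument does not stand on its own; the paper's treatment --- citing \cite[Theorem II.1.5]{HLS} --- is the honest alternative.
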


The following  is recorded for later use:

\begin{prop}
 \label{prop:finite-type_kappa}
 Let $X \in \prshv$ take values in finite sets. Then the unstable algebra 
$\kappa X$ has finite type.
\end{prop}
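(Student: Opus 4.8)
The plan is to compute the graded pieces of $\kappa X$ one degree at a time and to show that each is a finite-dimensional $\fieldp$-vector space. Since $X$ takes values in finite sets, it may be regarded as an object of $\gps$ with discrete finite values, and for such an $X$ any map of presheaves into the finite-valued presheaf $S^d$ is automatically continuous; hence $\hom_{\gps}(X, S^d) = \hom_{\prshv}(X, S^d)$. The grading of $\kappa X = \hom_{\gps}(X, S^*)$ is induced by the internal grading $S^* = \bigoplus_{d} S^d$, so that the degree-$d$ component is $(\kappa X)^d \cong \hom_{\prshv}(X, S^d)$, where the vector space structure comes from the target $S^d \in \ob \f$. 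It therefore suffices to prove that $\hom_{\prshv}(X, S^d)$ is finite-dimensional for every $d \in \nat$.

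First I would linearize. By the adjunction of Lemma~\ref{lem:adj_F_linearization},
\[
 \hom_{\prshv}(X, S^d) \cong \hom_{\f}(\fieldp[X], S^d),
\]
where on the right $S^d$ is viewed as an object of $\f$. The gain in passing to $\f$ is that $\fieldp[X]$ takes finite-dimensional values: by construction $\fieldp[X](V) = \fieldp[X(V)]$, and $X(V)$ is finite by hypothesis. Next I would truncate polynomially. The symmetric power $S^d$ is polynomial of degree $d$, so the universal property of the truncation $q_d$ gives that the unit $\fieldp[X] \twoheadrightarrow q_d \fieldp[X]$ induces an isomorphism $\hom_{\f}(q_d \fieldp[X], S^d) \cong \hom_{\f}(\fieldp[X], S^d)$. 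Because this unit is an epimorphism, $q_d \fieldp[X]$ is a quotient of $\fieldp[X]$ and so still takes finite-dimensional values; being moreover polynomial of degree at most $d$, it is a \emph{finite} functor by Proposition~\ref{prop:finite_polynomial}.

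This last point is the crux of the argument, and the step I expect to be the main obstacle to state cleanly: polynomial truncation replaces the possibly enormous functor $\fieldp[X]$ by a finite one without changing the relevant $\hom$. The reduction is essential, since $\hom_{\f}(-, S^d)$ out of an arbitrary (e.g. infinitely generated) functor need not be finite-dimensional. Once finiteness of $q_d \fieldp[X]$ is in hand, the conclusion follows formally: a finite functor has finite length, hence is finitely generated, so there is an epimorphism $\bigoplus_{i=1}^k P_{W_i} \twoheadrightarrow q_d \fieldp[X]$ from a finite sum of the standard projective generators $P_W = \fieldp[\hom(-, W)]$. Applying the left-exact contravariant functor $\hom_{\f}(-, S^d)$ and invoking Yoneda in the form $\hom_{\f}(P_W, S^d) \cong S^d(W)$ yields an inclusion
\[
 \hom_{\f}(q_d \fieldp[X], S^d) \hookrightarrow \bigoplus_{i=1}^k S^d(W_i),
\]
whose right-hand side is finite-dimensional, each $W_i$ being finite-dimensional.

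Chaining the identifications then gives that $(\kappa X)^d$ is finite-dimensional for every $d$, which is exactly the assertion that $\kappa X$ has finite type. The only routine verifications left implicit here are the identification of the graded piece $(\kappa X)^d$ with $\hom(X, S^d)$ and the fact that $q_d$ preserves finite-dimensionality of values (immediate from the surjectivity of the unit); both are straightforward given the results already established.
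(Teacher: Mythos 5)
Your argument is correct and follows essentially the same route as the paper: identify $(\kappa X)^d$ with $\hom_{\prshv}(X,S^d)$, linearize to $\hom_{\f}(\fieldp[X],S^d)$, pass through the polynomial truncation $q_d\fieldp[X]$, and invoke Proposition~\ref{prop:finite_polynomial} to see this is a finite functor. The only difference is cosmetic: where the paper simply notes that a $\hom$ between two finite functors is finite-dimensional, you justify it explicitly via a finite projective cover and Yoneda, which is a correct expansion of the same step.
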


\begin{proof}
In degree $n$, $(\kappa X)^n = \hom _{\prshv} (X, S^n) \cong \hom_{\f} (q_n 
\fieldt [X], S^n)$.
  Now, by construction, $q_n \fieldt [X]$ is a polynomial functor which takes 
finite-dimensional values, hence is finite, by Proposition 
\ref{prop:finite_polynomial}; likewise, $S^n$ is finite. It 
follows that $\hom_{\f} (q_n \fieldt[X], S^n)$ is a finite-dimensional 
  vector space, whence the result.
\end{proof}

\subsection{Finite transcendence degree and Noetherian unstable algebras}
\label{subsect:HLS_trans_deg}

Recall (see \cite[Section II.2]{HLS}) that the transcendence degree of an 
unstable algebra $K$ is the 
transcendence degree of its underlying graded algebra (namely the supremum of 
the cardinalities of finite subsets of algebraically independent homogeneous 
elements of $K$). Transcendence degree is invariant under 
$F$-isomorphism.

\begin{defn}
 For $d \in \nat$, let $\unstalg_d$ be the full subcategory of unstable 
algebras of transcendence degree at most $d$ and  $\unstalg_d / \nil$  
the corresponding full subcategory of $\unstalg/ \nil$. 
\end{defn}

\begin{nota}
 For $d \in \nat$, let $\pfset\dash\End(\field^d_p)$ denote the category of 
profinite 
right $\End(\fieldp^d)$-sets.
\end{nota}

\begin{thm}
\label{thm:HLS_trans_deg}
 \cite[Theorems II.2.7, II.2.8]{HLS}
 For $d \in \nat$,  
 the functor $\g$ induces an equivalence of categories
  $
  (\unstalg_d/\nil) \op \cong \pfset\dash\End(\fieldp^d)
 $ 
with inverse $\kappa_d$ induced by $\kappa$.
 \end{thm}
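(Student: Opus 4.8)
The plan is to deduce this equivalence from the $F$-isomorphism $K \to \kappa \g K$ of Proposition \ref{prop:kappa_g}, which already shows that $\g$ and $\kappa$ are mutually inverse \emph{after} passing to $\unstalg/\nil$, combined with a precise analysis of what the finite transcendence degree condition means on the presheaf side. I would first recall the structure theory underlying \cite[Part II]{HLS}: for a connected algebraically closed (or, here, prime) field, the functor $\g$ sends an unstable algebra $K$ to its ``$\field_p$-points'', and Lannes' $T$-functor computes $\g(K)(V) = \hom_\unstalg(K, H^*(BV))$ as the set of morphisms, with the profinite topology coming from writing $K$ as the colimit of its $\cala$-finitely generated subalgebras (Proposition \ref{prop:colimit_Afg}). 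The right $\End(\field_p^d)$-action arises functorially: an endomorphism $\field_p^d \to \field_p^d$ induces a map $H^*(B\field_p^d) \to H^*(B\field_p^d)$, hence by precomposition an action on $\g K(\field_p^d)$.

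The key step is to show that when $K$ has transcendence degree at most $d$, the entire presheaf $\g K$ is determined by its value at $\field_p^d$ together with this $\End(\field_p^d)$-action. The mechanism is that for transcendence degree $\le d$, every morphism $K \to H^*(BV)$ factors, up to $F$-isomorphism, through $H^*(B\field_p^d)$: a point of $\g K(V)$ corresponds to a map landing in the cohomology of an elementary abelian group, and the finite transcendence degree forces the image to have Krull dimension at most $d$, so after quotienting by the relevant ideal the map is detected on a rank-$d$ subspace. Concretely, I would establish a natural bijection $\g K(V) \cong \hom_{\End(\field_p^d)}(\hom(V, \field_p^d), \g K(\field_p^d))$ — that is, $\g K$ is the right Kan extension (the ``coinduction'') of its value at $\field_p^d$ along the embedding determined by $\field_p^d$. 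This recovers $\g K(V)$ from the $\End(\field_p^d)$-set $\g K(\field_p^d)$ and gives the functor in the stated direction.

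For the inverse, I would use $\kappa_d$, the restriction of $\kappa$: given a profinite $\End(\field_p^d)$-set $M$, one forms the presheaf $V \mapsto \hom_{\End(\field_p^d)}(\hom(V,\field_p^d), M)$, applies $\kappa$, and checks that the resulting unstable algebra has transcendence degree at most $d$ (using Proposition \ref{prop:finite-type_kappa} and the polynomial-functor finiteness of Proposition \ref{prop:finite_polynomial} to control growth). The two composites are then identified: $\g \kappa_d M \cong M$ follows from Example \ref{exam:kappa_Sn} and the adjunction $\field_p[-] \dashv (\text{forgetful})$ of Lemma \ref{lem:adj_F_linearization}, computing $\hom_\unstalg(\kappa X, H^*(BV)) = \hom_{\gps}(\g H^*(BV), X)$ via Yoneda; and $\kappa_d \g K \cong K$ in $\unstalg_d/\nil$ is exactly the $F$-isomorphism of Proposition \ref{prop:kappa_g}, which becomes an isomorphism in the localized category.

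The main obstacle I anticipate is the coinduction claim of the second paragraph: proving that finite transcendence degree rigidly forces $\g K$ to be the Kan extension of its rank-$d$ value, rather than merely bounding the ``size'' of $\g K(V)$. This is where the real content of \cite[Theorems II.2.7, II.2.8]{HLS} lies, and it ultimately rests on the structure of unstable algebras modulo nilpotents as modules over the cohomology of elementary abelian groups together with the finiteness of the set of associated ``points''; handling the profinite topology compatibly throughout (so that the bijections are homeomorphisms of profinite sets, not just bijections of underlying sets) is the delicate technical point that I would treat with care.
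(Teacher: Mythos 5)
The paper itself gives no proof of this statement: it is quoted verbatim from \cite[Theorems II.2.7, II.2.8]{HLS}, so there is no internal argument to compare yours against and I can only assess your outline on its own terms. The central step of your proposal — the reconstruction of $\g K$ from $\g K(\field_p^d)$ — uses the wrong Kan extension. You propose the \emph{coinduction} formula $\g K(V)\cong\hom_{\End(\field_p^d)}(\hom(V,\field_p^d),\g K(\field_p^d))$, but the correct reconstruction for $K$ of transcendence degree at most $d$ is the \emph{induction} (left Kan extension)
\[
\g K \;\cong\; \g K(\field_p^d)\times_{\End(\field_p^d)}\hom(-,\field_p^d),
\]
i.e.\ $\g K\cong X_{\g K(\field_p^d)}$ in the notation the paper introduces later; this is exactly what is invoked in the proof of Corollary \ref{cor:trans_deg_finite_presheaf}, and it amounts to the rank filtration of Proposition \ref{prop:rank_filtration} being exhausted at stage $d$ with injective evaluation map. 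The two formulas genuinely disagree: take $p=2$, $d=1$, $K=H^*(B\field_2)$, so that $\g K(V)=\hom(V,\field_2)=V^\sharp$ has $2^{\dim V}$ elements and $\g K(\field_2)=\field_2$ with its tautological right $\End(\field_2)$-action. The induction $\field_2\times_{\End(\field_2)}\hom(V,\field_2)$ recovers $V^\sharp$, whereas the set of $\End(\field_2)$-equivariant maps from $\hom(\field_2,V)\cong V$ to $\field_2$ is the set of all maps $V\to\field_2$ sending $0$ to $0$, of cardinality $2^{2^{\dim V}-1}$. (Your formula also fails to typecheck: $\hom(V,\field_p^d)$ carries a natural \emph{left} $\End(\field_p^d)$-action by postcomposition while $\g K(\field_p^d)$ is a right $\End(\field_p^d)$-set — which is precisely the signal that one should tensor over the monoid rather than take equivariant maps.) Indeed your own heuristic, that every point of $\g K(V)$ is detected on a rank-$d$ quotient of $V$, describes the induction picture and contradicts the formula you then write down.

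Once the induction formula is substituted, the remaining outline is reasonable in spirit: the unit being an $F$-isomorphism is Proposition \ref{prop:kappa_g}, and the counit $\g\kappa_d S\to S$ is a Yoneda computation as you indicate. But two further points still need care. First, both the surjectivity \emph{and} the injectivity of the evaluation map $\g K(\field_p^d)\times_{\End(\field_p^d)}\hom(-,\field_p^d)\to\g K$ constitute the Key Lemma \cite[Lemma II.2.1]{HLS}; you correctly flag this as the real content, but your sketch only addresses a factorization (surjectivity-type) statement. Second, your appeal to Proposition \ref{prop:finite-type_kappa} to control $\kappa_d S$ is not available for a general profinite $\End(\field_p^d)$-set $S$, since the induced presheaf need not take values in finite sets; bounding the transcendence degree of $\kappa_d S$ must instead use the fact that the induced presheaf is generated in rank $\leq d$.
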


Henn, Lannes and Schwartz \cite[Definition II.5.8]{HLS} also introduce 
the notion of a Noetherian $\End (\fieldp^d)$-set (necessarily finite) which 
allows them to give a characterization of 
 unstable algebras that are Noetherian up to nilpotents:
 
 \begin{thm}
 \label{thm:HLS_noetherian}
  \cite[Theorem II.7.1]{HLS}
  Let $d \in \nat$. 
  \begin{enumerate}
   \item 
   For $K$ a Noetherian unstable algebra, $\g_d K$ is a Noetherian $\End 
(\fieldp^d)$-set. 
   \item 
   If $S$ is a Noetherian $\End (\fieldp^d)$-set, then $\kappa_d S$ is a 
Noetherian unstable algebra of transcendence degree at most $d$. 
  \end{enumerate} 
 \end{thm}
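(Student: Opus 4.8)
The plan is to deduce both statements from the equivalence of categories of Theorem~\ref{thm:HLS_trans_deg}, which already identifies $(\unstalg_d/\nil)\op$ with the category of profinite right $\End(\fieldp^d)$-sets via $\g_d$ and its inverse $\kappa_d$; what remains is to check that the \emph{Noetherian} condition is carried back and forth across this equivalence. Throughout I would set $V := \fieldp^d$ and $E := \End(V)$, and recall that by Proposition~\ref{prop:kappa_g} the unit $K \to \kappa\g K$ is an $F$-isomorphism, so that everything in sight may be computed up to $F$-isomorphism and transported through the equivalence.

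For part (1), first note that a Noetherian unstable algebra $K$ is finitely generated as an $\fieldp$-algebra, hence has finite transcendence degree; choosing $d$ to be at least this degree, $K$ determines an object of $\unstalg_d/\nil$, and $\g_d K$ is its image under the equivalence. The real work is then to exhibit the defining finiteness of a Noetherian $E$-set for $\g_d K$. For this I would apply graded Noether normalization to produce a polynomial subalgebra $P = \fieldp[y_1,\dots,y_r]$ over which $K$ is module-finite (arranging $P$ to be closed under the Steenrod action up to $F$-isomorphism), and then transport this module structure through $\g$: module-finiteness of $K$ over $P$ should translate into the corresponding finiteness of $\g_d K$ relative to the well-understood $E$-set $\g_d P$. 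The content is therefore that the \emph{algebraic} notion of being module-finite over a polynomial subalgebra is exactly what the HLS definition of a Noetherian $E$-set encodes on the presheaf side.

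For part (2) I would argue in the reverse direction from the explicit form of $\kappa_d$. A Noetherian $E$-set $S$ is in particular finite, so Proposition~\ref{prop:finite-type_kappa} already gives that $\kappa_d S$ is of finite type, while the bound $\le d$ on transcendence degree is built into the target $\unstalg_d/\nil$ of the equivalence. The point is to upgrade finite type to finite generation. I would use that $\kappa$ turns coproducts of presheaves into products of unstable algebras (from Lemma~\ref{lem:prshv_prod_coprod} together with the definition $\kappa X = \hom_{\gps}(X, S^*)$), combined with the decomposition of a finite $E$-set into its finitely many indecomposable summands, to reduce to a single indecomposable $S$. Here one must invoke the full strength of the Noetherian hypothesis, not merely the finiteness of $S$: it is precisely this extra condition that forces each indecomposable summand to be of a form whose associated unstable algebra is finitely generated (for instance, $F$-isomorphic to a ring of invariants $(H^*BW)^G$ for a subspace $W \subseteq V$ and a finite group $G \le \gl(W)$, which is Noetherian by the classical finiteness theorem), rather than merely of finite type.

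The main obstacle is exactly this gap between finite type and finite generation, and its faithful translation across the equivalence. Being Noetherian is strictly stronger than being $\cala$-finitely generated up to nilpotents of finite transcendence degree (equivalently, than $\g_d K$ merely taking finite values), so one cannot get by with counting cardinalities: the interaction of the $E$-action with the rank stratification of $\g_d K$ must be controlled finely enough to detect the module-finite extension $P \subseteq K$ in part (1), and, conversely, to rule out those indecomposable $E$-sets whose associated algebras require infinitely many algebra generators in part (2). Pinning down this combinatorial condition and verifying it against the ring-theoretic Noetherian property is where the HLS notion of a Noetherian $\End(V)$-set does its work, and is the heart of the argument.
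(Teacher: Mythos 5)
This theorem is not proved in the paper at all: it is quoted verbatim from Henn--Lannes--Schwartz \cite[Theorem II.7.1]{HLS} as background, so there is no in-paper argument to compare yours against. Judged on its own terms, your proposal is a reasonable high-level plan (and part (2), via decomposition and reduction to rings of invariants $(H^*BW)^G$, is indeed in the spirit of the HLS analysis), but it contains a genuine gap: at no point do you state, or work with, the actual definition of a Noetherian $\End(\fieldp^d)$-set from \cite[Definition II.5.8]{HLS}. Both directions of your argument terminate exactly where the proof has to begin --- you say that module-finiteness of $K$ over a polynomial subalgebra ``should translate into'' the corresponding finiteness of $\g_d K$, and that the Noetherian hypothesis ``forces'' each indecomposable summand to give a finitely generated algebra, and you explicitly defer ``pinning down this combinatorial condition'' to the end. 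That combinatorial condition \emph{is} the theorem; without it neither implication can be checked.

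Concretely, two steps would fail as written. In part (1), $\g$ is not a module-theoretic functor, so ``transporting the $P$-module structure'' has no meaning until you say what finiteness of $\g_d K$ relative to $\g_d P$ is and prove it agrees with the HLS condition; moreover, producing a Steenrod-stable Noether normalization (even up to $F$-isomorphism) is itself a nontrivial input you assert rather than establish. In part (2), the reduction to indecomposable summands is fine, but the claim that a Noetherian indecomposable $\End(\fieldp^d)$-set must be (up to the relevant equivalence) of invariant-ring type is precisely the content of the HLS structure theory. The paper's own Example \ref{exam:grass} shows how delicate this is: $\grass_{\leq n}$ and $\grass_n$ are both finite presheaves induced from finite $\End(\fieldp^n)$-sets, yet the first is Noetherian while the second is not, and $\kappa(\grass_n) \cong \fieldp \oplus \omega_n D(n)$ is not even $\cala$-finitely generated for $n>1$. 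Any correct proof must isolate exactly the property separating these two examples; your sketch names that task but does not carry it out.
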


 In particular, this leads to the following:
 
 \begin{defn}
  \label{def:noetherian_mod_nil}
  An unstable algebra $K$ is  Noetherian up to nilpotents if it has finite 
transcendence degree and, for any $d \in \nat$,  $\g_d K$ is a Noetherian $\End 
(\fieldp^d)$-set. 
 \end{defn}

\section{Finite presheaves and coanalyticity}
\label{sect:defs}

This section introduces the notion of a finite presheaf. Its relevance 
for 
unstable algebras is explained in Section \ref{sect:unstable}.

\subsection{Definitions and first properties}

\begin{defn}
\label{def:finite}
  An object $X$ of $\prshv$ is finite if there exists a finite functor $F_X \in 
\ob \f$ and 
  a monomorphism 
  \[
   X \hookrightarrow F_X
  \]
in $\prshv$. The full subcategory  of finite presheaves is denoted 
$\gfin$.
\end{defn}

\begin{lem}
\label{lem:finite2}
 A presheaf $X$ of $\prshv$ is finite if and only if  $|X(0)| < \infty$ and, for each $x \in 
X(0)$, there exists a constant-free finite functor $F_x \in \f$ and a 
monomorphism 
 $
  X_x \hookrightarrow F_x. 
 $
 \end{lem}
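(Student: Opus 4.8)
The plan is to prove the two directions of the equivalence stated in Lemma \ref{lem:finite2}.

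For the forward direction, suppose $X \in \ob \prshv$ is finite, so there is a monomorphism $X \hookrightarrow F_X$ with $F_X \in \ob \f$ finite. First I would observe that a finite functor takes finite-dimensional (in particular finite) values, so $F_X(0)$ is finite; since $X \hookrightarrow F_X$ is a monomorphism, evaluating at $0$ gives an injection $X(0) \hookrightarrow F_X(0)$, whence $|X(0)| < \infty$. Next I would use the splitting of Lemma \ref{lem:split-reduced}, $F_X \cong F_X(0) \oplus \overline{F_X}$, noting that $\overline{F_X}$ is again finite (it is a subquotient, indeed a direct summand, of the finite functor $F_X$). For each $x \in X(0)$, the connected component $X_x \subset X$ is by definition the fibre over $x$, so the composite $X_x \hookrightarrow X \hookrightarrow F_X$ lands in the fibre over the image of $x$ in $F_X(0)$. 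Since $X_x$ is connected ($X_x(0)$ is a point), I would compose with the translation (subtraction of the constant element corresponding to the image of $x$) and project to $\overline{F_X}$; this produces a monomorphism $X_x \hookrightarrow \overline{F_X}$ into a constant-free finite functor, so one may take $F_x = \overline{F_X}$.

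For the converse, suppose $|X(0)| < \infty$ and for each $x \in X(0)$ there is a monomorphism $X_x \hookrightarrow F_x$ with $F_x$ constant-free and finite. By Lemma \ref{lem:connected components}(1) there is a natural isomorphism $X \cong \amalg_{x \in X(0)} X_x$. The idea is to assemble the individual embeddings into a single embedding into a finite functor. I would set
\[
 F_X := \fieldp \oplus \bigoplus_{x \in X(0)} F_x,
\]
which is finite since $X(0)$ is finite and each $F_x$ is finite (a finite direct sum of finite functors is finite, the composition series concatenating). The underlying presheaf of $F_X$ is, by Lemma \ref{lem:adj_F_linearization}, the product $\fieldp \times \prod_{x} F_x$ (the finitely many summands make product and coproduct of underlying sets manageable), and I would define $X \to F_X$ on the component $X_x$ by sending it via $(e_x, \iota_x)$, where $\iota_x : X_x \hookrightarrow F_x$ is the given embedding and $e_x \in \fieldp[X(0)]$, or rather a choice of distinct scalar/basis label in the leading $\fieldp$-factor, records which component $x$ a point came from. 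Checking injectivity amounts to seeing that the label separates the different connected components while $\iota_x$ is injective within each component.

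The main obstacle I anticipate is the converse, specifically making the component-separating map into a genuine monomorphism of presheaves rather than merely a levelwise injection, and being careful about the distinction between coproducts of sets and products inside $\f$. The subtlety is that a map out of a coproduct $\amalg_x X_x$ into a product-like target must be injective on each $X_x(V)$ and must also keep the images of distinct components $X_x$ and $X_{x'}$ disjoint in $F_X(V)$ for every $V$; the leading $\fieldp$-summand (or more robustly a summand $\fieldp[X(0)]$ encoding the discrete set $X(0)$) is precisely what guarantees this separation, since the image in that factor is the constant $x \in X(0) \hookrightarrow \fieldp[X(0)]$, which is preserved by all the structure maps $X(V) \to X(0)$. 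I would make this rigorous by using the natural inclusion $X(0) \hookrightarrow \fieldp[X(0)]$ of Lemma \ref{lem:adj_F_linearization} as the separating factor, so that $F_X = \fieldp[X(0)] \oplus \bigoplus_x F_x$, which remains finite since $X(0)$ is finite.
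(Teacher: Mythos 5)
Your proposal is correct and follows essentially the same route as the paper: project onto the constant-free summand $\overline{F_X}$ for the forward direction, and embed into $\fieldp[X(0)] \oplus \bigoplus_{x \in X(0)} F_x$ for the converse, using the $\fieldp[X(0)]$ summand to separate the connected components. Your closing refinement (replacing the initial single $\fieldp$ factor by $\fieldp[X(0)]$) is exactly the paper's construction and is indeed needed when $|X(0)| > p$.
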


\begin{proof}
First suppose that $X$ is a finite presheaf, so that there exists  $X \hookrightarrow F_X$ where  $F_X \in \ob 
\f$ is a finite functor. Then $X(0)$ is a finite set and, 
for any $x \in X$, the map 
\[
 X_x \hookrightarrow X \hookrightarrow F_X \twoheadrightarrow \overline{F_X} 
\]
is a monomorphism (by Lemma \ref{lem:connected components}). 

Conversely, suppose  that $X(0)$ is a finite set and that there exist 
injective 
maps $f_x : X_x \hookrightarrow 
F_x$, $\forall x \in X(0)$, with $F_x \in \ob \f$ finite and constant-free. Then the 
map 
\[
 X \hookrightarrow F_X :=  \fieldp [X(0)] \oplus \bigoplus_{x \in X(0)}  F_x 
\]
defined on $X_x$ by the constant map $X_x \rightarrow \fieldp[X(0)]$ to $[x]$, 
the map $f_x$ to $F_x$ and the zero map to the  components $F_y$, $y \neq 
x$, is an injection into a finite functor of $\f$, exhibiting $X$ as a finite functor.
\end{proof}
 
\begin{rem}
The definition of a finite presheaf extends verbatim to $\gps$. This leads to 
no increased generality, since a finite presheaf necessarily takes values in 
 finite sets.
\end{rem}

\begin{lem}
\label{lem:elementary-prop-finite}
 Let $X, Y \in \ob \prshv$ be finite presheaves. Then 
 \begin{enumerate}
  \item 
  $X \amalg Y$ is finite; 
  \item 
  $X \times Y$ is finite; 
  \item 
  if $U \subset X$ is a subobject, then $U$ is finite; 
  \item 
  if $X, Y$ are connected, with respective basepoints $x, y$, then $X \vee Y$ 
is finite.
\end{enumerate}
\end{lem}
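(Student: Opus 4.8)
The plan is to prove each of the four closure properties by explicitly producing a monomorphism into a finite functor of $\f$, starting from the defining monomorphisms $X \hookrightarrow F_X$ and $Y \hookrightarrow F_Y$ guaranteed by Definition \ref{def:finite}. Throughout I would use that the forgetful functor $\f \to \prshv$ preserves products (Lemma \ref{lem:adj_F_linearization}), that the category $\f$ is abelian with finite objects closed under subobjects, quotients, and finite direct sums, and that the underlying presheaf of $F \oplus G$ is $F \times G$.

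For the product $X \times Y$ in (2), I would simply take the product of the two given monomorphisms to obtain $X \times Y \hookrightarrow F_X \times F_Y$; since $F_X \times F_Y$ is the underlying presheaf of $F_X \oplus F_Y$ by Lemma \ref{lem:adj_F_linearization}, and a finite direct sum of finite functors is finite, this is an embedding into a finite functor. For the coproduct $X \amalg Y$ in (1), the cleanest route is via the criterion of Lemma \ref{lem:finite2}: by that lemma finiteness is detected on connected components, $(X \amalg Y)(0) = X(0) \amalg Y(0)$ is finite, and the connected components of $X \amalg Y$ are exactly those of $X$ together with those of $Y$, each of which already embeds into a constant-free finite functor by hypothesis; so the criterion is satisfied. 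For the subobject statement (3), if $U \subset X$ then composing $U \hookrightarrow X \hookrightarrow F_X$ exhibits $U$ as a subobject of the same finite functor $F_X$, which is immediate.

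For the wedge $X \vee Y$ in (4), where $X, Y$ are connected, I would again invoke Lemma \ref{lem:finite2}. The wedge $X \vee Y$ is the coproduct in $\pvc$ (Lemma \ref{lem:prshv_prod_coprod}), so it is connected, meaning $(X \vee Y)(0)$ is a single point. Its unique component is $X \vee Y$ itself, and I must embed this into a constant-free finite functor. Writing $X \hookrightarrow \overline{F_X}$ and $Y \hookrightarrow \overline{F_Y}$ for the constant-free embeddings furnished by the second part of Lemma \ref{lem:finite2} (using that $X, Y$ are connected), I would map $X \vee Y$ into $\overline{F_X} \oplus \overline{F_Y}$ by sending $X$ via $(\iota_X, 0)$ and $Y$ via $(0, \iota_Y)$; the two copies agree on the common basepoint since both land at $0$, so this is a well-defined map of presheaves, and it is injective because away from the basepoint the two summands separate the two wedge factors. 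The direct sum $\overline{F_X} \oplus \overline{F_Y}$ is a constant-free finite functor, completing the argument.

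The main obstacle is the injectivity and well-definedness check in the wedge case (4): one must verify that the two pieces glue correctly at the basepoint and that no collision occurs elsewhere, which is exactly where the constant-free hypothesis and the separation into distinct summands do the work. The other three parts are essentially formal once one has Lemma \ref{lem:adj_F_linearization} and the component criterion of Lemma \ref{lem:finite2} in hand, and I would expect them to be dispatched in a sentence each.
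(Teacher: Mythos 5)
Your proof is correct and takes essentially the same approach as the paper: the product is embedded into $F_X \oplus F_Y$ via the cartesian product of the given monomorphisms, the coproduct case is reduced to the componentwise criterion of Lemma \ref{lem:finite2}, and the subobject case is immediate by composition. The only (cosmetic) divergence is in part (4), where the paper realizes $X \vee Y$ as the subobject $X \times y \cup x \times Y$ of $X \times Y$ and then invokes parts (2) and (3), whereas you write down the embedding into $\overline{F_X} \oplus \overline{F_Y}$ directly and verify the basepoint gluing and injectivity by hand --- the same map in substance, with the verification made explicit.
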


\begin{proof}
 The case of $X \amalg Y$ follows easily from Lemma  \ref{lem:finite2}. For 
$X \times Y$, 
 the inclusions $X\hookrightarrow F_X$ and $Y \hookrightarrow F_Y$, with $F_X, 
F_Y \in \ob \f$ finite, induce  $X \times Y \hookrightarrow F_X \oplus F_Y$ by 
cartesian product. 
 
 The preservation of finiteness under passage to subobjects is clear. 
 Considering $X\vee Y$  as the subobject of $X \times Y$  given by $X \times 
y \cup x \times Y$ gives the final statement.
\end{proof}

\subsection{The degree of a finite presheaf}

\begin{prop}
\label{prop:degree_prshv}
 For $X \in \ob \gps$, the following conditions are equivalent:
 \begin{enumerate}
  \item 
  $X$ is finite;
  \item 
  $X$ takes values in finite sets and there exists $n \in \nat$ such that the 
composite 
  $
   X \hookrightarrow \fieldp[X] \twoheadrightarrow q_n \fieldp [X] 
  $
is a monomorphism.
 \end{enumerate}
The degree of  a finite $X$ is the least such $n$.
\end{prop}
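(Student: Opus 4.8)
The plan is to prove the two implications by taking $q_n \fieldp[X]$ as the canonical finite functor through which $X$ factors, and then to deduce well-definedness of the degree from the compatibility of the truncations $q_n$. The whole argument rests on two inputs already available: the characterization of finite functors in Proposition \ref{prop:finite_polynomial} (finite $=$ polynomial $+$ finite-dimensional values), and the linearization adjunction $\fieldp[-] \dashv (\text{forgetful})$ of Lemma \ref{lem:adj_F_linearization} together with the universal property of $q_n$.

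For the implication $(2) \Rightarrow (1)$ I would simply set $F_X := q_n \fieldp[X]$ and check it is finite via Proposition \ref{prop:finite_polynomial}. By construction $q_n \fieldp[X]$ is polynomial of degree at most $n$, so only finite-dimensionality of its values needs verification. Since $X$ takes values in finite sets, $\fieldp[X](V) = \fieldp[X(V)]$ is finite-dimensional for each $V$; and as the adjunction unit $\fieldp[X] \twoheadrightarrow q_n \fieldp[X]$ is an epimorphism in the abelian category $\f$, its value at $V$ is a quotient of $\fieldp[X(V)]$, hence finite-dimensional. Thus $q_n \fieldp[X]$ is finite, and the hypothesized monomorphism $X \hookrightarrow q_n \fieldp[X]$ exhibits $X$ as a finite presheaf.

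For the implication $(1) \Rightarrow (2)$, suppose $X \hookrightarrow F_X$ with $F_X \in \ob \f$ finite. First, $F_X$ takes finite-dimensional, hence finite, values by Proposition \ref{prop:finite_polynomial}, so the objectwise injection forces $X$ to take values in finite sets. Next, the adjunction turns $X \hookrightarrow F_X$ into a morphism $\tilde f : \fieldp[X] \to F_X$ in $\f$ with $\tilde f \circ (X \hookrightarrow \fieldp[X]) = (X \hookrightarrow F_X)$. As $F_X$ is finite it is polynomial, of degree at most some $n$, so the universal property of $q_n$ factors $\tilde f$ as $\fieldp[X] \twoheadrightarrow q_n \fieldp[X] \xrightarrow{g} F_X$. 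The composite $X \hookrightarrow \fieldp[X] \twoheadrightarrow q_n \fieldp[X] \xrightarrow{g} F_X$ then equals the original monomorphism; since monomorphisms of $\prshv$ are exactly the objectwise injections and a composite of maps of sets being injective forces the first factor to be injective, the map $X \to q_n \fieldp[X]$ is itself a monomorphism, giving $(2)$.

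Finally, for the degree statement I would note that the set of $n$ for which $X \to q_n \fieldp[X]$ is a monomorphism is upward closed: for $m \geq n$ the canonical factorization $\fieldp[X] \twoheadrightarrow q_m \fieldp[X] \twoheadrightarrow q_n \fieldp[X]$ realizes $X \to q_n \fieldp[X]$ as $X \to q_m \fieldp[X]$ followed by a further map, so injectivity at level $n$ forces it at level $m$. Hence the set has a least element and the degree is well-defined. I do not anticipate a genuine obstacle; the only point requiring care is the bookkeeping of the adjunction and of the factorization through $q_n$, making sure the factorizations compose back to the original inclusion so that injectivity can be transferred to $X \to q_n \fieldp[X]$.
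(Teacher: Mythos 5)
Your proof is correct and follows essentially the same route as the paper: for $(2)\Rightarrow(1)$ take $q_n\fieldp[X]$ as the finite functor via Proposition \ref{prop:finite_polynomial}, and for $(1)\Rightarrow(2)$ use the linearization adjunction of Lemma \ref{lem:adj_F_linearization} to factor $\fieldp[X]\rightarrow F_X$ through $q_n\fieldp[X]$ and transfer injectivity back to $X$. You merely spell out details the paper leaves implicit (finite-dimensionality of the values of $q_n\fieldp[X]$, and the upward-closedness used for the degree, which is not even needed since any nonempty subset of $\nat$ has a least element).
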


\begin{proof}
 The condition that $X$ takes finite values is necessary; under this 
hypothesis, for any $n$, $q_n \fieldp [X]$ is a finite functor (by Proposition 
\ref{prop:finite_polynomial}), hence the 
existence of a monomorphism $X \hookrightarrow q_n \fieldp [X]$ implies that $X$ 
is finite. 
 
 Conversely, suppose that $X$ is finite, so that there is a finite functor 
$F_X$ and an inclusion $X \hookrightarrow F_X$. Let $n$ be the polynomial 
degree of $F_X$, then the induced linear map $\fieldp[X] \rightarrow F_X$ 
(provided by Lemma \ref{lem:adj_F_linearization}) factors 
across $q_n \fieldp[X]$, from which the result follows.  
\end{proof}

The relevance of the functor $q_n \circ \fieldp [-]$ is explained by the 
following straightforward  proposition, using:

\begin{nota}
 For $n \in \nat$, denote by 
 \begin{enumerate}
  \item 
   $\ffin \subset \f$ the full subcategory with objects 
 finite functors of polynomial degree at most $n$;
 \item 
 $(\gfin)_n \subset \prshv$ the full subcategory of finite presheaves of 
degree at most $n$.
 \end{enumerate}
\end{nota}

\begin{prop}
\label{prop:q_nF_ajdunction}
 For $n \in \nat$, the forgetful functor $\f \rightarrow \prshv$ restricts to 
$\ffin \rightarrow  (\gfin)_n$ and admits left adjoint 
 $
  q_n\circ  \fieldp [-] : (\gfin)_n \rightarrow \ffin. 
 $
\end{prop}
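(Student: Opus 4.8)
The plan is to obtain the claimed adjunction as the composite of two adjunctions already recorded in the excerpt — the linearization adjunction $\fieldp[-] \dashv (\text{forgetful})$ of Lemma \ref{lem:adj_F_linearization} and the truncation adjunction $q_n \dashv (\text{inclusion})$ of the polynomial functors subsection — once both functors have been shown to restrict to the subcategories $\ffin$ and $(\gfin)_n$. I would begin with the restriction in the first direction. If $F \in \ob \ffin$, then $F$ is a finite functor, so the identity $F \hookrightarrow F$ exhibits its underlying presheaf as finite; it remains to bound the degree by $n$ in the sense of Proposition \ref{prop:degree_prshv}. Here I would invoke the triangle identity for the linearization adjunction: the composite $F \hookrightarrow \fieldp[F] \rightarrow F$ of the unit with the counit is the identity of (the underlying presheaf of) $F$. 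Since $F$ has polynomial degree at most $n$, the counit $\fieldp[F] \rightarrow F$ factors through the quotient $\fieldp[F] \twoheadrightarrow q_n\fieldp[F]$; hence $F \hookrightarrow \fieldp[F] \twoheadrightarrow q_n\fieldp[F]$ is a split monomorphism, so Proposition \ref{prop:degree_prshv} gives degree at most $n$, and the forgetful functor does map $\ffin$ into $(\gfin)_n$.

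Next I would verify that $q_n \circ \fieldp[-]$ maps $(\gfin)_n$ into $\ffin$. For $X \in \ob (\gfin)_n$, the functor $q_n\fieldp[X]$ has polynomial degree at most $n$ by the construction of $q_n$. Moreover $X$ takes values in finite sets (Proposition \ref{prop:degree_prshv}), so $\fieldp[X]$ takes finite-dimensional values, and the same holds for its quotient $q_n\fieldp[X]$; by Proposition \ref{prop:finite_polynomial} this functor is therefore finite, that is, it lies in $\ffin$.

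With both restrictions established, the adjunction is formal. For $X \in \ob (\gfin)_n$ and $F \in \ob \ffin$, fullness of $\ffin$ in $\f$, the truncation adjunction (applicable because $F$ has degree at most $n$), and the linearization adjunction combine to give natural isomorphisms
\[
 \hom_{\ffin}(q_n\fieldp[X], F)
 = \hom_{\f}(q_n\fieldp[X], F)
 \cong \hom_{\f}(\fieldp[X], F)
 \cong \hom_{\prshv}(X, F),
\]
the final term being $\hom_{(\gfin)_n}(X, F)$. The only genuinely delicate point is the degree bookkeeping: one must track that $q_n\fieldp[X]$, $F$, and the target of the truncation all lie in degree at most $n$, so that the middle isomorphism is licit. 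Beyond this, the statement is a routine assembly of the two underlying adjunctions, and naturality in both variables is inherited from theirs.
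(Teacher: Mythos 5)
Your proof is correct and is exactly the ``straightforward'' argument the paper has in mind (it gives no proof, having already set up the linearization adjunction of Lemma \ref{lem:adj_F_linearization} and the truncation adjunction for $q_n$ precisely so that this proposition follows by composing them). The two restriction checks are also handled properly: the split-monomorphism argument via the triangle identity for bounding the degree of the underlying presheaf of $F$, and the appeal to Proposition \ref{prop:finite_polynomial} for the finiteness of $q_n\fieldp[X]$, which is the same reasoning the paper uses in the proof of Proposition \ref{prop:finite-type_kappa}.
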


\begin{rem}
 The restriction to $(\gfin)_n$ serves  to restrict to presheaves 
taking 
 values in finite sets. (The functor $q_n \circ \fieldp[- ] : \prshv 
\rightarrow 
\f$ does  not take values in finite functors.)
 \end{rem}

\begin{cor}
\label{cor:finite_morphism_sets}
 For $X, Y \in \ob \prshv$ such that $X$ takes values in finite sets and $Y$ is 
finite, 
  $
  \hom_{\prshv} (X, Y)
$
is a finite set. 
\end{cor}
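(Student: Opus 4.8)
The plan is to reduce everything to a hom-set computation in the abelian category $\f$, where finiteness is controlled by Proposition \ref{prop:finite_polynomial}. Since $Y$ is finite, choose by Definition \ref{def:finite} a finite functor $F_Y \in \ob \f$ together with a monomorphism $Y \hookrightarrow F_Y$. Applying the covariant functor $\hom_{\prshv}(X,-)$ to this monomorphism yields an injection $\hom_{\prshv}(X,Y) \hookrightarrow \hom_{\prshv}(X, F_Y)$, since $\hom(X,-)$ preserves monomorphisms in any category. It therefore suffices to prove that $\hom_{\prshv}(X, F_Y)$ is a finite set.

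Next I would pass from presheaves to $\f$ by means of the linearization adjunction of Lemma \ref{lem:adj_F_linearization}, which gives $\hom_{\prshv}(X, F_Y) \cong \hom_{\f}(\fieldp[X], F_Y)$, the target now being computed in $\f$. Let $n$ be the polynomial degree of the finite functor $F_Y$. Because $F_Y$ is polynomial of degree at most $n$, and because the unit $\fieldp[X] \twoheadrightarrow q_n \fieldp[X]$ is the universal map to polynomial functors of degree at most $n$, every morphism $\fieldp[X] \to F_Y$ factors uniquely through $q_n \fieldp[X]$. Precomposition with this surjection thus induces a bijection $\hom_{\f}(\fieldp[X], F_Y) \cong \hom_{\f}(q_n \fieldp[X], F_Y)$.

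Finally I would invoke finiteness on both arguments. As $X$ takes values in finite sets, $\fieldp[X]$ takes finite-dimensional values, hence so does its quotient $q_n \fieldp[X]$; being moreover polynomial of degree at most $n$, this functor is finite by Proposition \ref{prop:finite_polynomial}, while $F_Y$ is finite by construction. The hom-space $\hom_{\f}(q_n \fieldp[X], F_Y)$ between two finite objects of $\f$ is a finite-dimensional $\fieldp$-vector space---precisely the estimate already used in the proof of Proposition \ref{prop:finite-type_kappa}---and, since $\fieldp$ is a finite field, it is a finite set. Tracing back through the two isomorphisms and the initial injection then yields the finiteness of $\hom_{\prshv}(X,Y)$. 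The only step requiring genuine care, and hence the main obstacle, is the finite-dimensionality of hom-sets between finite objects of $\f$; this is standard in the theory of generic representations (cf.\ \cite{KI}) and is the same input underlying Proposition \ref{prop:finite-type_kappa}.
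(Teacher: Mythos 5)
Your argument is correct and follows essentially the same route as the paper: embed the target into a finite functor of $\f$, transport the hom-set across the linearization and polynomial-truncation adjunctions, and conclude by finiteness of $\hom_{\f}$ between finite functors. The only cosmetic difference is that the paper uses $q_n\fieldp[Y]$ (via Proposition \ref{prop:degree_prshv}) as the finite target where you use the definitional $F_Y$; the substance is identical.
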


\begin{proof}
By hypothesis, there exists $n \in \nat$ such that 
the natural morphism $Y \hookrightarrow q_n \fieldp [Y]$ is injective, hence 
there is a monomorphism:
 \[
  \hom_{\prshv} (X, Y)
  \hookrightarrow 
  \hom_{\prshv} (X, q_n \fieldp [Y]) \cong 
  \hom_{\f} (q_n \fieldp [X], q_n \fieldp [Y]),
 \]
 where the second isomorphism is given by Proposition \ref{prop:q_nF_ajdunction}.

Now $q_n \fieldp [X]$ and $ q_n \fieldp [Y]$ are both finite functors of $\f$ (by 
Proposition \ref{prop:finite_polynomial}) hence  $ 
\hom_{\f} (q_n \fieldp [X], q_n \fieldp [Y])$ is a finite-dimensional 
$\fieldp$-vector space, thus a finite set.
\end{proof}

\begin{defn}
 For $X \in \ob \prshv$ that takes values in finite sets and $n \in \nat$, let 
$X_n \in \ob \prshv$ denote the image of $X \rightarrow q_n \fieldp [X]$, 
 equipped with the canonical surjection 
 $
  X \twoheadrightarrow X_n.
 $
\end{defn}

\begin{prop}
\label{prop:finite_tower}
For $X \in \ob \prshv$ that takes values in finite sets,  the natural 
surjection $X \twoheadrightarrow X_n$ is the universal map to a finite presheaf 
of degree 
$n$. 

These maps form a tower 
\[
 \xymatrix{
 &X 
 \ar[d]
 \ar[rd]
 \ar[rrd]
 \\
 \ldots
 \ar[r]
 &
 X_{n+1}
 \ar[r]
 &
 X_n 
 \ar[r]
 &
 X_{n-1}
 \ar[r]
 &
 \ldots
\  . }
\]
\end{prop}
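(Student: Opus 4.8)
The plan is to prove the statement in three steps: first that $X_n$ is a finite presheaf of degree at most $n$, so that $X \twoheadrightarrow X_n$ is a genuine map to an object of $(\gfin)_n$; then that this map has the asserted universal property among all maps from $X$ to such presheaves; and finally that the resulting maps assemble into the displayed tower.

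For the first step, recall that $X_n$ is by definition the image of the natural map $X \to q_n \fieldp[X]$. Since $X$ takes values in finite sets, $q_n \fieldp[X]$ is a finite functor by Proposition \ref{prop:finite_polynomial}, so its underlying presheaf is finite, and hence so is the subobject $X_n$ by Lemma \ref{lem:elementary-prop-finite}. To see that $X_n$ has degree at most $n$, I would apply the argument from the proof of Proposition \ref{prop:degree_prshv} to the embedding $X_n \hookrightarrow q_n \fieldp[X]$ into a finite functor of polynomial degree at most $n$: the adjoint linear map $\fieldp[X_n] \to q_n \fieldp[X]$ factors through $q_n \fieldp[X_n]$, and since the composite $X_n \hookrightarrow \fieldp[X_n] \to q_n \fieldp[X]$ is the monomorphism $X_n \hookrightarrow q_n \fieldp[X]$, its first factor $X_n \to q_n \fieldp[X_n]$ is a monomorphism, which is precisely the condition that $X_n$ have degree at most $n$.

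For the universal property, let $Z \in \ob (\gfin)_n$ and let $g : X \to Z$ be any morphism; I must factor $g$ uniquely through $X \twoheadrightarrow X_n$. By Proposition \ref{prop:degree_prshv} there is a monomorphism $Z \hookrightarrow q_n \fieldp[Z]$, and $q_n \fieldp[Z]$ has polynomial degree at most $n$. Composing the adjunction $\fieldp[-] \dashv (\text{forgetful})$ of Lemma \ref{lem:adj_F_linearization} with $q_n \dashv (\text{inclusion})$ gives a natural bijection $\hom_{\f}(q_n \fieldp[X], q_n \fieldp[Z]) \cong \hom_{\prshv}(X, q_n \fieldp[Z])$, \emph{without} any finiteness hypothesis on $X$; applied to the composite $X \xrightarrow{g} Z \hookrightarrow q_n \fieldp[Z]$ it produces $\tilde{g} : q_n \fieldp[X] \to q_n \fieldp[Z]$ restricting along $X \to q_n \fieldp[X]$ to that composite. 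Restricting $\tilde{g}$ to the subobject $X_n \subset q_n \fieldp[X]$ yields $X_n \to q_n \fieldp[Z]$, and the crucial point is that its image lands in $Z$: since $X \twoheadrightarrow X_n$ is an epimorphism (a levelwise surjection), the image of $X_n \to q_n \fieldp[Z]$ coincides with the image of the composite $X \to q_n \fieldp[Z]$, which is the image of $g$ and hence contained in $Z$. This produces the desired $\bar{g} : X_n \to Z$ with $\bar{g} \circ (X \twoheadrightarrow X_n) = g$, and uniqueness is immediate because $X \twoheadrightarrow X_n$ is an epimorphism.

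Finally, for the tower, a finite presheaf of degree at most $n$ also has degree at most $n+1$, since the natural surjection $q_{n+1} \fieldp[Z] \twoheadrightarrow q_n \fieldp[Z]$ shows that $Z \hookrightarrow q_n \fieldp[Z]$ monic forces $Z \hookrightarrow q_{n+1} \fieldp[Z]$ monic. Hence $X_n \in \ob (\gfin)_{n+1}$, and applying the universal property of $X \twoheadrightarrow X_{n+1}$ to the map $X \twoheadrightarrow X_n$ produces a unique $X_{n+1} \to X_n$ compatible with the surjections from $X$; these assemble into the tower, with commutativity guaranteed by uniqueness. Equivalently and more concretely, the surjection $q_{n+1} \fieldp[X] \twoheadrightarrow q_n \fieldp[X]$ carries the image $X_{n+1}$ onto the image $X_n$, giving $X_{n+1} \to X_n$ directly. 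I expect the main obstacle to be the second step, specifically verifying that the factored map lands in $Z$ rather than merely in $q_n \fieldp[Z]$: because $X$ is not assumed finite one cannot invoke Proposition \ref{prop:q_nF_ajdunction} directly, and the argument must instead exploit the surjectivity of $X \twoheadrightarrow X_n$ together with the monomorphism $Z \hookrightarrow q_n \fieldp[Z]$.
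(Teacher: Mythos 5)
Your argument is correct and follows the paper's proof in essence: the same commutative-diagram factorization $\fieldp[X_n] \rightarrow q_n \fieldp[X_n] \rightarrow q_n\fieldp[X]$ establishes that $X_n$ is finite of degree at most $n$, and universality comes from the polynomial-linearization adjunction. You are right that Proposition \ref{prop:q_nF_ajdunction} cannot be quoted verbatim since $X$ need not itself be finite; the paper elides this point, and your image argument (using the surjectivity of $X \twoheadrightarrow X_n$ together with the monomorphism $Z \hookrightarrow q_n \fieldp[Z]$) is the right way to fill that gap.
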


\begin{proof}
 The key point is to check that $X_n$ is finite of degree at most $n$. This is 
clear from the commutative diagram 
 \[
  \xymatrix{
  X_n 
  \ar@{^(->}[r]
  \ar@{^(->}[d]
  &q_n\fieldp [X]
  \\
  \fieldp[X_n]
  \ar@{-->}[ru]
  \ar@{->>}[r]
  &
  q_n \fieldp [X_n], 
  \ar@{.>}[u]
  }
 \]
where the top monomorphism is given by the construction of $X_n$, the dashed 
arrow by  $\fieldp$-linear extension and the dotted arrow by the polynomial 
degree adjunction.

The universality of $X \twoheadrightarrow X_n$ follows from Proposition 
\ref{prop:q_nF_ajdunction}.
\end{proof}

\subsection{Coanalyticity of presheaves of sets}

Even under the hypothesis that $X$ takes values in finite sets, the induced 
map 
 $
  X \rightarrow \lim_\leftarrow X_n
 $ given by Proposition \ref{prop:finite_tower} 
need not be an isomorphism. 

\begin{exam}
\label{exam:Ibar}
Consider the functor $V \mapsto I_{\fieldp}(V) := {\fieldp^V}$ in $\f$, 
and take its constant-free summand $\overline{I_{\fieldp}}$. Forgetting the linear 
structure gives a connected presheaf taking values in finite sets. 

The linearization  $\field[\overline{I_{\fieldp}}]$ admits {\em no} non-trivial 
map 
to a finite 
functor. It is straightforward to reduce to proving this for $q_n \fieldp[-] 
\circ \overline{I_{\fieldp}}$, for all $n \in \nat$.
Using the identification of the subquotients of the  filtration associated to 
$\fieldp [-] \twoheadrightarrow q_n \fieldp[-]$, one shows that it is sufficient 
to show that functors of the form $\overline{I_{\fieldp}}^{\otimes i}$, for $0< i 
\in 
\nat$, admit no finite quotients. Now Kuhn's embedding theorem \cite{KI} 
implies 
that it suffices to show that $\hom_\f ( \overline{I_{\fieldp}}^{\otimes i}, S^t) 
=0$ 
 for all $t \in \nat$ and $i>0$. This follows from the case $i=1$ by using the 
exponential property of the symmetric power functors; the case $i=1$ is 
well-known, since the 
structure of $\overline{I_{\fieldp}}$ is known.
\end{exam}

\begin{prop}
\label{prop:coanalytic_finite_values}
 For $X \in \ob \prshv$ taking values in finite sets, the map 
 $$X \rightarrow \lim_{\substack{\leftarrow\\ n}} X_n$$
 is a 
bijection if and only if, for each  $V \in \ob \fdvs$, $\exists n_V$ such that 
 \[
  X (V) 
  \stackrel{\cong}{\twoheadrightarrow} 
  X_{n_V} (V).
 \]
\end{prop}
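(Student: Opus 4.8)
The plan is to fix $V \in \ob \fdvs$ and argue objectwise, reducing the statement to an elementary fact about towers of finite sets. Since limits in $\prshv$ are computed pointwise, one has $\bigl(\lim_{\substack{\leftarrow\\ n}} X_n\bigr)(V) = \lim_{\substack{\leftarrow\\ n}} X_n(V)$, so the map under consideration is $X(V) \to \lim_{\substack{\leftarrow\\ n}} X_n(V)$, assembled from the canonical surjections $X(V) \twoheadrightarrow X_n(V)$. Writing $A_n := X_n(V)$, I would first record that these are finite sets (as $X$ takes values in finite sets) and that the transition maps $A_{n+1} \to A_n$ are surjective: the universal surjection $X \twoheadrightarrow X_n$ of Proposition \ref{prop:finite_tower} factors through $X \twoheadrightarrow X_{n+1}$, and passing to the value at $V$ shows $A_{n+1} \twoheadrightarrow A_n$ since $X(V) \twoheadrightarrow A_n$ does.

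The crux is then the following elementary lemma, which I would isolate: for a tower $\cdots \twoheadrightarrow A_{n+1} \twoheadrightarrow A_n \twoheadrightarrow \cdots$ of finite sets with surjective transition maps, the limit $\lim_{\substack{\leftarrow\\ n}} A_n$ is finite if and only if the tower stabilizes (i.e.\ $A_{n+1} \to A_n$ is bijective for $n \gg 0$), and in that case the limit coincides with the stable term. The proof uses the Mittag-Leffler property: an inverse system of nonempty finite sets over $\nat$ with surjective transition maps has nonempty limit, and moreover each projection $\lim_{\substack{\leftarrow\\ n}} A_n \to A_m$ is surjective. Consequently $|\lim_{\substack{\leftarrow\\ n}} A_n| \ge |A_m|$ for every $m$; finiteness of the limit therefore bounds the non-decreasing sequence $(|A_n|)$, which is hence eventually constant, and a surjection of finite sets of equal cardinality is a bijection. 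This yields stabilization, and the stable term is then canonically the limit.

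With the lemma in hand, both implications are immediate. The condition that $X(V) \to X_{n_V}(V)$ be a bijection is equivalent, since all the maps involved are surjective, to the tower $(A_n)$ stabilizing at $n_V$ with stable value $X(V)$: indeed $X(V) \twoheadrightarrow A_n \twoheadrightarrow A_{n_V}$ composes to a bijection precisely when each factor is one. On the other hand, bijectivity of $X(V) \to \lim_{\substack{\leftarrow\\ n}} A_n$ forces $\lim_{\substack{\leftarrow\\ n}} A_n$ to be finite (as $X(V)$ is), whence the lemma gives stabilization and $\lim_{\substack{\leftarrow\\ n}} A_n = A_{n_V}$, so that $X(V) \to A_{n_V}$ is a bijection; conversely, stabilization identifies $\lim_{\substack{\leftarrow\\ n}} A_n$ with $A_{n_V}$ and the surjection $X(V) \twoheadrightarrow A_{n_V}$ with the map $X(V) \to \lim_{\substack{\leftarrow\\ n}} A_n$.

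The step I expect to be the main obstacle is the forward direction, namely deducing that the tower stabilizes: this is where the finiteness hypothesis on the values of $X$ is essential and where the Mittag-Leffler surjectivity of the projections $\lim_{\substack{\leftarrow\\ n}} A_n \to A_m$ does the real work, ruling out the genuinely profinite behaviour illustrated by Example \ref{exam:Ibar}.
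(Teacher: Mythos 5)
Your proof is correct: the objectwise reduction, the surjectivity of the transition maps inherited from Proposition \ref{prop:finite_tower}, and the Mittag-Leffler argument for towers of finite sets (finite limit forces stabilization, since the projections from the limit are surjective and the cardinalities are non-decreasing) all hold up, including the observation that a composite of surjections of finite sets is bijective only if each factor is. The paper gives no proof at all for this proposition, treating it as immediate, and your write-up is precisely the standard argument it leaves implicit, so there is nothing to contrast.
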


In general one must consider the category $\gps$ of presheaves on $\fdvs$ with 
values in profinite sets.

The following should be compared with the definition in \cite[Part II, page 
1078]{HLS} of an {\em analytic} functor from $\fdvs$ to $\pfset \op$, the 
opposite of the category of profinite sets.
(Henn, Lannes and Schwartz work with the opposite of the category of 
presheaves, 
whence their terminology {\em analytic} rather than {\em coanalytic} here.)

\begin{defn}
 An object $X$ of $\gps$ is coanalytic if 
 \[
  X \cong \lim_{\substack{\longleftarrow\\ i \in \cali}} X(i),
 \]
where the indexing category $\cali$ is cofiltered and small and $X(i)$ are 
finite presheaves.

Let $\gcoan \subset \gps$ denote the full subcategory of coanalytic functors. 
\end{defn}

\begin{defn}
For $X \in \ob \prshv$ (respectively $X\in \ob \gps$), let $X/ \gfin$ denote 
the 
full subcategory of the 
undercategory $X/ \prshv$ (resp. $X/ \gps$) with objects  $X \rightarrow Y$ 
with 
$Y$ finite. 
\end{defn}

\begin{lem}
 For $X \in \ob \prshv$ or $X \in \ob \gps$,
 \begin{enumerate}
  \item 
  $X/ \gfin$ has finite morphism sets;
  \item 
  given $X\rightarrow Y_i$, $i \in \{1, 2 \}$ of $X/ \gfin$, there is a diagram 
of morphisms of $X/ \gfin$:
  \[
   \xymatrix{
   & X
   \ar[ld]
   \ar[d]
   \ar[rd]
   \\
   Y_1 
   &
   Y_1 \times Y_2 
   \ar[l]
   \ar[r]
   &
   Y_2,
   }
  \]
in which the horizontal arrows are the projections; 
\item 
any object $X \rightarrow Y$ of $X/ \gfin$ is the range of a morphism in $X/ 
\gfin$:
\[
 \xymatrix{
 & X 
 \ar@{->>}[ld]\ar[rd]
 \\
 Y' 
 \ar@{^(->}[rr]
 &&
 Y 
 }
\]
where $Y' \subset Y$ is a sub-presheaf and $X \twoheadrightarrow Y'$ is 
surjective. 
\item 
for morphisms $f, g : (X \rightarrow Y_1) \rightrightarrows (X\rightarrow Y_2)$ 
of $X/ \gfin$, the  morphism 
$(X \twoheadrightarrow Y'_1) \rightarrow (X \rightarrow Y_1)$ given by the 
factorization of $X\rightarrow Y_1$ equalizes $f, g$.
 \end{enumerate}
In particular, the category $X/ \gfin$ is cofiltered. 
 
\end{lem}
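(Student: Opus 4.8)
The plan is to verify the four enumerated properties in turn, each a direct consequence of the elementary closure properties of finite presheaves recorded in Lemma \ref{lem:elementary-prop-finite} together with the finiteness of morphism sets from Corollary \ref{cor:finite_morphism_sets}, and then to assemble (2), (3) and (4) into the assertion that $X/\gfin$ is cofiltered. The guiding observation is that a finite presheaf necessarily takes values in finite sets, so there is no genuine profinite subtlety \emph{among the objects} of $X/\gfin$; this lets the arguments for $\prshv$ and for $\gps$ be carried out uniformly, the profinite structure of $X$ itself playing no role beyond supplying the structure maps.

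For (1), a morphism $(X \rightarrow Y_1) \rightarrow (X \rightarrow Y_2)$ is by definition a map $Y_1 \rightarrow Y_2$ in the ambient category commuting with the maps from $X$, so the morphism set embeds into $\hom_{\prshv}(Y_1, Y_2)$. As $Y_1$ is finite it takes values in finite sets, and $Y_2$ is finite, so Corollary \ref{cor:finite_morphism_sets} shows $\hom_{\prshv}(Y_1, Y_2)$ is finite, whence so is the morphism set. For (2), the product $Y_1 \times Y_2$ is finite by Lemma \ref{lem:elementary-prop-finite}(2), and the two structure maps induce a canonical $X \rightarrow Y_1 \times Y_2$ by the universal property of the product in $\prshv$ (Lemma \ref{lem:prshv_prod_coprod}); since the projections then commute with the maps from $X$, they are morphisms of $X/\gfin$, yielding the required diagram.

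For (3), I take $Y' \subset Y$ to be the pointwise image of the structure map $X \rightarrow Y$; this is a sub-presheaf, hence finite by Lemma \ref{lem:elementary-prop-finite}(3), and $X \twoheadrightarrow Y'$ is surjective by construction, so that the inclusion $Y' \hookrightarrow Y$ exhibits $(X \twoheadrightarrow Y') \rightarrow (X \rightarrow Y)$ as a morphism of $X/\gfin$. For (4), given parallel $f, g : (X \rightarrow Y_1) \rightrightarrows (X \rightarrow Y_2)$, both compose with $X \rightarrow Y_1$ to give the common map $X \rightarrow Y_2$, so $f$ and $g$ agree on the image $Y'_1 = \mathrm{image}(X \rightarrow Y_1)$; as $X \twoheadrightarrow Y'_1$ is a pointwise surjection, hence an epimorphism of presheaves, the two restrictions $f|_{Y'_1}$ and $g|_{Y'_1}$ coincide, which is precisely the statement that $(X \twoheadrightarrow Y'_1) \rightarrow (X \rightarrow Y_1)$ equalizes $f$ and $g$.

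It then remains to deduce cofilteredness by checking the three standard conditions. The category is nonempty, since the terminal one-point presheaf $*$ embeds into the constant functor $\fieldp$ and is thus finite, so the unique map $X \rightarrow *$ is an object. Property (2) supplies, for any two objects, a third admitting morphisms to both, and properties (3) and (4) together supply, for any parallel pair, a morphism into their common source that equalizes them; these are exactly the defining conditions for $X/\gfin$ to be cofiltered. I expect the only point needing care to be the $\gps$ case of step (3)—confirming that the pointwise image is a sub-presheaf and that the structure surjection is epimorphic—but because every object of $X/\gfin$ takes values in finite sets, this reduces to the corresponding elementary facts for presheaves of sets.
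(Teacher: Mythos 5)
Your proof is correct and follows essentially the same route as the paper: finiteness of morphism sets via Corollary \ref{cor:finite_morphism_sets}, the universal property of the product for (2), image factorization for (3), and the epimorphism property of the surjection $X \twoheadrightarrow Y_1'$ for (4). The paper's own proof is merely a terser sketch of exactly these steps, so your write-up is a faithful (and more detailed) expansion of it.
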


\begin{proof}
 The first statement is an immediate consequence of  Corollary 
\ref{cor:finite_morphism_sets} and the second follows from the categorical 
definition of 
the product. 
 The factorization of a morphism of presheaves is clear and applies in the 
final 
statement, using the categorical property of a surjection. 
\end{proof}

\begin{defn}
Let $X^\omega \in \ob \gcoan$ denote the presheaf of profinite sets given by 
\[
 X^\omega := \lim_{\substack{\longleftarrow\\ X\rightarrow Y \in X / \gfin}} Y,
\]
equipped with the natural  (continuous) {\em coanalytic completion} map 
$
  X\rightarrow X^\omega.
$
\end{defn}

The following is clear from the definitions:

\begin{prop}
 \label{prop:coanalytic_vs_coanalytic_completion}
 A presheaf $X \in \ob \gps$ is coanalytic if and only if the natural map $X 
\rightarrow X^\omega$ is an isomorphism.
\end{prop}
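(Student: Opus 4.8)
The plan is to prove the two implications separately, with essentially all of the content residing in the "only if" direction.

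For the "if" direction, suppose that the coanalytic completion map $X \to X^\omega$ is an isomorphism. By construction $X^\omega = \lim_{X \to Y \in X/\gfin} Y$ is a limit of finite presheaves indexed by $X/\gfin$, which the preceding lemma shows to be cofiltered (and which is essentially small, its morphism sets being finite by Corollary \ref{cor:finite_morphism_sets} and the finite presheaves forming a set of isomorphism classes). Hence $X^\omega$ is coanalytic, and therefore so is $X \cong X^\omega$. The real work is the converse.

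So suppose $X \cong \lim_{i \in \cali} X(i)$ is coanalytic, with $\cali$ cofiltered and each $X(i)$ finite. Each structure map $X \to X(i)$ is an object of $X/\gfin$, and for $\alpha : i \to j$ the induced $X(\alpha) : X(i) \to X(j)$ is a morphism of $X/\gfin$, giving a functor $F : \cali \to X/\gfin$. I would construct an explicit inverse $\phi : X^\omega \to X$ to the completion map $u : X \to X^\omega$. The projections $\pi_i := p_{X(i)} : X^\omega \to X(i)$ attached to the objects $F(i)$ are compatible with the $X(\alpha)$, since the projections of $X^\omega$ are natural with respect to morphisms of $X/\gfin$; they therefore assemble into a cone over the diagram defining $X = \lim_\cali X(i)$, hence into a map $\phi : X^\omega \to X$ with $\mathrm{pr}_i \circ \phi = p_{X(i)}$. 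That $\phi \circ u = \mathrm{id}_X$ is immediate on composing with each $\mathrm{pr}_i : X \to X(i)$, using $p_{X(i)} \circ u = \mathrm{pr}_i$. The substantive identity is $u \circ \phi = \mathrm{id}_{X^\omega}$. Testing against $p_Y$ for an arbitrary object $X \xrightarrow{f} Y$ of $X/\gfin$ and using $p_Y \circ u = f$, this reduces to $f \circ \phi = p_Y$. I would then invoke the key factorization property: since $Y$ is finite and $X$ is a cofiltered limit of finite presheaves, $f$ factors as $f = g \circ \mathrm{pr}_i$ for some $i \in \cali$ and some $g : X(i) \to Y$. Such a $g$ is exactly a morphism $F(i) \to (X \xrightarrow{f} Y)$ of $X/\gfin$, so naturality of the projections gives $g \circ p_{X(i)} = p_Y$, whence $f \circ \phi = g \circ \mathrm{pr}_i \circ \phi = g \circ p_{X(i)} = p_Y$. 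Note that only the existence, not the uniqueness, of the factorization is used.

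The main obstacle is thus the factorization property, which is the co-compactness of finite presheaves: every morphism $f : \lim_i X(i) \to Y$ into a finite presheaf factors through some finite stage $X(i)$. I would prove this by combining a level-wise argument with the polynomial-degree machinery. Level-wise, $X(V) = \lim_i X(i)(V)$ is a cofiltered limit of finite sets, so the continuous map $f_V : X(V) \to Y(V)$ into the finite set $Y(V)$ factors through some $X(i_V)(V)$. Writing $n$ for the degree of $Y$, so that $Y \hookrightarrow q_n \fieldp[Y]$, it should suffice to control the components at the finitely many spaces $\fieldp^0, \dots, \fieldp^n$: using cofilteredness of $\cali$ one passes to a single stage $i$ dominating the relevant $i_V$, and the resulting partial data ought to determine a genuine natural transformation $g : X(i) \to Y$ by reconstruction of a degree $\le n$ functor from its behaviour in dimensions $\le n$. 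The delicate point is to secure naturality of $g$ at the single stage $i$, the level-wise factorizations being a priori incompatible with the restriction maps away from the images of the projections; this is where the finiteness of the hom-sets (Corollary \ref{cor:finite_morphism_sets}) and the polynomial structure of $Y$ must be used together, effectively identifying $\hom_{\gps}(X, Y)$ with the filtered colimit $\operatorname{colim}_i \hom_{\gps}(X(i), Y)$.
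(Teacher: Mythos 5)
The paper offers no argument here (the proposition is introduced with ``the following is clear from the definitions''), so there is no official proof to compare with; judged on its own terms, your architecture is sound and, in my view, unavoidable. The ``if'' direction, the construction of the retraction $\phi$ from the cone $(p_{X(i)})_{i\in\cali}$, the identity $\phi\circ u=\mathrm{id}_X$ via joint monicity of the projections $\mathrm{pr}_i$, and the reduction of $u\circ\phi=\mathrm{id}_{X^\omega}$ to the factorization of every $f\colon X\to Y$ ($Y$ finite) through some $\mathrm{pr}_i$ are all correct, and you rightly observe that only the existence of such factorizations (cofinality of $\cali\to X/\gfin$) is needed.

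The one genuine gap is in the factorization lemma itself, which you leave at ``ought to determine a genuine natural transformation''. As sketched it does not close: a finite presheaf $Y$ of degree $n$ is \emph{generated} in rank $\le n$, which controls maps \emph{out of} $Y$, not maps \emph{into} it, so matching $f$ on $\fieldp^0,\dots,\fieldp^n$ does not by itself produce $g_V$ at every $V$, nor its naturality. Two additions repair this. First, replace each $X(i)$ by the image of $X\to X(i)$; these are again finite presheaves forming a cofiltered system with the same limit $X$, and now every $\mathrm{pr}_{i,V}$ is surjective, so $g$ is forced wherever defined, the ``extension away from the images'' problem disappears, and naturality of $g$ follows by precomposing each naturality square with the surjection $X(W)\twoheadrightarrow X(i)(W)$. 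Second, the input you need from polynomiality is a \emph{detection} statement: embedding $Y$ in a finite functor $F\in\ob\f$ of degree $\le n$ and applying Lemma \ref{lem:equivalent_reduced_poly} to its constant-free part shows that $Y(V)\to\prod_{\phi\colon\fieldp^n\to V}Y(\fieldp^n)$ (restriction along all $\phi$) is injective for every $V$. Granting this, it suffices to choose $i$ so that $f_{\fieldp^n}$ alone factors through $X(i)(\fieldp^n)$ (the usual profinite argument at the single space $\fieldp^n$): if $x,x'\in X(V)$ have the same image in $X(i)(V)$, then for every $\phi$ the elements $\phi^{*}x,\phi^{*}x'$ have the same image in $X(i)(\fieldp^n)$, hence $f_{\fieldp^n}(\phi^{*}x)=f_{\fieldp^n}(\phi^{*}x')$, i.e. $\phi^{*}f_V(x)=\phi^{*}f_V(x')$ for all $\phi$, and detection gives $f_V(x)=f_V(x')$. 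With these two points your proof is complete.
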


\begin{exam}
\label{exam:coanalytic_comp_trivial}
 For $X \in \ob \prshv$, the category $X/ \gfin$ can be the discrete category 
with one object.   Consider the presheaf $\overline{I_{\fieldp}}$ of Example 
\ref{exam:Ibar}, where 
it was shown that there are no non-trivial maps from $\overline{I_{\fieldp}}$ to a 
finite presheaf.   It follows that $(\overline{I_{\fieldp}})^\omega = *$.
 \end{exam}

\section{The relationship with unstable algebras}
\label{sect:unstable} 

The relationship between finite presheaves and $\cala$-finite generation up to 
nilpotents is made explicit 
in this section, leading to a conceptual restatement of one of the main results 
of \cite[Part II]{HLS}.

\subsection{$\cala$-finite generation up to nilpotents}

The following is the natural extension of Definition \ref{def:A_fg} to working 
modulo nilpotents.

\begin{defn}
\label{def:A_fg_upto_nil}
An unstable algebra $K \in \ob \unstalg$ is $\cala$-finitely generated up to 
nilpotents if there exists an unstable 
algebra $L$ that is $\cala$-finitely generated and an 
 $F$-epimorphism 
 $
  \xymatrix{
  L \ar@{~>>}[r]^{F-\mathrm{epi}} &K.
 }
 $
\end{defn}

\begin{rem}
Unlike the case of $\cala$-finite generation, for $K \in \ob \unstalgc$,  the 
definition of $\cala$-finitely generated up to 
nilpotents cannot be given in terms of $QK$. 
 
For example, consider the unstable algebra $K:= U (\bigoplus_{n\geq 2} F(n))$ 
at the prime $p=2$, so that $QK \cong \bigoplus_{m\geq 1} \Sigma F (m)$; $K$ is 
 not $\cala$-finitely generated up to nilpotents. Now consider the unstable 
algebra $\fieldp \oplus  \bigoplus_{m\geq 1} \Sigma F (m)$ with trivial 
algebra structure; this has the same module of indecomposables, but is 
$F$-isomorphic to $\fieldp$, hence is $\cala$-finitely generated up to 
nilpotents.
\end{rem}

The following is an immediate consequence of Kuhn's embedding theorem 
\cite{KI} that a finite functor $F \in \f$ embeds in a finite direct sum of 
symmetric power functors.

\begin{prop}
\label{prop:conseq_emb_thm}
 A presheaf $X \in \ob \gps$ is finite if and only if there is a finite 
direct sum $\bigoplus_{i \in \mathscr{I}}S^{n_i}$ of symmetric power functors 
and a monomorphism
  $
  X \hookrightarrow \bigoplus_{i \in \mathscr{I}}S^{n_i}
 $   in $\gps$.
\end{prop}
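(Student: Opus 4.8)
The plan is to read off both implications directly from the definition of a finite presheaf (Definition~\ref{def:finite}), Kuhn's embedding theorem, and the characterization of finite functors in Proposition~\ref{prop:finite_polynomial}. The whole content of the statement sits in Kuhn's theorem, which is cited, so the argument is essentially formal; the only thing requiring genuine attention is the passage between monomorphisms in the abelian category $\f$ and monomorphisms of underlying presheaves in $\gps$.

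For the forward implication, I would take a finite presheaf $X$, so that by definition there is a finite functor $F_X \in \ob\f$ together with a monomorphism $X \hookrightarrow F_X$ in $\gps$. Kuhn's embedding theorem then supplies a monomorphism $F_X \hookrightarrow \bigoplus_{i \in \mathscr{I}} S^{n_i}$ in $\f$, with $\mathscr{I}$ finite, and I would compose the two. The one point deserving care is that the latter map is a monomorphism in the abelian category $\f$, whereas the conclusion asks for a monomorphism of underlying presheaves; I would dispatch this by recalling that the forgetful functor $\f \rightarrow \gps$ merely retains the underlying (profinite) set and hence carries injective linear maps to injections, so that the composite $X \hookrightarrow F_X \hookrightarrow \bigoplus_{i \in \mathscr{I}} S^{n_i}$ is again a monomorphism in $\gps$.

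For the converse, I would start from a monomorphism $X \hookrightarrow \bigoplus_{i \in \mathscr{I}} S^{n_i}$ with $\mathscr{I}$ finite. Each symmetric power $S^{n_i}$ is polynomial of degree $n_i$ and takes finite-dimensional values over the prime field $\fieldp$, hence is a finite functor by Proposition~\ref{prop:finite_polynomial}. Since a finite direct sum of finite objects of an abelian category is again finite (one concatenates the composition series), the functor $\bigoplus_{i \in \mathscr{I}} S^{n_i}$ is finite in $\f$, and the given monomorphism therefore exhibits $X$ as a finite presheaf in the sense of Definition~\ref{def:finite}.

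As indicated, there is no serious obstacle once Kuhn's embedding theorem is granted; the only subtlety worth flagging is precisely the interchange between monomorphisms in $\f$ and monomorphisms of underlying presheaves in $\gps$, which is handled by the faithfulness and mono-preservation of the forgetful functor. I would expect the complete proof to occupy only a few lines.
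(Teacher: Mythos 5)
Your proof is correct and follows exactly the route the paper intends: the paper gives no separate argument beyond noting that the statement is an immediate consequence of Kuhn's embedding theorem, and your write-up simply makes that explicit (forward direction by composing the embedding from Definition \ref{def:finite} with Kuhn's embedding, converse by observing that a finite direct sum of symmetric powers is a finite functor via Proposition \ref{prop:finite_polynomial}). The extra care you take over the forgetful functor $\f \rightarrow \gps$ preserving monomorphisms is a reasonable, if minor, addition.
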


\begin{cor}
\label{cor:equivalent_finite_fgmodnil}
 Let $K \in \ob \unstalg$ be an unstable algebra. The following 
conditions are equivalent:
 \begin{enumerate}
  \item 
  $\g K \in \ob \gps$ is finite; 
  \item
  $K$ is $\cala$-finitely generated up to nilpotents. 
 \end{enumerate}
\end{cor}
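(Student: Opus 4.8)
The plan is to prove the two implications separately, using throughout the explicit computation $\g\,U F(n)\cong S^n$ (the underlying presheaf of the $n$th symmetric power functor), which follows from the defining adjunction of $U$ together with Example~\ref{exam:kappa_Sn}, and the fact that the contravariant functor $\g$ converts the coproduct $\otimes$ of $\unstalg$ into the product of $\gps$. Consequently, for any finite family, $\g\bigl(\bigotimes_i U F(n_i)\bigr)\cong\prod_i S^{n_i}$, which by Lemma~\ref{lem:adj_F_linearization} is the underlying presheaf of the finite functor $\bigoplus_i S^{n_i}\in\ob\f$.

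For (2)$\Rightarrow$(1), suppose $K$ is $\cala$-finitely generated up to nilpotents, so that there is an $\cala$-finitely generated $L$ and an $F$-epimorphism $L\to K$. First I would record that $\g$ sends $F$-epimorphisms to monomorphisms: if $y\in K$ has $y^{p^m}\in\mathrm{image}$ and $f,g\colon K\to H^*(BV)$ agree on that image, then $f(y)^{p^m}=g(y)^{p^m}$ forces $f(y)=g(y)$, since the Frobenius is injective on $H^*(BV)$ (at $p=2$; the odd case by the even-degree reduction of \cite{HLS}). Hence $\g K\hookrightarrow\g L$. Writing $L$ as a quotient of some $\bigotimes_i U F(n_i)$ (a finitely generated unstable module surjects onto the one defining $L$) produces a monomorphism $\g L\hookrightarrow\prod_i S^{n_i}$, so $\g L$ is finite; by Lemma~\ref{lem:elementary-prop-finite} the subobject $\g K$ is finite as well.

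For (1)$\Rightarrow$(2), suppose $\g K$ is finite. By Proposition~\ref{prop:conseq_emb_thm} there is a monomorphism $\g K\hookrightarrow\bigoplus_i S^{n_i}$ in $\gps$, whose components are elements $g_i\in\hom_{\gps}(\g K,S^{n_i})=(\kappa\g K)^{n_i}$. The natural $F$-isomorphism $\eta_K\colon K\to\kappa\g K$ of Proposition~\ref{prop:kappa_g} is in particular an $F$-epimorphism, so after raising to a large $p$-power each $g_i^{p^{m_i}}=\eta_K(k_i)$ for some $k_i\in K^{p^{m_i}n_i}$. Setting $L:=\bigotimes_i U F(p^{m_i}n_i)$, which is $\cala$-finitely generated, the elements $k_i$ define, via the adjunction $U\dashv\text{forget}$, a morphism $\Phi\colon L\to K$. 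By construction $\g\Phi\colon\g K\to\g L\cong\bigoplus_i S^{p^{m_i}n_i}$ has $i$th component the Frobenius twist of $g_i$; since the Frobenius $S^{n_i}\to S^{p^{m_i}n_i}$ is injective and the $g_i$ are jointly injective, $\g\Phi$ is a monomorphism. It then remains to deduce that $\Phi$ is itself an $F$-epimorphism, which together with the $\cala$-finite generation of $L$ yields (2).

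The main obstacle is this last deduction: that a morphism of unstable algebras inducing a monomorphism under $\g$ is an $F$-epimorphism. Applying $\kappa$ and the naturality of $\eta$ reduces it to the core extension property that $\kappa$ carries a monomorphism of presheaves to an $F$-epimorphism of unstable algebras, equivalently that for a monomorphism $X\hookrightarrow Y$ and any $f\colon X\to S^n$ the Frobenius twist of $f$ extends to $Y$ once the twist is large enough. Via the linearization adjunction of Lemma~\ref{lem:adj_F_linearization} this is an $\mathrm{Ext}^1_\f$-vanishing statement after Frobenius twisting, and establishing it (the injectivity properties of $S^*$ modulo Frobenius) is where the real work lies. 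The remaining bookkeeping, namely that $\eta_K$ being an $F$-isomorphism transfers the $F$-epimorphism from $\kappa\g K$ back to $K$, is routine in characteristic $p$.
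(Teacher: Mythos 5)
Your overall route is the paper's: for (2)$\Rightarrow$(1) you use that $\g$ turns $F$-epimorphisms (and surjections) into monomorphisms and that $\g\bigl(\bigotimes_i UF(n_i)\bigr)\cong\prod_i S^{n_i}$, then invoke closure of finiteness under subobjects (Lemma \ref{lem:elementary-prop-finite}); for (1)$\Rightarrow$(2) you embed $\g K$ into $\bigoplus_i S^{n_i}$ via Proposition \ref{prop:conseq_emb_thm}, read the components as elements of $\kappa\g K$, lift them to $K$ along the $F$-isomorphism $\eta_K$ of Proposition \ref{prop:kappa_g} after Frobenius twisting, and so construct $\Phi\colon L\to K$ with $L$ an $\cala$-finitely generated tensor product of free unstable algebras. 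All of this is correct, and building $\Phi$ so that it lands directly in $K$ is a clean way of handling the fact that $\eta_K$ points the ``wrong way''; the concluding bookkeeping you call routine (transferring the $F$-epimorphism property across the $F$-monomorphism $\eta_K$ using that its kernel is nilpotent) is indeed routine.

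However, as written the argument is incomplete at exactly its pivotal point. The step you defer --- that a morphism of unstable algebras inducing a monomorphism under $\g$ is an $F$-epimorphism, equivalently that $\kappa$ carries monomorphisms of presheaves to $F$-epimorphisms --- is not something to be re-established here by an $\mathrm{Ext}^1_\f$-vanishing argument; it is a known theorem of Henn, Lannes and Schwartz, and it is precisely the ingredient the paper's proof invokes by citation: ``$\kappa$ sends inclusions to $F$-epimorphisms'' \cite{HLS}. It belongs to the same circle of results as Proposition \ref{prop:kappa_g} (both are part of the theory around \cite[Theorem II.1.5]{HLS}, resting ultimately on Lannes' linearization principle and the injectivity of $H^*(BV)$ in $\unst$), so it is available on the same footing as the results you do cite. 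Your reduction to this statement, and your reformulation of it (every map $X\to S^n$ extends along a monomorphism $X\hookrightarrow Y$ after a sufficiently large Frobenius twist), are both correct; once the deferred step is replaced by the citation to \cite{HLS}, your argument closes up and is essentially the paper's proof.
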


\begin{proof}
 An immediate consequence of Proposition \ref{prop:conseq_emb_thm}, Proposition 
\ref{prop:kappa_g}  and Example \ref{exam:kappa_Sn}, using the fact that 
$\kappa $ sends inclusions to $F$-epimorphisms \cite{HLS}.
\end{proof}

\begin{rem}
As observed by a referee, the key ingredient here is the fact that an unstable module $M$ is $\cala$-finitely generated modulo nilpotents if and only if the associated functor $V \mapsto (T_V M)^0$  (where $T_V$ is Lannes' $T$-functor) is  finite  (this is related to Kuhn's embedding theorem).  Using this, Corollary \ref{cor:equivalent_finite_fgmodnil} has a short direct proof, by applying the functor $\g$ to a morphism of unstable algebras of the form $UM \rightarrow K$.
\end{rem}

\subsection{Reinterpreting $\unstalg/\nil$}

The above leads to the following refinement of \cite[Theorem II.1.5]{HLS}:

\begin{thm}
\label{thm:HLS_improved}
 For $X \in \ob \gps$, the natural map $X \rightarrow X^\omega$ induces an 
isomorphism of unstable algebras 
$
  \kappa X^\omega \rightarrow \kappa X.
 $

 In particular, 
\begin{enumerate}
 \item 
 $X$ is coanalytic if and only if $X \cong \g \kappa X$;
 \item 
 $\g$ induces an equivalence of categories 
 $
 (\unstalg/ \nil)\op \stackrel{\cong}{\rightarrow} \gcoan.
$
\end{enumerate}
\end{thm}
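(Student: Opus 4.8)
The plan is to treat the displayed isomorphism $\kappa X^\omega \to \kappa X$ as the crux, and then to extract the two numbered consequences from it together with the dual adjunction $\hom_\unstalg(K,\kappa X)\cong\hom_{\gps}(X,\g K)$ underlying \cite[Theorem II.1.5]{HLS}, whose unit is $X\to\g\kappa X$ and whose counit $K\to\kappa\g K$ is the $F$-isomorphism of Proposition \ref{prop:kappa_g}.

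First I would prove the main isomorphism degreewise. Since $(\kappa(-))^n=\hom_{\gps}(-,S^n)$ and each $S^n$ is a finite presheaf (Proposition \ref{prop:finite_polynomial}), it suffices to show that precomposition with $X\to X^\omega$ induces a bijection $\hom_{\gps}(X^\omega,S^n)\cong\hom_{\gps}(X,S^n)$. Surjectivity is formal: any $X\to S^n$ is an object of $X/\gfin$, so it factors through the limit cone as $X\to X^\omega\to S^n$. For injectivity I would use that the surjective quotients are cofinal in $X/\gfin$ (part (3) of the cofiltered-ness lemma for $X/\gfin$), so that for each $V$ the image of $X(V)$ is dense in the profinite set $X^\omega(V)=\lim_{X\twoheadrightarrow Y'}Y'(V)$; two continuous maps into the finite discrete set $S^n(V)$ that agree on this dense image coincide (this is exactly the equalizer property (4) of that lemma). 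A bijective morphism of unstable algebras being an isomorphism, this gives the main statement.

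Next I would identify the closure $\g\kappa X$ with the coanalytic completion $X^\omega$. The unit $\eta_X(V):X(V)=\hom_{\gps}(h_V,X)\to\hom_\unstalg(\kappa X,\kappa h_V)=\g\kappa X(V)$ is precisely $\kappa$ on hom-sets, where $h_V$ is the presheaf represented by $V$ and $\kappa h_V\cong H^*BV$. For $Y$ finite I would show $\eta_Y$ is an isomorphism: this holds for $Y=S^n$ because $\kappa S^n\cong H^*K(\field,n)$ (Example \ref{exam:kappa_Sn}), so that $\g\kappa S^n(V)=\hom_\unst(F(n),H^*BV)$ recovers $S^n(V)$, and it propagates to finite $Y$ via an embedding $Y\hookrightarrow\bigoplus_iS^{n_i}$ (Proposition \ref{prop:conseq_emb_thm}), using that $\kappa$ sends this monomorphism to an $F$-epimorphism, whence $\g\kappa$ preserves the monomorphism and $\g\kappa Y$ is squeezed between $Y$ and $\g\kappa(\bigoplus_iS^{n_i})\cong\bigoplus_iS^{n_i}$. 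For a coanalytic $Z=\lim_iY_i$ I would then pass to the limit: $\kappa$ converts the cofiltered limit into the filtered colimit $\mathrm{colim}_i\kappa Y_i$ (maps from a cofiltered limit into the finite $S^n$ factoring through a stage), and $\g$ converts that colimit back into $\lim_i\g\kappa Y_i\cong\lim_iY_i=Z$; combined with the main statement this yields $\g\kappa X\cong\g\kappa X^\omega\cong X^\omega$ for every $X$.

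With this identification the consequences are immediate. Statement (1) follows since $X\cong\g\kappa X$ via $\eta_X$ is equivalent to $X\cong X^\omega$, i.e.\ to coanalyticity (Proposition \ref{prop:coanalytic_vs_coanalytic_completion}). For (2), Proposition \ref{prop:kappa_g} and Proposition \ref{prop:g_factors_nil} give $\g K\cong\g\kappa\g K\cong(\g K)^\omega$, so $\g$ lands in $\gcoan$; the counit being an $F$-isomorphism makes $\kappa\g\cong\mathrm{id}$ on $\unstalg/\nil$, while $\g\kappa\cong\mathrm{id}$ on $\gcoan$ by the previous paragraph, exhibiting the equivalence. \textbf{The main obstacle} I anticipate is the finite case $\g\kappa Y\cong Y$: the subobject step requires both that $\g\kappa$ preserve the monomorphism $Y\hookrightarrow\bigoplus_iS^{n_i}$ and that it add nothing beyond $Y$, which is where the full faithfulness of $\kappa$ on finite presheaves (equivalently, that finite presheaves are already $\g\kappa$-closed) must be pinned down with care.
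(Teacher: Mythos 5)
Your proof of the headline isomorphism $\kappa X^\omega \to \kappa X$ is correct, and it actually supplies something the paper omits: Theorem \ref{thm:HLS_improved} is stated there with no proof at all, being presented as a reformulation of \cite[Theorem II.1.5]{HLS} made available by the preceding dictionary (Proposition \ref{prop:colimit_Afg}, Corollary \ref{cor:equivalent_finite_fgmodnil}, Proposition \ref{prop:kappa_g}). Your surjectivity-via-the-cone and injectivity-via-density steps are both sound, density following from the factorization of any object of $X/\gfin$ through a surjection $X \twoheadrightarrow Y'$ together with cofilteredness; the same image-factorization trick is also what legitimizes your later identification $\kappa X^\omega \cong \mathrm{colim}_{X/\gfin}\,\kappa Y$. (For an arbitrary cofiltered presentation $Z \cong \lim_i Y_i$ your parenthetical claim that a morphism to $S^n$ factors through a stage is not automatic --- the sectionwise factorizations need not be uniform in $V$ --- but for the canonical diagram $X/\gfin$ it works, and that is all your final assembly uses.)

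The step you yourself flag is, however, a genuine gap, and it is the mathematical heart of the theorem rather than a detail: you never prove that $\eta_Y : Y \to \g\kappa Y$ is surjective for $Y$ finite. Note that the triangle identity combined with Propositions \ref{prop:kappa_g} and \ref{prop:g_factors_nil} shows $\eta$ is invertible on every presheaf of the form $\g K$, so your missing step is exactly equivalent to the assertion that every finite presheaf lies in the essential image of $\g$ --- which is precisely what ties $\gcoan$ to $\unstalg/\nil$. Without it, neither the implication ``coanalytic $\Rightarrow X \cong \g\kappa X$'' in (1) nor the essential surjectivity of $\g$ in (2) is established, so the proposal as written does not prove the statement. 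The gap can be closed, but it needs real input, not a squeeze: given $\phi \in \g\kappa Y(V)$, restrict along $\kappa(\iota) : \kappa(\bigoplus_i S^{n_i}) \to \kappa Y$ to get a point $z \in (\bigoplus_i S^{n_i})(V)$; then (a) $z \in Y(V)$, because otherwise there is a morphism $g : \bigoplus_i S^{n_i} \to S^m$ with $g|_Y = 0$ but $g_V(z) \neq 0$ (the quotient of $\fieldp[\bigoplus_i S^{n_i}]$ by the image of $\fieldp[Y]$ is dual to a locally finite functor, since every function on a finite-dimensional $\fieldp$-vector space is polynomial --- compare the argument in Theorem \ref{thm:fg_implies_finite} --- so its nonzero elements are detected by finite quotients, and Kuhn's embedding theorem converts this into a map to a symmetric power), contradicting $\phi(g|_Y) = g_V(z)$; and (b) $\phi = \mathrm{ev}_z$ on all of $\kappa Y$, not merely on the image of $\kappa(\iota)$, which uses that $\kappa(\iota)$ is only an $F$-epimorphism together with injectivity of the Frobenius on $H^*(BV)$ at $p=2$. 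Alternatively one can quote the essential-image characterization of \cite[Theorem II.1.5]{HLS} outright, as the paper implicitly does --- but then most of your apparatus is subsumed by that citation. Granting this one step, the rest of your deductions of (1) and (2) go through.
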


 \section{Finitely generated presheaves are finite}

\subsection{The rank filtration}

For $X \in \ob \prshv$ and $n \in \nat$, the sections $X (\fieldp^n)$ have a 
natural right action of $\End (\fieldp^n)$, which 
restricts to a right $\Aut (\fieldp^n)$-action.

The following result is standard (and corresponds to the skeletal filtration of 
\cite[Part II]{HLS}):

\begin{prop}
\label{prop:rank_filtration}
For $X \in \ob \prshv$, there is a natural rank filtration 
\[
 X_{\leq 0}\subset X_{\leq 1} \subset \ldots \subset X_{\leq n} \subset X_{\leq 
n+1} \subset \ldots \subset X
\]
such that $X = \bigcup X_{\leq n}$, where $X_{\leq n}$  is the image of the 
 evaluation map:
\[
 X(\fieldp^n) \times_{\End(\fieldp^n)} \hom (- , \field^n) \rightarrow X.
\]
\end{prop}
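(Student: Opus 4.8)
The plan is to construct $X_{\leq n}$ directly as the image subpresheaf of the evaluation morphism and then verify the filtration properties by elementary linear algebra on $\fdvs$. First I would check that the evaluation
\[
 e_n : X(\fieldp^n) \times_{\End(\fieldp^n)} \hom(-, \fieldp^n) \rightarrow X, \qquad [x, g] \mapsto X(g)(x),
\]
is a well-defined morphism of presheaves. Well-definedness on the balanced product amounts to the identity $X(g)(X(\alpha)(x)) = X(\alpha g)(x)$ for $\alpha \in \End(\fieldp^n)$ and $g \in \hom(V, \fieldp^n)$, which is exactly the contravariance of $X$ reconciling the right $\End(\fieldp^n)$-action on $X(\fieldp^n)$ with the left action on $\hom(-, \fieldp^n)$; naturality in $V$ is the identity $X(h)(X(g)(x)) = X(gh)(x)$ for $h : W \rightarrow V$. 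I would then define $X_{\leq n} \subset X$ to be the image of $e_n$, so that $X_{\leq n}(V) = \{ X(g)(x) \mid g \in \hom(V, \fieldp^n),\ x \in X(\fieldp^n) \}$; the same identity $X(gh)(x) = X(h)(X(g)(x))$ shows this set is closed under the action of morphisms of $\fdvs$, hence is a subpresheaf of $X$.

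The inclusion $X_{\leq n} \subset X_{\leq n+1}$ is where the linear structure of $\fdvs$ is used. I would fix a splitting $\iota : \fieldp^n \hookrightarrow \fieldp^{n+1}$ with retraction $\rho : \fieldp^{n+1} \twoheadrightarrow \fieldp^n$, so that $\rho \iota = \mathrm{id}$. Given $s = X(g)(x) \in X_{\leq n}(V)$, set $y := X(\rho)(x) \in X(\fieldp^{n+1})$ and $g' := \iota g : V \rightarrow \fieldp^{n+1}$. Contravariance then gives $X(g')(y) = X(\rho g')(x) = X(\rho\iota g)(x) = X(g)(x) = s$, exhibiting $s \in X_{\leq n+1}(V)$.

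Finally, for exhaustion, given any section $s \in X(V)$ with $d := \dim V$, I would choose a linear isomorphism $g : V \xrightarrow{\cong} \fieldp^d$ and set $x := X(g^{-1})(s) \in X(\fieldp^d)$; then $X(g)(x) = X(g^{-1} g)(s) = X(\mathrm{id}_V)(s) = s$, so $s \in X_{\leq d}(V)$ and therefore $X = \bigcup_n X_{\leq n}$. Naturality in $X$ is immediate, since any morphism $X \rightarrow X'$ commutes with all the evaluation maps $e_n$ and hence carries $X_{\leq n}$ into $X'_{\leq n}$. I do not expect a serious obstacle here: the construction is formal, and the only point requiring genuine care is tracking the variance conventions so that the balanced product and its universal evaluation are set up consistently. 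The sole geometric input is the existence of the splitting $\iota, \rho$ driving the increasing property, which is automatic over a field.
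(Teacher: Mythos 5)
Your proof is correct and is exactly the intended argument: the paper states this result as standard (it corresponds to the skeletal filtration of \cite[Part II]{HLS}) and gives no proof, and your direct verification — well-definedness of evaluation on the balanced product via contravariance, sectionwise image as subpresheaf, the splitting $\rho\iota = \mathrm{id}$ for the inclusions, and an isomorphism $V \cong \fieldp^{\dim V}$ for exhaustion — is precisely the standard one. All variance bookkeeping in your argument checks out, so there is nothing to add.
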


\begin{defn}
 For $X \in \ob \prshv$ and $n \in \nat$, let $X_\reg (n)$ be  the set of {\em 
regular 
elements} of $X(\fieldp^n)$, namely the right $\Aut (\fieldp^n)$-set given by
 \[
  X_\reg(n) := X(\fieldp^n) \backslash X_{\leq n-1} (\fieldp^n).
 \]
\end{defn}

The following is related to the Key Lemma, \cite[Lemma II.2.1]{HLS} and its 
associated results.

\begin{lem}
\label{lem:presheaf_rank_filt}
 For $X \in \ob \prshv$ and $1\leq n \in \nat$, 
 \begin{enumerate}
  \item 
  the quotient presheaf $X_{\leq n}/ X_{\leq n-1}$ is naturally isomorphic to 
\[
 V \mapsto 
 * \amalg 
 \Big(X_\reg (n) \times_{\Aut 
(\fieldp^n)} \Surj (V, \fieldp^n)\Big)
\]
(where $\Surj (V, \fieldp^n) \subset \hom (V, \fieldp^n)$ is the set of 
surjective 
morphisms) considered as a quotient presheaf of $  X(\fieldp^n) 
\times_{\End(\fieldp^n)} \hom (- , \fieldp^n) \rightarrow X$.
\item 
 there is a natural isomorphism of $\mathrm{Aut}(V)$-sets:
\[
 X_{\leq n} (V) \backslash X_{\leq n-1} (V) \cong X_\reg (n) \times_{\Aut 
(\fieldp^n)} \Surj (V, \fieldp^n). 
\] 
 \end{enumerate}
\end{lem}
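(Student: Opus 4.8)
The plan is to construct, for each $V$, the evaluation map
\[
 \Phi_V : X_\reg(n) \times_{\Aut(\fieldp^n)} \Surj(V, \fieldp^n) \rightarrow X(V),
 \qquad (x, \phi) \mapsto X(\phi)(x),
\]
and to prove that it is an $\Aut(V)$-equivariant bijection onto $X_{\leq n}(V) \setminus X_{\leq n-1}(V)$; statement (1) is then obtained by reassembling these bijections into an isomorphism of presheaves. First I would record that $\Phi_V$ is well defined on the balanced product, since $X(\phi)(X(g) x) = X(g\phi)(x)$ for $g \in \Aut(\fieldp^n)$ by contravariance, and that it is $\Aut(V)$-equivariant, since $X(\phi\alpha)(x) = X(\alpha) X(\phi)(x)$ for $\alpha \in \Aut(V)$.

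For the image, I would use that each $X_{\leq m}$ is a subpresheaf (Proposition \ref{prop:rank_filtration}), so that $X(\beta)$ preserves the filtration for every morphism $\beta$. Since every surjection of vector spaces splits, if $\phi \in \Surj(V, \fieldp^n)$ has section $s$ and $X(\phi)(x) \in X_{\leq n-1}(V)$, then $x = X(s) X(\phi)(x) \in X_{\leq n-1}(\fieldp^n)$, contradicting regularity of $x$; hence $\Phi_V$ lands in $X_{\leq n}(V) \setminus X_{\leq n-1}(V)$. Conversely, any element of $X_{\leq n}(V)$ is $X(\psi)(y)$ with $\psi : V \rightarrow \fieldp^n$ and $y \in X(\fieldp^n)$; if $\psi$ is not surjective it factors through $\fieldp^{n-1}$, and if $y$ is not regular it is pulled back from $\fieldp^{n-1}$, so in either case $X(\psi)(y) \in X_{\leq n-1}(V)$. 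Thus every element outside $X_{\leq n-1}(V)$ has a representative with $\psi$ surjective and $y$ regular, giving surjectivity of $\Phi_V$ onto the regular part.

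The main obstacle is injectivity, which I expect to be the crux and which corresponds to the Key Lemma of \cite{HLS}: I would show that $w := X(\phi)(x)$ determines the subspace $\ker\phi$, independently of the regular representative. Suppose $X(\phi)(x) = X(\phi')(x')$ with $x, x'$ regular and $\phi, \phi'$ surjective, and suppose $\ker\phi \neq \ker\phi'$. As the two kernels have equal dimension, there is $v \in \ker\phi' \setminus \ker\phi$; I would choose a section $s$ of $\phi$ whose image is a complement of $\ker\phi$ containing $v$, so that $s(\phi(v)) = v$. Setting $g := \phi' s \in \End(\fieldp^n)$, the identity $x = X(s)(w) = X(\phi' s)(x') = X(g)(x')$ together with regularity of $x$ forces $g \in \Aut(\fieldp^n)$; but $g(\phi(v)) = \phi'(v) = 0$ with $\phi(v) \neq 0$, so $g$ is not injective, a contradiction. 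Hence $\ker\phi = \ker\phi'$, and there is a unique $g \in \Aut(\fieldp^n)$ with $\phi' = g\phi$; choosing a section $s$ of $\phi$ then gives $x = X(s) X(\phi')(x') = X(\phi' s)(x') = X(g)(x')$, since $\phi' s = g\phi s = g$, so $(x, \phi) = (X(g) x', \phi) = (x', g\phi) = (x', \phi')$ in the balanced product, proving injectivity.

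Finally, for statement (1), I would equip $V \mapsto * \amalg (X_\reg(n) \times_{\Aut(\fieldp^n)} \Surj(V, \fieldp^n))$ with the presheaf structure sending $(x, \phi) \mapsto (x, \phi\alpha)$ when $\phi\alpha$ remains surjective and to $*$ otherwise, and check that this is precisely the quotient of $X(\fieldp^n) \times_{\End(\fieldp^n)} \hom(-, \fieldp^n)$ collapsing the non-regular and non-surjective locus: the $\End(\fieldp^n)$-relation is respected because a non-invertible $\theta$ sends both a regular class and a surjection into the degenerate locus. The composite $X(\fieldp^n) \times_{\End(\fieldp^n)} \hom(-, \fieldp^n) \twoheadrightarrow X_{\leq n} \twoheadrightarrow X_{\leq n}/X_{\leq n-1}$ kills exactly this locus, by the image computation above, so it factors through a morphism of presheaves $\bar\Phi$ which is $\Phi_V$ (together with $* \mapsto *$) in each degree; being a levelwise bijection, $\bar\Phi$ is the desired isomorphism.
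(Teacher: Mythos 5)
Your proposal is correct and is essentially the paper's own argument: the crux in both is the same linear-algebra step, namely splitting a surjection by a section whose image contains a chosen vector detecting the difference of kernels, and using regularity to exclude non-invertible endomorphisms of $\fieldp^n$ --- this is exactly the paper's case analysis (elements corresponding to different cosets of $\Surj(V,\fieldp^n)$ versus the same coset). The only difference is organizational: you build the evaluation map into $X(V)$ and prove part (2) first, then assemble part (1), whereas the paper constructs the quotient map of part (1), reduces its injectivity to the bijection at $V=\fieldp^n$ by restricting along embeddings $\fieldp^n\hookrightarrow V$, and obtains (2) as an immediate consequence.
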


\begin{proof}
 By definition, $X_{\leq n}$ is the image 
 \[
   X(\fieldp^n) \times_{\End(\fieldp^n)} \hom (- , \fieldp^n) \twoheadrightarrow 
X_{\leq n} \hookrightarrow X
 \]
of the map induced by evaluation. From the definition of $X_\reg(n)$, this  
induces a 
surjection 
\begin{eqnarray}
 \label{eqn:quotient}
 * \amalg 
 \Big(X_\reg (n) \times_{\Aut 
(\fieldp^n)} \Surj ( -, \fieldp^n)\Big)
\twoheadrightarrow
X_{\leq n}/ X_{\leq n-1}.
\end{eqnarray}
Now $\Surj (V, \fieldp^n)$ is a free left $\Aut (\fieldp^n)$-set 
with cosets $\overline{\Surj (V, \fieldp^n)}$ in bijection with the set 
of codimension $n$ subspaces of $V$. Hence, as sets, 
$$X_\reg (n) \times_{\Aut 
(\fieldp^n)} \Surj ( V, \fieldp^n) 
\cong 
X_\reg (n) \times
\overline{\Surj (V, \fieldp^n)}. 
$$

For the first point, it remains to show that the map (\ref{eqn:quotient}) is 
injective. By construction, this is  true for sections with  $\dim V \leq n$ 
and, for $V= \fieldp^n$, 
both sides of (\ref{eqn:quotient}) identify with the set $* \amalg X_\reg (n)$. 

For the general case, given two sections $x \neq y \in X_\reg (n) 
\times_{\Aut 
(\fieldp^n)} \Surj ( V, \fieldp^n) $, using the fact that any surjection admits a 
section, there exists a morphism $\phi : \fieldp^n \hookrightarrow V$ such that 
one of the following hold: 
\begin{enumerate}
\item 
$x \phi \in X_\reg (n)$ and $y \phi = *$,  (in the case that $x$, $y$ 
correspond 
to different cosets in $\overline{\Surj (V, \fieldp^n)}$);
 \item 
$ x \phi  \neq  y \phi \in X_\reg (n)$ (in the remaining case).
\end{enumerate}
This implies the required injectivity. 

The second statement is an immediate consequence.
\end{proof}

\begin{rem}
 A morphism of presheaves $f : X \rightarrow Y$ does not in general restrict to 
a morphism $X_\reg (n) \rightarrow Y_\reg (n)$. 
\end{rem}

\begin{prop}
\label{prop:equivalent_mono}
 For a morphism of presheaves $f : X \rightarrow Y$ in $\prshv$, the following 
conditions are equivalent:
 \begin{enumerate}
  \item 
  $f$ is a monomorphism $X \hookrightarrow Y$; 
  \item 
  for each $n \in \nat$, $f$ restricts to a monomorphism $X_\reg (n) 
\hookrightarrow Y_\reg (n)$. 
 \end{enumerate}
\end{prop}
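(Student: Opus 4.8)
The plan is to read off everything from the rank filtration of Proposition~\ref{prop:rank_filtration} and the identification of its subquotients in Lemma~\ref{lem:presheaf_rank_filt}. Recall that a monomorphism of set-valued presheaves is precisely a levelwise injection, so that (1) asserts that $f_V\colon X(V)\to Y(V)$ is injective for every $V\in\ob\fdvs$. Naturality of $f$ immediately gives $f(X_{\leq m})\subseteq Y_{\leq m}$ for all $m$, so $f$ always preserves the filtration; the real content is that, under either hypothesis, $f$ also respects the \emph{complement} of the previous filtration step, i.e. carries regular elements to regular elements. I would isolate this as the statement that, for $x\in X(\fieldp^n)$, one has $x\in X_{\leq n-1}(\fieldp^n)$ if and only if $f(x)\in Y_{\leq n-1}(\fieldp^n)$, and then feed it into both directions.

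For (1)$\Rightarrow$(2), the forward half of that equivalence is naturality; the substantive half uses injectivity. Suppose $f(x)\in Y_{\leq n-1}(\fieldp^n)$. After factoring a structure map through its image, this means $f(x)=Y(\gamma)(w)$ for some surjection $\gamma\colon\fieldp^n\twoheadrightarrow\fieldp^r$ with $r\leq n-1$ and some $w\in Y(\fieldp^r)$. Choosing a section $s$ of $\gamma$ and setting $x':=X(s)(x)$, naturality gives $f(x')=Y(s)(f(x))=Y(\gamma s)(w)=w$, whence $f(X(\gamma)(x'))=Y(\gamma)(w)=f(x)$; injectivity of $f_{\fieldp^n}$ then forces $x=X(\gamma)(x')\in X_{\leq n-1}(\fieldp^n)$. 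Thus $f$ maps $X_\reg(n)$ into $Y_\reg(n)$, and injectivity of this restriction is inherited from injectivity of $f_{\fieldp^n}$, giving (2).

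For (2)$\Rightarrow$(1), I would argue layer by layer using Lemma~\ref{lem:presheaf_rank_filt}(2), which identifies $X_{\leq n}(V)\backslash X_{\leq n-1}(V)$ with $X_\reg(n)\times_{\Aut(\fieldp^n)}\Surj(V,\fieldp^n)$ via $[\xi,\sigma]\mapsto X(\sigma)(\xi)$, and likewise for $Y$. Since, by hypothesis, $f$ sends $X_\reg(n)$ into $Y_\reg(n)$ and is $\Aut(\fieldp^n)$-equivariant there (the complement of the $\Aut(\fieldp^n)$-stable set $X_{\leq n-1}(\fieldp^n)$ is again stable), the relation $f(X(\sigma)(\xi))=Y(\sigma)(f(\xi))$ shows that $f$ carries the rank-$n$ layer of $X(V)$ into the rank-$n$ layer of $Y(V)$, inducing on layers the map $[\xi,\sigma]\mapsto[f(\xi),\sigma]$. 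Because $\Surj(V,\fieldp^n)$ is a free $\Aut(\fieldp^n)$-set, this balanced product is $X_\reg(n)\times\overline{\Surj(V,\fieldp^n)}$ as a set, so an equivariant injection on the first factor induces an injection of layers. Finally, the layers partition $X(V)$ and $Y(V)$ and are preserved by $f$, so injectivity on each layer assembles into injectivity of $f_V$ for every $V$, which is (1).

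The step I expect to be the main obstacle is the reverse implication in the equivalence used for (1)$\Rightarrow$(2): that a global monomorphism cannot strictly lower the rank of a regular element. This is exactly the phenomenon the Remark preceding the statement cautions against for arbitrary morphisms, so injectivity must be used essentially rather than formally, and the section argument above is precisely where it enters. Granting that, the (2)$\Rightarrow$(1) direction is largely bookkeeping resting on the freeness of the $\Aut(\fieldp^n)$-action on $\Surj(V,\fieldp^n)$ recorded in Lemma~\ref{lem:presheaf_rank_filt}.
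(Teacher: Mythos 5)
Your proof is correct and takes essentially the same route as the paper: the forward direction shows that a monomorphism must preserve regular elements (your section argument produces exactly the paper's non-invertible endomorphism $\alpha = s\gamma$ with $f(x)=f(x)\alpha$, forcing $x=x\alpha$), and the converse is the paper's induction on the rank filtration using the layer identification of Lemma~\ref{lem:presheaf_rank_filt} and the freeness of the $\Aut(\fieldp^n)$-action on $\Surj(V,\fieldp^n)$. No gaps.
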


\begin{proof}
 If $f$ is a monomorphism, it is sufficient to show that, for each $n \in 
\nat$, 
$f$ sends $X_\reg (n)$ to $Y_\reg (n)$. Suppose that $x \in X_\reg (n)$; if 
$f(x)$ is not regular, then there exists a non-invertible $\alpha \in \End 
(\fieldp^n)$ such that $f(x)= f(x) \alpha$. By injectivity of $f$, it follows 
that $x = x\alpha$, a contradiction.
 
The converse is established by induction on the rank filtration. For $V \in \ob 
\fdvs$, as in the proof of  Lemma \ref{lem:presheaf_rank_filt},  
\[
X_{\leq n} (V) \backslash X_{\leq n-1} (V) \cong 
X_\reg (n) \times
\overline{\Surj (V, \fieldp^n)}.
\]
Hence, the inclusion $X_\reg (n) \hookrightarrow Y_\reg (n)$ of right $\Aut 
(\fieldp^n)$-sets induces a monomorphism
\[
  X_{\leq n} (V) \backslash X_{\leq n-1} (V)
   \hookrightarrow 
   Y_{\leq n} (V) \backslash Y_{\leq n-1} (V)
\]
and thus, by induction upon $n$, $X_{\leq n } (V) \hookrightarrow Y_{\leq n} 
(V)$. The result 
follows by passage to the colimit as $n \rightarrow \infty$.
 \end{proof}

\begin{cor} 
\label{cor:criterion_mono}
 For a morphism of presheaves $f : X \rightarrow Y$ in $\prshv$ such that $X = 
X_{\leq n}$, $f$ is a monomorphism if and only if 
 \[
  f : X(\fieldp^n) \hookrightarrow Y(\fieldp^n)
 \]
is a monomorphism of sets. 
\end{cor}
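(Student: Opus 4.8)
The plan is to deduce the corollary directly from Proposition \ref{prop:equivalent_mono}, using the hypothesis $X = X_{\leq n}$ to cut down the ranks that need to be considered. The first observation is that, since $X = X_{\leq n}$, one has $X_\reg(m) = X(\fieldp^m) \setminus X_{\leq m-1}(\fieldp^m) = \emptyset$ for every $m > n$, because in that range $X(\fieldp^m) = X_{\leq n}(\fieldp^m) \subseteq X_{\leq m-1}(\fieldp^m)$. Consequently the conditions of Proposition \ref{prop:equivalent_mono} for $m > n$ hold vacuously, and $f$ is a monomorphism if and only if it restricts to a monomorphism $X_\reg(m) \hookrightarrow Y_\reg(m)$ for each $m \leq n$. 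The forward implication is then immediate: a monomorphism of presheaves is injective on every section, in particular on $X(\fieldp^n)$.

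For the converse I would assume that $f : X(\fieldp^n) \hookrightarrow Y(\fieldp^n)$ is injective and verify the hypotheses of Proposition \ref{prop:equivalent_mono}. The first step is to propagate injectivity to the lower ranks. For $m \leq n$, I would choose a surjection $s : \fieldp^n \twoheadrightarrow \fieldp^m$ together with a section $\sigma$ satisfying $s\sigma = \mathrm{id}$; by contravariance $X(\sigma) \circ X(s) = \mathrm{id}$, so that $X(s) : X(\fieldp^m) \hookrightarrow X(\fieldp^n)$ is split injective, and likewise $Y(s)$ is injective. Naturality of $f$ gives $Y(s) \circ f = f \circ X(s)$ on $X(\fieldp^m)$; since the right-hand composite is injective (both $X(s)$ and $f$ on $X(\fieldp^n)$ are) and $Y(s)$ is injective, it follows that $f : X(\fieldp^m) \hookrightarrow Y(\fieldp^m)$ is injective for every $m \leq n$.

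The second step checks, for each $m \leq n$, that $f$ restricts to a monomorphism $X_\reg(m) \hookrightarrow Y_\reg(m)$. Injectivity of $f$ on $X_\reg(m) \subseteq X(\fieldp^m)$ is immediate from the first step, so only the assertion that $f$ carries $X_\reg(m)$ into $Y_\reg(m)$ remains. Here I would reuse the argument already appearing in the proof of Proposition \ref{prop:equivalent_mono}: were $x \in X_\reg(m)$ to have non-regular image, there would be a non-invertible $\alpha \in \End(\fieldp^m)$ with $f(x) = f(x)\alpha = f(x\alpha)$, whence $x = x\alpha$ by the injectivity of $f$ on $X(\fieldp^m)$, contradicting the regularity of $x$. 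Combining the two steps yields the required monomorphisms $X_\reg(m) \hookrightarrow Y_\reg(m)$ for all $m$, and Proposition \ref{prop:equivalent_mono} then forces $f$ to be a monomorphism.

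The hard part will be the bookkeeping in the first step: one must ensure that injectivity on the \emph{single} section $X(\fieldp^n)$ genuinely descends to every lower rank $m \leq n$, which is exactly what the split injections induced by the surjections $\fieldp^n \twoheadrightarrow \fieldp^m$ provide. Everything beyond this — the vacuity of the high-rank conditions and the preservation of regularity — is a direct transcription of facts established in or around Proposition \ref{prop:equivalent_mono}.
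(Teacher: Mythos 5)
Your proof is correct and takes essentially the same route as the paper: the paper's own proof of Corollary \ref{cor:criterion_mono} is the one-line observation that $X_\reg(k)=\emptyset$ for $k>n$ so that Proposition \ref{prop:equivalent_mono} applies, and your argument is exactly this with the implicit bookkeeping (descending injectivity from $X(\fieldp^n)$ to the lower-rank sections via split surjections, and checking that regularity is preserved) written out explicitly. Nothing is missing.
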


\begin{proof}
 By hypothesis, $X_\reg (k) = \emptyset$ for $k > n$, hence the result follows 
from Proposition \ref{prop:equivalent_mono}.
\end{proof}

 \subsection{Finite generation implies finite}

 \begin{nota}
  For $Z$ a finite right $\End (\fieldp^n)$-set, denote by 
\begin{enumerate}
 \item 
 $X_Z \in \ob \prshv$ the induced presheaf $V \mapsto Z \times_{\End (\fieldp^n) 
} 
\hom (V, \fieldp^n)$
 \item 
 $G_Z \in \ob \f$ the induced functor $V \mapsto \fieldp[Z ] \otimes _{\End 
(\fieldp^n) } \fieldp [\hom (V, \fieldp^n)]$
\end{enumerate}
equipped with the morphism $
  X_Z \rightarrow G_Z
 $ of $\prshv$ 
 induced by the canonical inclusion of right $\End (\fieldp^n)$-sets $Z 
\hookrightarrow \fieldp [Z]$.
 \end{nota}

\begin{thm}
\label{thm:fg_implies_finite}
   For $Z$ a finite right $\End (\fieldp^n)$-set, there exists $ t \in \nat$ 
such 
that the composite
   \[
      X_Z \rightarrow G_Z \twoheadrightarrow q_t G_Z
   \]
is a monomorphism. In particular, $X_Z$ is finite of degree at most $t$. 
\end{thm}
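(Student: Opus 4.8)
The plan is to reduce the monomorphism assertion to a single value by means of the rank filtration, and then to control the polynomial quotient $q_t$ through the augmentation-ideal filtration of the group-algebra functor underlying $G_Z$. First I would observe that $X_Z$ is concentrated in rank $n$: evaluating gives $X_Z(\fieldp^n) = Z \times_{\End(\fieldp^n)} \End(\fieldp^n) \cong Z$, and the evaluation map of Proposition \ref{prop:rank_filtration} for $X_Z$ is then the identity, so that $X_Z = (X_Z)_{\leq n}$. By Corollary \ref{cor:criterion_mono}, the composite $X_Z \to q_t G_Z$ is a monomorphism as soon as it is injective on $\fieldp^n$-sections. Since $G_Z(\fieldp^n) \cong \fieldp[Z]$ and the structure map $X_Z \to G_Z$ restricts at $\fieldp^n$ to the canonical inclusion $Z \hookrightarrow \fieldp[Z]$, it suffices to produce $t$ for which $G_Z(\fieldp^n) \to (q_t G_Z)(\fieldp^n)$ is injective.

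I would base the analysis on the functor $P := \fieldp[\hom(-,\fieldp^n)]$, of which $G_Z$ is a quotient of a finite direct sum $P^{\oplus m}$ (one copy for each element of a finite generating set of the right $\End(\fieldp^n)$-module $\fieldp[Z]$, using right exactness of $\fieldp[Z] \otimes_{\fieldp[\End(\fieldp^n)]} -$). For each $V$, the space $P(V)$ is the group algebra of the finite elementary abelian $p$-group $\hom(V,\fieldp^n)$, with augmentation ideal $\mathfrak{m} \subset P$, and two facts are needed. First, each subquotient $\mathfrak{m}^k/\mathfrak{m}^{k+1}$ is a quotient of $\hom(-,\fieldp^n)^{\otimes k}$, hence polynomial of degree at most $k$ because $\hom(-,\fieldp^n)$ is additive; consequently $P/\mathfrak{m}^{t+1}$ is polynomial of degree at most $t$, and the universal property of $q_t$ forces $\ker(P \to q_t P) \subseteq \mathfrak{m}^{t+1}$. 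Second, $\mathfrak{m}(\fieldp^n)$ is nilpotent, being the augmentation ideal of the group algebra of a finite $p$-group, so $\mathfrak{m}^{t+1}(\fieldp^n) = 0$ for $t$ large.

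I would then transport this bound through the surjection $P^{\oplus m} \twoheadrightarrow G_Z$. Writing $G_Z = P^{\oplus m}/J$ and using that $q_t$, being a left adjoint, is right exact and commutes with finite coproducts, one identifies $q_t G_Z \cong P^{\oplus m}/(J + N_t)$, where $N_t := \ker(P^{\oplus m} \to q_t P^{\oplus m}) = (\ker(P \to q_t P))^{\oplus m}$. Hence $\ker(G_Z \to q_t G_Z)$ is a subquotient of $N_t \subseteq (\mathfrak{m}^{t+1})^{\oplus m}$, and this vanishes at $\fieldp^n$ once $t$ exceeds the nilpotence degree of $\mathfrak{m}(\fieldp^n)$. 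For such $t$ the map $G_Z(\fieldp^n) \to (q_t G_Z)(\fieldp^n)$ is an isomorphism, so $Z$ injects into $(q_t G_Z)(\fieldp^n)$ and $X_Z \to q_t G_Z$ is a monomorphism. Finally $q_t G_Z$ is polynomial of degree at most $t$ with finite-dimensional values, hence finite by Proposition \ref{prop:finite_polynomial}, exhibiting $X_Z$ as finite of degree at most $t$.

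The step I expect to be the main obstacle is the control of the polynomial quotient by the augmentation filtration, namely the inclusion $\ker(P \to q_t P) \subseteq \mathfrak{m}^{t+1}$ combined with the nilpotence of $\mathfrak{m}(\fieldp^n)$, together with the verification that this bound survives the passage to the quotient $G_Z$; here the right exactness of $q_t$ is essential, since it guarantees that no relations beyond the image of $\mathfrak{m}^{t+1}$ are introduced.
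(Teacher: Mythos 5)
Your proof is correct and follows the same strategy as the paper's: reduce via Corollary \ref{cor:criterion_mono} to injectivity on $\fieldp^n$-sections and exploit the presentation of $G_Z$ as a quotient of a finite direct sum of copies of $\fieldp[\hom(-,\fieldp^n)]$. The only difference is that where the paper simply cites the fact that $\fieldp[\hom(-,\fieldp^n)]$ is the inverse limit of its finite (polynomial) quotients, you prove it explicitly via the augmentation-ideal filtration and the nilpotence of the augmentation ideal of the group algebra of a finite $p$-group -- the same device the paper deploys later in the proof of Proposition \ref{prop:gp_ring_coanalytic}.
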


 \begin{proof}
  By Corollary \ref{cor:criterion_mono}, it suffices to exhibit $t \in \nat$ 
such that the natural surjection $G_Z \twoheadrightarrow q_t G_Z$ is a 
bijection 
when evaluated on $\fieldp^n$. 
  The functor $G_Z$ is a quotient of a finite direct sum of copies of the 
functor $\fieldp[\hom (-, \fieldp^n)]$.
  The latter takes finite-dimensional values and is dual to a locally finite 
functor, namely is the inverse limit of its finite quotients. 
   The same therefore holds for $G_Z$, which  implies the existence of such a 
$t$. 
  
  The final statement follows, since $q_t G_Z$ is a finite functor of 
polynomial 
degree at most $t$. 
 \end{proof}

Recall the theory of unstable algebras of finite transcendence degree from 
Section \ref{subsect:HLS_trans_deg}, 
in particular Theorem \ref{thm:HLS_trans_deg}.

 \begin{cor}
 \label{cor:trans_deg_finite_presheaf}
  Let $K$ be an unstable algebra of finite transcendence degree. Then $K$ is 
$\cala$-finitely generated up to nilpotents if and only if $\g K$ takes values 
in finite sets. 
 \end{cor}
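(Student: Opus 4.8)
The statement to prove is Corollary \ref{cor:trans_deg_finite_presheaf}: for $K$ an unstable algebra of finite transcendence degree, $K$ is $\cala$-finitely generated up to nilpotents if and only if $\g K$ takes values in finite sets. By Corollary \ref{cor:equivalent_finite_fgmodnil}, the condition that $K$ is $\cala$-finitely generated up to nilpotents is equivalent to $\g K$ being a finite presheaf. So the real content is to show that, \emph{under the finite transcendence degree hypothesis}, a presheaf of the form $\g K$ is finite if and only if it takes values in finite sets. The forward implication is immediate and formal: a finite presheaf necessarily takes values in finite sets (as noted in the remark following Lemma \ref{lem:finite2}). The whole weight of the corollary therefore rests on the converse — that finite-valuedness, together with finite transcendence degree, forces finiteness.

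\textbf{The converse.} First I would invoke the structure theory of Section \ref{subsect:HLS_trans_deg}. If $K$ has transcendence degree at most $d$, then Theorem \ref{thm:HLS_trans_deg} identifies $\g_d K$ with a profinite right $\End(\fieldp^d)$-set, and more importantly the rank filtration of Proposition \ref{prop:rank_filtration} stabilizes: the finite transcendence degree should force $X := \g K$ to satisfy $X = X_{\leq d}$ for $d$ equal to the transcendence degree, since regular elements in rank $n$ detect $n$ algebraically independent elements, and these cannot exist beyond $n = d$. Granting $X = X_{\leq d}$, the presheaf is built as a finite iterated extension of the subquotients $X_{\leq n}/X_{\leq n-1}$ described by Lemma \ref{lem:presheaf_rank_filt}, each controlled by the regular-element set $X_\reg(n)$ as a right $\Aut(\fieldp^n)$-set. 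The assumption that $X$ takes values in finite sets means each $X(\fieldp^n)$ is finite, hence each $X_\reg(n)$ is a \emph{finite} right $\Aut(\fieldp^n)$-set.

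\textbf{Assembling finiteness.} With $X = X_{\leq d}$ and each $X_\reg(n)$ finite for $n \le d$, the strategy is to realize $X$ as a finite presheaf by applying Theorem \ref{thm:fg_implies_finite} layer by layer. That theorem says precisely that for a finite right $\End(\fieldp^n)$-set $Z$, the induced presheaf $X_Z$ is finite. The finite-valued presheaf $X$ is, by Proposition \ref{prop:rank_filtration}, a quotient of $\coprod_{n \le d} X(\fieldp^n) \times_{\End(\fieldp^n)} \hom(-,\fieldp^n)$, i.e.\ a subquotient of a finite coproduct of presheaves of the form $X_Z$ with $Z = X(\fieldp^n)$ finite. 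Since finiteness of presheaves is preserved under coproducts and passage to subobjects (Lemma \ref{lem:elementary-prop-finite}), and each constituent $X_Z$ is finite by Theorem \ref{thm:fg_implies_finite}, it follows that $X$ is finite. Alternatively, and perhaps more cleanly, I would use the monomorphism criterion of Corollary \ref{cor:criterion_mono}: because $X = X_{\leq d}$, a map $X \to q_t \fieldp[X]$ is a monomorphism as soon as it is injective on $X(\fieldp^d)$, a finite set; choosing $t$ large enough to separate the finitely many points of $X(\fieldp^d)$ then exhibits $X$ as finite via Proposition \ref{prop:degree_prshv}.

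\textbf{The main obstacle.} The delicate point is establishing that finite transcendence degree genuinely forces the rank filtration to terminate, i.e.\ that $X_\reg(n) = \emptyset$ for $n$ exceeding the transcendence degree. This is where the geometry of $\g K$ enters: a regular element of $X(\fieldp^n)$ should correspond to an $\End(\fieldp^n)$-orbit detecting a copy of $H^*(B\fieldp^n)$ mapping injectively (modulo nilpotents) into $K$, which would produce $n$ algebraically independent elements and hence bound $n$ by the transcendence degree. Making this correspondence precise — translating the combinatorial notion of regularity into the algebraic statement about transcendence degree — is the crux, and it is exactly the content that Theorem \ref{thm:HLS_trans_deg} packages; I would rely on that equivalence rather than re-deriving it. Once the filtration is known to be finite, the rest is a formal assembly using the finiteness-permanence properties already established.
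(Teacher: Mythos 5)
Your overall architecture matches the paper's: reduce via Corollary \ref{cor:equivalent_finite_fgmodnil}, use the transcendence degree to control the rank filtration, and conclude with Theorem \ref{thm:fg_implies_finite}. But both of your ``assembly'' arguments have a genuine gap at the same spot. The finite transcendence degree hypothesis, as you use it, only gives that $X := \g K$ equals $X_{\leq d}$, i.e.\ that $X$ is a \emph{quotient} of the induced presheaf $X_Z$ with $Z = \g K(\fieldp^d)$. Your first argument then appeals to closure of finiteness under ``subquotients'' of coproducts of the $X_Z$; but Lemma \ref{lem:elementary-prop-finite} only covers subobjects, and closure of finiteness under quotients is neither proved in the paper nor obvious (Definition \ref{def:finite} is an embedding condition, which passes to subobjects but not to quotients). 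Your second argument asserts that one can ``choose $t$ large enough to separate the finitely many points of $X(\fieldp^d)$'' in $q_t \fieldp[X](\fieldp^d)$; that is precisely the non-formal content of Theorem \ref{thm:fg_implies_finite}, resting on the fact that $\fieldp[\hom(-,\fieldp^n)]$ is the inverse limit of its finite quotients --- it is not a free choice. Example \ref{exam:Ibar} exhibits a finite-valued presheaf admitting \emph{no} nontrivial map to a finite functor at all, so the separation must be earned, here by observing that $\fieldp[X]$ is a quotient of $G_Z$ and that being dual to a locally finite functor passes to quotients.

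The fix --- and the paper's actual proof --- is to use the full strength of Theorem \ref{thm:HLS_trans_deg}: for $K$ of transcendence degree at most $d$ with $\g K$ finite-valued, one has an isomorphism $\g K \cong X_{\g K(\fieldp^d)}$, not merely a surjection from the induced presheaf onto $\g K$. Then Theorem \ref{thm:fg_implies_finite} applies verbatim and no further assembly is needed. Your forward implication and your identification of where the transcendence degree enters are both correct; the missing ingredient is upgrading ``generated in rank $d$'' to ``induced from rank $d$''.
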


 \begin{proof}
 If $K$ is $\cala$-finitely generated up to nilpotents, then $\g K$ is finite, 
by Corollary 
\ref{cor:equivalent_finite_fgmodnil}, hence takes values in finite sets.
 
Conversely, if $K$ has finite transcendence degree $d$ and $\g K$ takes values 
in finite sets, then $\g K \cong X_{\g K (\fieldp^d)}$, where $\g K (\fieldp^d)$ 
is a finite right $\End (\fieldp^d)$-set.  The presheaf $X_{\g K (\fieldp^d)}$ is 
finite  by Theorem \ref{thm:fg_implies_finite}, hence $K$ is $\cala$-finitely 
generated up to nilpotents, by Corollary \ref{cor:equivalent_finite_fgmodnil}. 
 \end{proof}

\begin{rem}
Corollary \ref{cor:trans_deg_finite_presheaf} applies  when $\g K 
(\fieldp^d)$ is a Noetherian right $\End (\fieldp^d)$-set (see Theorem 
\ref{thm:HLS_noetherian}). However, the Corollary shows that, for 
$\cala$-finite 
generation up to nilpotents (as opposed  to Noetherian  up to nilpotents),  only 
the much weaker condition of finite values is required. 
\end{rem}
 
\subsection{Examples}
 
The prime $p$ is taken to be $2$ in this section.
 
 \begin{exam}
 \label{exam:grass}
  For $n \in \nat$, let $\grass_{\leq n} \in \ob \prshv$ denote the presheaf 
defined by  $\grass_{\leq n}(V):= \gl_n \backslash \hom  (V, \fieldp^n)$. 
  A linear embedding $\fieldp^{n-1} \hookrightarrow \fieldp^{n}$ induces an 
inclusion (independent of the choice)
   $
   \grass_{\leq n-1} \hookrightarrow \grass_{\leq n}.
  $ 
Let $\grass_n$ denote the presheaf defined by the pushout diagram in $\prshv$
\[
 \xymatrix{
 \grass_{\leq n-1}
 \ar@{^(->}[r]
 \ar[d]
 \po
 &
 \grass_{\leq n}
 \ar[d]
 \\
 {*} 
 \ar[r]
 &
 \grass_n.
 }
\]
Hence $\grass_n$ identifies as the presheaf $* \amalg \overline{\mathrm{Surj} ( 
- ,\fieldp^n)}$, using the notation 
of the proof of Lemma \ref{lem:presheaf_rank_filt}.

Each of these presheaves is connected and takes values in finite sets. 
Moreover, 
$\grass_{\leq n}$ is induced by the right $\End(\fieldp^n)$-set $\gl_n 
\backslash 
\hom (\fieldp^n, \fieldp^n)$ (which is Noetherian),  
whereas $\grass_n$ is induced (for $n>0$) by the pointed $\End 
(\fieldp^n)$-set $*_0 \amalg *_n$, where $*_n$ is regular (at the prime $2$, 
this 
can be identified with $\Lambda^n (\fieldp^n)$). The latter is {\em 
not} Noetherian for $n>1$. 
  
The unstable algebra $\kappa (\grass_{\leq n})$ is the 
Dickson algebra $D(n) := H^* (B\fieldp^n ; \fieldp) ^{\gl_n}$ whereas 
$\kappa (\grass_{n})$ 
is $\fieldp  \oplus \omega_n D(n)$, where $\omega_n$ denotes the top Dickson 
invariant. For $n=1$ these coincide, whereas for $n>1$ they differ: indeed 
$D(n)$ is a Noetherian algebra whereas, for $n >1$, $\fieldp  \oplus \omega_n 
D(n)$ is easily seen not to be Noetherian or even $\cala$-finitely generated. 
However, Theorem \ref{thm:fg_implies_finite} implies that $\fieldp  \oplus 
\omega_n D(n)$ is $\cala$-finitely generated up to nilpotents. 
\end{exam}

 \begin{exam}
 \label{exam:grass_2}
  Consider the finite presheaf $\grass_2$ over $\fieldt$. It is a basic 
calculation  that $q_2 \fieldt [\grass_2] \cong \Lambda^2 \oplus \fieldt$, 
so that 
there is a canonical inclusion 
  \[
   \grass_2 \hookrightarrow \Lambda^2 \oplus \fieldt
  \]
in $\prshv$. Since $\grass_2$ is connected, it follows that there is  
an inclusion $\grass_2 \hookrightarrow \Lambda^2$ (as a presheaf of sets, 
$\fieldt \oplus \Lambda^2$ identifies as $\Lambda^2 \amalg \Lambda^2$). This 
gives one approach to showing that $\End _{\prshv} (\grass_2)$ is the monoid 
underlying $\fieldt$. 

The inclusion $\grass_2 \hookrightarrow \Lambda^2$ corresponds to the canonical 
$F$-epimorphism in $\unstalg$:
\[
 U (\Lambda^2 (F(1)))
 \rightarrow 
 \fieldt
 \oplus 
 \omega_2 D(2).
\]
 \end{exam}

 \begin{exam}
The following example complements Example \ref{exam:coanalytic_comp_trivial}.
Consider the presheaves $\grass_{\leq n}$ of Example \ref{exam:grass} and 
form 
the colimit
  \[
   \grass_{\leq \infty} := \bigcup _n \grass_{\leq n} 
  \]
in $\prshv$. This exhibits $\grass_{\leq \infty}$ as a colimit of connected 
presheaves induced from Noetherian $\End (\fieldt^n)$-sets, whereas 
$\grass_{\leq 
\infty}$ is not finitely generated, although it does take values in finite 
sets. 
The fact that the augmentation ideal $\overline{D(n)}\subset D(n)$ is zero in 
degrees $< 2^{n-1}$ implies  that  $(\grass_{\leq 
\infty})^\omega = *$.
 \end{exam}

\section{The growth function $\gamma$}
 
 The growth functions introduced in this section provide a useful first 
approximation 
 to the property of finiteness for presheaves.

\subsection{The function $\gamma_X$}

 \begin{defn}
For  $\emptyset \neq X  \in \ob \prshv$ that takes values in finite sets, let 
$\gamma_X : \nat \rightarrow \reals_{\geq 0}$ 
be the growth function defined by 
\[
 t \mapsto \gamma_X (t):= \log_p |X (\fieldp^t) |.
\]
 \end{defn}

 \begin{rem}
  If $F \in \ob \f$ takes finite-dimensional values, then 
$
 \gamma_F (t) = \dim F (\fieldp^t),
$
so that $\gamma_F$ coincides with the growth function 
considered in \cite{KI}.
 \end{rem}

 The following characterization of polynomial degree in terms of the growth 
function is useful:
 
 \begin{prop}\cite{KI}
  \label{prop:poly_degree_growth}
 Let $F \in \ob \f$ be a functor that takes finite-dimensional values.  
 The following are equivalent:
 \begin{enumerate}
  \item 
  $F$ is polynomial of degree $d$; 
  \item 
  $\gamma_F$ is a polynomial function of degree $d$.
 \end{enumerate}
 \end{prop}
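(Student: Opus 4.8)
The plan is to prove both implications through the Eilenberg--MacLane cross-effect functors $\cre_k(F)$ (see Appendix \ref{sect:poly}) and the natural direct-sum decomposition $F(V_1\oplus\cdots\oplus V_k)\cong\bigoplus_{S\subseteq\{1,\ldots,k\}}\cre_{|S|}(F)\big((V_i)_{i\in S}\big)$, which reduces the statement to elementary finite-difference calculus. Writing $c_k:=\dim\cre_k(F)(\fieldp,\ldots,\fieldp)$ for the dimension of the $k$-th cross-effect evaluated on $k$ copies of $\fieldp$ (well defined as $\cre_k(F)$ is symmetric in its variables), applying the decomposition to $\fieldp^t=\fieldp^{\oplus t}$ gives, using $\gamma_F(t)=\dim F(\fieldp^t)$, the identity $\gamma_F(t)=\sum_{k=0}^{t}\binom{t}{k}c_k$.

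For $(1)\Rightarrow(2)$, I would argue as follows. If $F$ is polynomial of degree $d$, then $\cre_k(F)=0$ for $k>d$, so $c_k=0$ for $k>d$ and the sum becomes $\gamma_F(t)=\sum_{k=0}^{d}\binom{t}{k}c_k$. Since each $\binom{t}{k}$ is a polynomial of degree $k$ in $t$, this is a polynomial of degree at most $d$. To see the degree is exactly $d$, I would invoke that the top cross-effect $\cre_d(F)$ of a degree-$d$ functor is multilinear (one further cross-effect vanishes); over $\fieldp$ a nonzero multilinear functor cannot vanish on $(\fieldp,\ldots,\fieldp)$, so $c_d\neq0$ and $\binom{t}{d}$ supplies a genuine degree-$d$ term.

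For the converse $(2)\Rightarrow(1)$, the clean route is the inclusion--exclusion formula expressing each cross-effect dimension directly in terms of $\gamma_F$:
\[
 \dim\cre_k(F)(\fieldp^{m_1},\ldots,\fieldp^{m_k})
 =\sum_{S\subseteq\{1,\ldots,k\}}(-1)^{k-|S|}\,\gamma_F\Big(\textstyle\sum_{i\in S}m_i\Big),
\]
valid because $\cre_k(F)$ is a natural direct summand of $F(\bigoplus_i V_i)$. Taking $k=d+1$ and assuming $\gamma_F$ is a polynomial of degree $d$, the right-hand side is the iterated difference $(D_1\cdots D_{d+1}g)(m_1,\ldots,m_{d+1})$ of $g(m_1,\ldots,m_{d+1}):=\gamma_F(m_1+\cdots+m_{d+1})$, where $D_i$ subtracts the value at $m_i=0$. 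As $g$ has total degree $d$, each $D_i$ annihilates every monomial not involving $m_i$, so $D_1\cdots D_{d+1}$ retains only monomials divisible by all $d+1$ variables, of which there are none in total degree $d$. Hence the displayed dimension vanishes for all $m_i$, giving $\cre_{d+1}(F)=0$, i.e. $F$ is polynomial of degree $\leq d$; exactness of the degree follows since $\Delta^d\gamma_F(0)=c_d$ is $d!$ times the leading coefficient of $\gamma_F$, hence nonzero.

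The step I expect to be the main obstacle is precisely this converse. Knowing only that the integers $c_k=\Delta^k\gamma_F(0)$ vanish for $k>d$ is a priori strictly weaker than the functorial vanishing $\cre_{d+1}(F)=0$, because the higher cross-effects are \emph{not} multilinear and so are not detected by their values on $(\fieldp,\ldots,\fieldp)$ alone. The inclusion--exclusion formula is exactly what bridges this gap, allowing the one-variable polynomiality of $\gamma_F$ to control all multi-variable cross-effect values at once; the technical heart is verifying that the alternating sum is the mixed finite difference $D_1\cdots D_{d+1}$ and that this operator annihilates polynomials of total degree $d$.
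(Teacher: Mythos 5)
Your proof is correct, but there is no internal argument to compare it against: the paper states this proposition as a citation to Kuhn \cite{KI} and gives no proof. What you have written is a complete, self-contained proof, and it is essentially the classical Eilenberg--MacLane cross-effect argument that underlies the cited result. All of its ingredients are sound in this setting: because $F$ takes values in the abelian category of $\fieldp$-vector spaces, the full direct-sum decomposition $F(V_1\oplus\cdots\oplus V_k)\cong\bigoplus_{S}\cre_{|S|}F\bigl((V_i)_{i\in S}\bigr)$ is available (this is exactly where abelianness of the target matters --- for the group-valued functors of the paper's Appendix \ref{sect:poly} only the short exact sequence of Proposition \ref{prop:exact_cre} survives, and your inclusion--exclusion step would fail); from it both the Newton identity $\gamma_F(t)=\sum_k\binom{t}{k}c_k$ and the M\"obius-inversion formula for $\dim\cre_k F$ follow. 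Your two finishing touches are also valid: vanishing of $\cre_{d+1}F$ forces $\cre_d F$ to be additive in each slot, and an additive functor on $\fdvs$ that vanishes at $\fieldp$ vanishes identically, so $c_d\neq 0$; conversely the mixed difference $D_1\cdots D_{d+1}$ annihilates any polynomial of total degree $d$ in $d+1$ variables, so polynomiality of $\gamma_F$ gives the functorial vanishing $\cre_{d+1}F=0$. You correctly identified the converse as the delicate direction and inclusion--exclusion as the bridge from one-variable growth to multi-variable vanishing. If you write this up, make explicit the two standard facts you use implicitly: the symmetry of $\cre_k$ (needed for $c_k$ to be well defined), for which the description in Lemma \ref{lem:equivalent_cre_reduced} suffices, and the observation that contravariance is harmless via $\fdvs\op\cong\fdvs$.
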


\begin{prop}
\label{prop:finite_growth_poly}
 Suppose that $\emptyset \neq X  \in \ob \prshv$ is finite of degree $d$. Then 
 \begin{enumerate}
  \item 
  $|X (V) | < \infty$ for every $V \in \ob \fdvs$; 
  \item 
$\gamma_X (t)= O (t^d)$.
 \end{enumerate}
\end{prop}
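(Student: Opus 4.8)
The plan is to deduce both assertions directly from the degree characterization of Proposition \ref{prop:degree_prshv} together with the growth-versus-degree dictionary of Proposition \ref{prop:poly_degree_growth}; essentially no new computation is required, the substance having been isolated in those earlier results.

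For the first assertion I would invoke the definition of finiteness: there is a monomorphism $X \hookrightarrow F_X$ with $F_X \in \ob \f$ finite. By Proposition \ref{prop:finite_polynomial} a finite functor is polynomial and takes values in $\fdvs$, so each $F_X(V)$ is a finite-dimensional $\fieldp$-vector space and hence a finite set; the injection $X(V) \hookrightarrow F_X(V)$ then forces $|X(V)| < \infty$ for every $V \in \ob \fdvs$.

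For the growth estimate the key point is to use the \emph{sharp} embedding furnished by the degree rather than an arbitrary $F_X$. Since $X$ is finite of degree $d$, Proposition \ref{prop:degree_prshv} supplies a monomorphism $X \hookrightarrow q_d \fieldp[X]$, where $q_d \fieldp[X]$ is a finite functor of polynomial degree at most $d$. Evaluating at $\fieldp^t$ and passing to cardinalities gives
\[
 |X(\fieldp^t)| \leq |(q_d \fieldp[X])(\fieldp^t)| = p^{\dim (q_d \fieldp[X])(\fieldp^t)},
\]
so that, on taking $\log_p$, one obtains $\gamma_X(t) \leq \gamma_{q_d \fieldp[X]}(t)$. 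Proposition \ref{prop:poly_degree_growth} then identifies $\gamma_{q_d \fieldp[X]}$ with a polynomial function of degree at most $d$, whence $\gamma_X(t) = O(t^d)$.

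The argument presents no genuine obstacle; the only point requiring attention is to work with the degree-$d$ embedding $X \hookrightarrow q_d \fieldp[X]$ rather than the generic $F_X$ of the definition, since the polynomial degree of $F_X$ could \emph{a priori} exceed $d$ and would yield only a weaker bound. Beyond that, the sole care needed is the $\log_p$ bookkeeping that converts the set-theoretic inequality of cardinalities into the additive inequality of growth functions.
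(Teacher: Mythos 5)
Your proof is correct and follows essentially the same route as the paper: embed $X$ into a finite functor of polynomial degree (at most) $d$, compare growth functions, and invoke Proposition \ref{prop:poly_degree_growth}. Your use of Proposition \ref{prop:degree_prshv} to produce the embedding $X \hookrightarrow q_d\fieldp[X]$ is in fact slightly more explicit than the paper, which simply asserts an embedding into a finite functor of degree $d$; this is the same functor, so the arguments coincide.
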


\begin{proof}
By hypothesis, $\emptyset \neq X \hookrightarrow F_X$, where $F_X \in \ob \f$ 
is 
a finite functor of degree $d$. 
Hence $\gamma_X \leq \gamma_{F_X}$; since $F_X$ is of degree $d$, 
$\gamma_{F_X}$ 
is a polynomial function of degree $d$, by Proposition \ref{prop:poly_degree_growth}.
\end{proof}

\subsection{Applications}

Such growth functions play an important role in the work of Grodal \cite{G} and 
Lannes and 
Schwartz \cite{LS}. 

\begin{nota}
 \cite{G}
 For functions $f, g : \nat \rightarrow \reals$, write $f \lesssim g$ if, for 
all $\epsilon >0$, there exists $N \in \nat$ such that 
 $f(t) \leq (1+\epsilon ) g(t)$ for all $t \geq N$. 
\end{nota}

\begin{thm}
\label{thm:grodal}
 \cite[Theorem 3.3]{G}
Let $E$ be a connected, nilpotent finite Postnikov system with finite $\pi_1 E$ 
and finitely-generated homotopy groups. 
Then there exist $0< c,C \in \nat$ such that, as functions of $t$:
\[
 c t^d \lesssim \log_p  | \gtop E (\fieldp^t)| \lesssim Ct^d, 
\]
where for  $k := \sup \{ i | \pi_i( E) _{p} \neq 0 \}$, 
$d = k$  if $\pi_k E$ has $p$-torsion, otherwise $d=k-1$.   
\end{thm}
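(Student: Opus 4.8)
The plan is to compute the homotopy set $\gtop E(\fieldp^t) = [B\fieldp^t, E]$ directly, by induction up the Postnikov tower of $E$, thereby reducing the asymptotics to the well-understood growth of the mod~$p$ and integral cohomology of $B\fieldp^t$. (Alternatively, since $E$ satisfies exactly the hypotheses of Theorem \ref{thm:Lannes_g_gtop}, one has $\gtop E \cong \g H^* E$ and could instead estimate the growth of $\gamma_{\g H^* E}$ via the algebra $\hom_{\unstalg}(H^* E, H^* B\fieldp^t)$; the obstruction-theoretic route, however, is more transparent.) Since $E$ is nilpotent with finite $\pi_1 E$ and finitely many finitely generated homotopy groups, I would first refine its Postnikov tower to a finite tower of principal fibrations $K(\pi_n, n) \to E_{[n]} \to E_{[n-1]}$, using nilpotence to split the $\pi_1$-action into finitely many central stages. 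Applying $[B\fieldp^t, -]$ to each fibration sequence $K(\pi_n,n) \to E_{[n]} \to E_{[n-1]} \xrightarrow{\theta_n} K(\pi_n, n+1)$ produces, at each stage, an action of $H^n(B\fieldp^t; \pi_n)$ on $[B\fieldp^t, E_{[n]}]$ whose orbits are precisely the nonempty fibres of the projection to $[B\fieldp^t, E_{[n-1]}]$, the image of that projection being the kernel of the obstruction $(\theta_n)_* : [B\fieldp^t, E_{[n-1]}] \to H^{n+1}(B\fieldp^t; \pi_n)$.

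The key numerical input is the growth of $H^n(B\fieldp^t; A)$ for a finitely generated abelian group $A$ and $n \geq 1$. I would record that $\log_p |H^n(B\fieldp^t; A)|$ is $\Theta(t^n)$ if $A$ has $p$-torsion, is $\Theta(t^{n-1})$ if $A$ is torsion-free of positive rank with no $p$-torsion, and vanishes if $A$ is finite of order prime to $p$. The first follows from $\dim_{\fieldp} H^n(B\fieldp^t; \fieldp) = \binom{n+t-1}{n} \sim t^n/n!$ together with the Bockstein; the second from the isomorphism $H^n(B\fieldp^t; \zed) \cong H^{n-1}(B\fieldp^t; \zed/p^\infty)$ valid for $n > 1$ (as $H^{>0}(B\fieldp^t; \mathbb{Q}) = 0$), whose right-hand side has degree $n-1$; and the third from the vanishing of reduced mod-prime-to-$p$ cohomology of a $p$-group. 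The upper bound is then immediate: telescoping the stagewise inequality $|[B\fieldp^t, E_{[n]}]| \leq |[B\fieldp^t, E_{[n-1]}]| \cdot |H^n(B\fieldp^t;\pi_n)|$ gives $\log_p |\gtop E(\fieldp^t)| \leq \sum_n \log_p |H^n(B\fieldp^t; \pi_n)|$, a polynomial whose dominant term, contributed by $n = k$, has degree exactly $d$; passing to leading coefficients and rounding up yields $\log_p|\gtop E(\fieldp^t)| \lesssim C t^d$ with $C \in \nat$.

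For the lower bound I would restrict attention to the fibre of the top stage over the basepoint class of $[B\fieldp^t, E_{[k-1]}]$, which is liftable (the constant map lifts) and hence nonempty. This fibre is the orbit of the basepoint lift under $H^k(B\fieldp^t; \pi_k)$, so its cardinality equals $|H^k(B\fieldp^t;\pi_k)|$ divided by the stabilizer, which is the image of $\partial : [B\fieldp^t, \Omega E_{[k-1]}] \to H^k(B\fieldp^t; \pi_k)$ induced by $\Omega \theta_k$. Since $\Omega E_{[k-1]}$ carries homotopy only in degrees $\leq k-2$, the set $[B\fieldp^t, \Omega E_{[k-1]}]$ is assembled from the groups $H^m(B\fieldp^t; \pi_{m+1})$ with $m \leq k-2$, all of $\log_p$-size of degree $\leq k-2$; hence the stabilizer has $\log_p$-size of degree strictly below $d$ (degree $k$ in the $p$-torsion case, degree $k-1$ otherwise), and the orbit retains $\log_p$-size $\Theta(t^d)$, giving $\gtrsim c t^d$ with $c \in \nat$. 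I expect the main obstacle to be exactly this control of the indeterminacy: one must verify that the stabilizer --- equivalently the image of the looped $k$-invariant --- genuinely has strictly smaller polynomial degree than the main term, so that the quotient does not collapse. This is where the hypothesis that $k$ is the \emph{top} degree carrying $p$-primary homotopy becomes essential, since it forces every competing contribution to originate from a strictly lower Postnikov stage.
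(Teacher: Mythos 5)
First, a point of order: the paper does not prove this statement --- it is quoted from Grodal \cite[Theorem 3.3]{G} --- so there is no internal proof to compare yours against; what follows assesses your reconstruction on its own terms. Your obstruction-theoretic route is the natural one, and the upper bound is essentially sound: your asymptotics for $\log_p|H^n(B\fieldp^t;A)|$ (namely $\Theta(t^n)$ when $A$ has $p$-torsion, $\Theta(t^{n-1})$ for positive rank without $p$-torsion, $0$ otherwise) are correct, and telescoping the orbit count up a principal refinement gives $O(t^d)$. One caveat: after refinement the coefficient groups are the subquotients $\Gamma^i\pi_n/\Gamma^{i+1}\pi_n$ of the nilpotent $\pi_1$-action, not $\pi_n$ itself, so your displayed sum should run over those, and you must check that the dichotomy defining $d$ survives the refinement, i.e.\ that some level-$k$ subquotient has $p$-torsion if and only if $\pi_k$ does. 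This is exactly where nilpotence and finiteness of $\pi_1$ are used: a finite-order unipotent automorphism of a finitely generated free abelian group is the identity, so $\Gamma^2\pi_k$ already lies in the torsion of $\pi_k$ and the refinement creates no spurious $p$-torsion (hence no spurious $t^k$ term). Without nilpotence this fails, e.g.\ for $\zed/2$ acting by $-1$ on $\zed$.

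The genuine gap is in the lower bound. You estimate the fibre of $[B\fieldp^t,E_{[k]}]\rightarrow[B\fieldp^t,E_{[k-1]}]$ over the basepoint, treating this as a single principal stage with untwisted coefficients $\pi_k$; when $\pi_1$ acts nontrivially this stage is not principal and must itself be refined into several stages, and --- more seriously --- the large orbit you produce lives at an intermediate stage of the tower, and you never explain why it lifts to, let alone injects into, $[B\fieldp^t,E]$: the obstructions to lifting through the remaining level-$k$ stages land in groups $H^{k+1}(B\fieldp^t;-)$, which are enormous and could a priori annihilate almost all of your orbit. The repair is to run the orbit argument at the \emph{last} stage of the refined tower whose fibre $K(A,n)$ has $A\otimes\zed_{(p)}\neq 0$: every stage above it has coefficients finite of order prime to $p$ and therefore induces a bijection on $[B\fieldp^t,-]$, so that orbit genuinely sits inside $[B\fieldp^t,E]$. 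One then checks that this stage lies at level $k$ and realizes degree $d$ (the rank of $\pi_k$ is concentrated in the first level-$k$ subquotient, the $p$-torsion, if any, in some later one), and re-examines the indeterminacy $[B\fieldp^t,\Omega(\mathrm{base})]$: when $d=k$ the base may now carry homotopy in degree $k$, so the indeterminacy is only $O(t^{k-1})$ rather than $O(t^{k-2})$, which still suffices; when $d=k-1$ the relevant stage is the first one at level $k$, its base is truncated at level $k-1$, and your $O(t^{k-2})$ bound stands. Your closing remark correctly identifies the control of the indeterminacy as the danger point, but the argument as written does not yet close it.
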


\begin{rem}
 This should be compared with the argument used in the proof of \cite[Theorem 
0.1]{LS}, using the fact (see Theorem \ref{thm:Lannes_g_gtop}) that, under 
suitable hypotheses upon the topological space $E$,  
  cohomology induces an isomorphism of presheaves 
   $
   \gtop E \stackrel{\cong}{\rightarrow} \g H^* E.
  $
In this situation, Grodal's theorem implies  that 
$\gamma _{\g H^* E}(t) = O (t^d)$.

However, the arguments of \cite{G,LS} only  provide bounds on the growth 
function; in particular, they do not show that $\g 
H^*E$ is a finite presheaf (see Example \ref{exam:split_rank} below).
\end{rem}

We do, however, have the following:

\begin{cor}
 Let $E$ satisfy the hypotheses of Theorem \ref{thm:grodal} and suppose that 
$\gtop E$ is a finite presheaf. Then $\gtop E$ has degree at least $d$ (for 
$d$ as in the Theorem).
\end{cor}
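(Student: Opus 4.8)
The plan is to read off the lower bound $d$ on the degree of $\gtop E$ by playing the polynomial \emph{upper} bound on the growth function that finiteness supplies against the polynomial \emph{lower} bound coming from Theorem \ref{thm:grodal}.

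First I would note that, since $\gtop E$ is assumed to be a finite presheaf, Proposition \ref{prop:finite_growth_poly}(1) guarantees it takes values in finite sets, so that the growth function $\gamma_{\gtop E} : \nat \rightarrow \reals_{\geq 0}$ is defined and, by its very definition, equals $\log_p |\gtop E (\fieldp^t)|$, which is exactly the function estimated in Theorem \ref{thm:grodal}. Writing $d'$ for the degree of the finite presheaf $\gtop E$, Proposition \ref{prop:finite_growth_poly}(2) then yields the upper bound $\gamma_{\gtop E}(t) = O(t^{d'})$, while the left-hand inequality of Theorem \ref{thm:grodal} furnishes the lower bound $c\, t^d \lesssim \gamma_{\gtop E}(t)$ for the constant $0 < c \in \nat$ appearing there.

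The final step is to deduce $d \leq d'$ by comparing the two estimates. Unwinding the definition of $\lesssim$ (taking, say, $\epsilon = 1$) produces a threshold beyond which $\tfrac{c}{2}\, t^d \leq \gamma_{\gtop E}(t)$, while the $O(t^{d'})$ bound gives $\gamma_{\gtop E}(t) \leq M\, t^{d'}$ for $t$ large and some constant $M$. Chaining these inequalities yields $\tfrac{c}{2}\, t^d \leq M\, t^{d'}$ for all sufficiently large $t$; dividing by $t^{d'}$ and letting $t \rightarrow \infty$ forces $d \leq d'$, which is the assertion that $\gtop E$ has degree at least $d$.

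I do not expect a serious obstacle here, as the argument is a direct comparison of polynomial growth rates. The only point requiring a modicum of care is the asymptotic bookkeeping, namely converting both the $\lesssim$ lower bound and the $O$ upper bound into honest inequalities valid for all $t$ beyond a common threshold; this is entirely routine.
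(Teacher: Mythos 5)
Your argument is correct and is precisely the intended one: the paper states this corollary without proof because it follows immediately by comparing the lower bound $c\,t^d \lesssim \gamma_{\gtop E}(t)$ from Theorem \ref{thm:grodal} with the upper bound $\gamma_{\gtop E}(t) = O(t^{d'})$ from Proposition \ref{prop:finite_growth_poly}, exactly as you do. The asymptotic bookkeeping with $\epsilon = 1$ is handled correctly, so there is nothing to add.
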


\begin{exam}
 \label{exam:split_rank}
The property that $\gamma_X$ is a polynomial function does not imply 
that $X$ is a finite presheaf.

Consider any finite, constant-free functor $0 \neq F \in \f$ of polynomial 
degree $d \geq 2$, so that $\gamma _F (t) =O (t^d)$. 
To be concrete, we take $F= \Lambda^2$ over the field $\fieldt$.

The rank filtration of Proposition \ref{prop:rank_filtration} provides the 
decomposition of the underlying set
\[
 F (V) = \coprod_n F_{\leq n} (V) \backslash F_{\leq n-1} (V)  
\]
and
\[
 F_{\leq n} (V) \backslash F_{\leq n-1} (V) \cong 
 F_\reg (n) \times_{\Aut 
(\fieldt^n)} \Surj (V, \fieldt^n) 
\]
by Lemma \ref{lem:presheaf_rank_filt} and, for $n \geq 1$, the quotient $F_n 
(V) 
 := F_{\leq n} (V) / F_{\leq n-1} (V)$ in presheaves identifies with 
\[
 V \mapsto 
 * \amalg 
 \Big(F_\reg (n) \times_{\Aut 
(\fieldt^n)} \Surj (V, \fieldt^n)\Big).
\]

Consider the connected presheaf 
 $
\tilde{F}:=  \bigvee _{n\geq 1} F_n. 
 $
 For $V \in \ob \fdvs$, by construction,  $\tilde{F}(V)$ has the same 
underlying 
finite set as $F(V)$, but a very different $\End (V)$-structure.

To show that the presheaf $ \tilde{F}$ is not finite, it suffices to show that, 
for $G \in \ob \f$ a finite, constant-free functor, there is no non-trivial map 
from 
$F_n$ to $G$ for $n \gg 0$. 

Now $\hom _{\prshv} (F_n, G) \cong \hom_\f(\fieldt [F_n], G)$ and $\fieldt[F_n]$ 
splits as $\fieldt \oplus \overline{\fieldt [F_n]}$ where $\overline{\fieldt[F_n]} 
(V)=0$ if 
$\dim V < n$, by construction of $F_n$. Since $G$ is constant-free, $\hom_\f 
(\fieldt [F_n], G) \cong \hom_\f (\overline{\fieldt[F_n]}, G)$ and, since $G$ is 
finite, it follows by connectivity  arguments that 
$\hom_\f (\overline{\fieldt[F_n]}, G)=0$ for $n \gg 0$.
\end{exam}

\part{Unstable Hopf algebras and presheaves of $p$-groups} 
\section{Hopf algebras in $\unstalg$}
\label{sect:hopf}

\subsection{Preliminaries}

As usual, $\unstalg$ denotes the category of unstable algebras over the 
mod $p$ Steenrod algebra $\cala$. 

\begin{defn}
 Let $\hopfk$ be the category of cogroup objects in $\unstalg$. 
 
 \begin{enumerate}
  \item 
   An object of $\hopfk$ is a commutative $\fieldp$-Hopf algebra $H$, 
 such that the underlying algebra is an unstable algebra over the Steenrod 
algebra, and the structure morphisms $\Delta : H \rightarrow H \otimes H$ (the 
diagonal or coproduct) and 
 $\chi : H \rightarrow H$ (the conjugation or antipode) are morphisms of modules 
over the 
Steenrod 
algebra.
 \item 
 A morphism $H_1 \rightarrow H_2$ of $\hopfk$ is a morphism of $\fieldp$-Hopf 
algebras that is $\cala$-linear.
 \end{enumerate}
Let $\hopfkc \subset \hopfk$ denote the full subcategory of connected objects 
($H$ such that $H^0 = \fieldp$).
\end{defn}

\begin{rem}
It is not assumed that the coproduct $\Delta$ is cocommutative.
\end{rem}

Here we focus upon connected unstable Hopf algebras. his is not a serious restriction, since  the general case can be  treated by using the following:

\begin{lem}
\label{lem:hopf_K_dim0}
 Let $H \in \ob \hopfk$ be a Hopf algebra concentrated in degree zero with 
$\dim 
H^0 < \infty$. 
 Then 
 $
  H \cong \fieldp ^{\spec H^0},   
 $ 
where $\spec H^0$ is a finite group.
\end{lem}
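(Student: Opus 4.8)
The plan is to exploit the \emph{unstable} algebra structure to force $H^0$ to be étale over $\fieldp$, and then to read off the finite group from the Hopf structure via the standard anti-equivalence with affine group schemes.

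First I would record that, since $H$ is concentrated in degree zero, it coincides with its degree-zero part $H^0$, a finite-dimensional commutative $\fieldp$-algebra. The crucial input is the unstable algebra axiom applied in degree zero. At $p=2$ the instability relation $Sq^{|x|} x = x^2$ specialises, for $x \in H^0$, to $Sq^0 x = x^2$; since $Sq^0 = \mathrm{id}$ this gives $x^2 = x$ for every $x \in H^0$. At an odd prime the same argument with $P^0 = \mathrm{id}$ and the relation $P^{|x|/2} x = x^p$ on even-degree classes yields $x^p = x$ for all $x \in H^0$. Thus in all cases $H^0$ satisfies $x^p = x$ identically.

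Next I would deduce that a finite-dimensional commutative $\fieldp$-algebra $A$ satisfying $x^p = x$ for all $x$ is isomorphic to $\fieldp^N$ for some $N$. Such an $A$ is reduced, since a nilpotent $x$ satisfies $x = x^{p^k}$ for all $k$ and hence $x=0$ once $p^k$ exceeds its nilpotency index; being Artinian and reduced, $A$ splits as a finite product of finite field extensions of $\fieldp$, and the relation $x^p = x$ bounds each factor by the number of roots of $X^p - X$, forcing every factor to be $\fieldp$ itself. Applied to $A = H^0$, this shows that $\spec H^0$ is a finite \emph{discrete} set of $\fieldp$-rational points and that $H^0 \cong \fieldp^{\spec H^0}$ as $\fieldp$-algebras.

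Finally I would reinstate the Hopf structure. Because the coproduct, antipode and counit are morphisms of $\fieldp$-algebras, applying $\spec$ (an anti-equivalence onto affine $\fieldp$-schemes) converts them into a multiplication, inversion and unit exhibiting $\spec H^0$ as a group object in finite $\fieldp$-schemes; since $\spec H^0$ is constant and étale, this is precisely the datum of a finite group $G := \spec H^0$, and the canonical algebra isomorphism $H^0 \cong \fieldp^{G}$ intertwines the two Hopf structures, giving $H \cong \fieldp^{\spec H^0}$. The main obstacle, and the only place where the hypothesis \emph{unstable algebra} (rather than merely finite-dimensional commutative Hopf algebra) is genuinely used, is the relation $x^p = x$ of the first step: without it there is no reason for $H^0$ to be reduced, and non-étale finite group schemes such as $\mu_p = \spec \fieldp[x]/(x^p - 1)$ would supply commutative $\fieldp$-Hopf algebras that are \emph{not} function algebras on any finite group. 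Everything after the étale reduction is the standard correspondence between finite constant group schemes and finite groups.
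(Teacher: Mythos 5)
Your proposal is correct; the paper states this lemma without proof, treating it as a standard fact. Your argument --- using the restriction axiom ($Sq^0 x = x^2$ at $p=2$, resp.\ $P^0 x = x^p$ at odd primes) to force $x^p = x$ on $H^0$, deducing $H^0 \cong \fieldp^N$ since a finite-dimensional reduced $\fieldp$-algebra satisfying $x^p=x$ is split, and then dualizing the (not necessarily cocommutative) coproduct, counit and antipode to a group structure on the finite discrete set $\spec H^0$ --- is precisely the standard reasoning the paper implicitly relies on, and it correctly identifies instability as the input that rules out non-\'etale examples such as $\fieldp[x]/(x^p-1)$.
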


The following is a key fact:

\begin{prop}
\label{prop:hopf_A_fg}
 Let $H \in \ob \hopfkc$ and $K \subset H$ be a sub unstable algebra such that 
$K$ 
is $\cala$-finitely generated. Then there exists $H_K \subset H$ in $\hopfkc$ 
such that 
 \begin{enumerate}
  \item 
  $H_K$ is $\cala$-finitely generated as an unstable algebra;
  \item 
  $K \subset H_K$ as unstable algebras.
 \end{enumerate}
\end{prop}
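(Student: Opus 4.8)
The plan is to build $H_K$ as the sub-Hopf algebra of $H$ generated by $K$, and then show that this sub-Hopf algebra is again $\cala$-finitely generated. The key mechanism is that the coproduct $\Delta : H \rightarrow H \otimes H$ and the conjugation $\chi$ are both $\cala$-linear algebra maps, so that closing up $K$ under these operations keeps us inside $\unstalgc$ and inside the world of unstable modules; the delicate point is to do the closure \emph{finitely}, so that $\cala$-finite generation is preserved.

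First I would choose a finitely generated unstable module $M \subset K$ together with a surjection of unstable algebras $UM \twoheadrightarrow K$, as provided by Definition \ref{def:A_fg}; since $M$ is finitely generated over $\cala$, it suffices to control the images of a finite set of module generators under the iterated coproduct and antipode. Next I would form, for each such generator $x$ (lying in some finite degree), the $\cala$-submodule of $H$ generated by all the tensor factors appearing in $\Delta x$, $\chi x$, and their finite iterates. Here the essential finiteness input is that $H \in \ob \hopfkc$ is connected, so that $\Delta$ is conilpotent in the usual sense: for a homogeneous element of degree $d$, the reduced coproduct $\overline{\Delta}$ raises coproduct-length while the total degree is fixed, so only finitely many iterates of $\overline{\Delta}$ can be nonzero. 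This forces the "coproduct closure" of a single generator to involve only finitely many homogeneous elements of $H$, each of bounded degree, and hence to be captured by a finitely generated unstable submodule.

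I would then assemble $H_K$ as the sub unstable algebra of $H$ generated by the $\cala$-submodule $N \subset H$ obtained by taking $M$ together with all the finitely many tensor factors produced by the coproduct/antipode closure described above. By construction $N$ is a finitely generated $\cala$-module containing $M$, so $H_K := \mathrm{image}(UN \rightarrow H)$ is $\cala$-finitely generated as an unstable algebra (establishing (1)), and $K \subset H_K$ since $M \subset N$ (establishing (2)). It remains to verify that $H_K$ is closed under $\Delta$ and $\chi$, i.e.\ is genuinely a sub-object of $\hopfkc$: since $N$ was chosen to contain all coproduct components of its own generators, $\Delta$ and $\chi$ carry the algebra generators of $H_K$ into $H_K \otimes H_K$ and $H_K$ respectively, and then multiplicativity of $\Delta$ and anti-multiplicativity of $\chi$ propagate this to all of $H_K$.

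The main obstacle I expect is the finiteness of the coproduct closure: a priori, iterating $\Delta$ could introduce ever more new elements of $H$, and one must argue that this process terminates after finitely many steps while keeping the accumulated $\cala$-module finitely generated. The crucial leverage is connectivity of $H$ together with the grading, which bounds both the number of relevant iterates of the reduced coproduct and the degrees in which new tensor factors can appear; combined with the fact that an $\cala$-submodule of $H$ generated in finitely many bounded degrees by finitely many elements is finitely generated, this should close the argument. A secondary technical point is to ensure that $H_K$ inherits the \emph{connected} Hopf structure rather than merely a bialgebra structure, but this is automatic since $H_K^0 = \fieldp$ (as $H_K \subset H$ and $H \in \ob \hopfkc$), and the antipode of $H$ restricts to $H_K$ by the closure condition.
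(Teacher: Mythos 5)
Your proof is correct and follows essentially the same route as the paper: the paper embeds a finite generating subspace $V_K$ of $K$ into a finite-dimensional sub-coalgebra $C_K \subset H$ (citing this as a standard fact whose proof is straightforward in the graded connected setting) and takes $H_K$ to be the sub unstable algebra generated by $C_K$, with $\cala$-linearity of $\Delta$ giving coproduct-stability and connectedness making antipode-stability automatic via \cite{MM}. Your conilpotence/degree-induction argument for the finiteness of the coproduct closure is precisely the ``straightforward proof'' of that standard fact which the paper leaves implicit.
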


\begin{proof}

By hypothesis, there exists a finite graded vector subspace $V_K \subset 
\overline{K}$ such that the induced morphism of unstable algebras 
  $
  U (F (V_K)) 
  \rightarrow 
  K
 $ 
is surjective, where $F (V_K)$ is the free unstable module on $V_K$. 

The vector space $V_K$ is contained within a finite-dimensional sub 
$\fieldp$-coalgebra $C_K \subset H$. (This is a standard fact; in this graded 
connected setting, the proof is straightforward.) Let $H_K$ denote the 
sub unstable algebra of $H$ generated by $C_K$; by construction, this 
contains $K$. Moreover, since the coproduct is $\cala$-linear,  it is 
straightforward to check that $H_K$ is stable under the coproduct. In the 
connected setting, stability under the conjugation is automatic \cite{MM}, 
hence $H_K$ is an object of $\hopfkc$, as required. 
\end{proof}

Recall  that cokernels exist in $\hopfkc$ (see \cite{MS1,MS2} for example). In 
particular, 
if $K \hookrightarrow H$ is a monomorphism of $\hopfkc$, 
then the cokernel $H \twoheadrightarrow H \hq K$ has underlying unstable 
algebra 
given by the pushout in $\unstalg$:
\[
 \xymatrix{
 K
 \ar[r]
 \ar[d]
 \po
 &
 H 
 \ar[d]
 \\
 \fieldp 
 \ar[r]
 &
 H \otimes_K \fieldp.
 }
\]

\begin{lem}
\label{lem:ses_Hopf_group}
Let $H \in \ob \hopfkc$ such that $\g 
H$ takes values in finite sets. 
Then the Hopf algebra structure of $H$ induces a natural group structure on $\g 
H$, so that 
$\g H$ takes values in finite groups. 

Let $H' \subset H$ be a sub Hopf algebra in $\hopfkc$ and consider the 
associated 
sequence $H' \hookrightarrow H \twoheadrightarrow H\hq H'$. This induces 
a short exact sequence of finite groups:
 \[
  \g (H\hq  H') \stackrel{\lhd}{\hookrightarrow} \g H \twoheadrightarrow \g H'
 \]
 \end{lem}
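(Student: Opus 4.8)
The plan is to establish the two assertions in turn: first that $\g H$ inherits a group structure making it a presheaf of finite groups, then that a short exact sequence of Hopf algebras induces a short exact sequence of these groups. For the first part, I would exploit the fact that $\g$ is a contravariant functor $\unstalg\op \rightarrow \gps$. Since $H$ is a cogroup object in $\unstalg$, applying the representable-style functor $\g = \hom_{\unstalg}(-, H^*(B-))$ should turn the cogroup structure maps $\Delta$, $\chi$, and the counit into group structure maps on $\g H(V)$ for each $V$. Concretely, the coproduct $\Delta : H \rightarrow H \otimes H$ induces a multiplication
\[
 \g H(V) \times \g H(V) \cong \hom_{\unstalg}(H\otimes H, H^*(BV)) \rightarrow \hom_{\unstalg}(H, H^*(BV)) = \g H(V),
\]
using that $\hom_{\unstalg}(H\otimes H, R)\cong \hom_{\unstalg}(H,R)\times\hom_{\unstalg}(H,R)$ because $H^*(BV)$ is a commutative unstable algebra and $\otimes$ is the coproduct in commutative algebras. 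The conjugation $\chi$ gives inversion and the counit gives the identity; the Hopf axioms translate directly into the group axioms. Naturality in $V$ is automatic since all maps are induced by functoriality, so $\g H$ is a presheaf of groups, and by hypothesis each $\g H(V)$ is finite.

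For the second part, the key input is that $\g$, when restricted through the nilpotent localization, carries good exactness properties, together with the description of cokernels in $\hopfkc$ recalled just before the statement. The short exact sequence $H' \hookrightarrow H \twoheadrightarrow H\hq H'$ has $H\hq H'$ with underlying unstable algebra the pushout $H \otimes_{H'} \fieldp$. I would apply $\g$ and argue that $\g$ sends this pushout in $\unstalg$ to the corresponding pullback of sets: that is, $\g(H\hq H')(V)$ is the fibre of $\g H(V) \rightarrow \g H'(V)$ over the basepoint (the composite $H' \hookrightarrow H$ followed by the augmentation). Identifying $\g(H\otimes_{H'}\fieldp)(V) = \hom_{\unstalg}(H\otimes_{H'}\fieldp, H^*(BV))$ with the subset of $\g H(V)$ of morphisms that restrict trivially on $H'$ gives exactly the kernel of the homomorphism $\g H \rightarrow \g H'$. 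Here I must check that $\g$ preserves this particular pushout; this follows because $\hom_{\unstalg}(-, H^*(BV))$ sends pushouts to pullbacks whenever the relevant diagrams behave well, which in the connected setting with $H^* (BV)$ a commutative unstable algebra is the standard adjunction computation.

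It then remains to check that $\g H \twoheadrightarrow \g H'$ is a surjection of groups with the expected normality, i.e. that the displayed sequence
\[
 \g(H\hq H') \stackrel{\lhd}{\hookrightarrow} \g H \twoheadrightarrow \g H'
\]
really is short exact with normal kernel. Surjectivity of $\g H \rightarrow \g H'$ comes from the injectivity of $H' \hookrightarrow H$: a morphism $H' \rightarrow H^*(BV)$ should extend, at least after passage to the nilpotent localization, which is where I would invoke the exactness properties of $\g$ that descend from $\unst/\nil$; since $\g$ already factors through $\unstalg/\nil$ (Proposition \ref{prop:g_factors_nil}), the relevant $F$-surjectivity becomes honest surjectivity after applying $\g$. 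Normality of the kernel is forced by the fact that it is precisely the set of homomorphisms trivial on $H'$, which is visibly a subgroup stable under conjugation by the image of any element. The main obstacle I anticipate is the exactness step: verifying that $\g$ converts the Hopf-algebraic quotient sequence into an exact sequence of groups requires care, since $\g$ is contravariant and a priori only half-exact, so I would need to lean on the faithfully-flat or free-module descent properties of connected Hopf algebras (as in \cite{MM}) to guarantee the surjectivity and the exactness at the middle term.
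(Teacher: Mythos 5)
Your proposal is correct and follows essentially the same route as the paper: the group structure is obtained formally from the cogroup structure together with $\g (K \otimes L) \cong \g K \times \g L$, and the exactness of the sequence comes from the pushout description of $H \hq H'$ (so that $\g (H\hq H')$ is the fibre of $\g H \to \g H'$ over the basepoint) plus naturality of the group structure. The one step you hedge on --- injectivity of $\g (H\hq H') \to \g H$ and surjectivity of $\g H \to \g H'$ --- is precisely where the paper cites \cite[Corollary II.1.4]{HLS}; be aware that Milnor--Moore freeness of $H$ as an $H'$-module would not by itself yield the extension of algebra maps to $H^*(BV)$, so the Lannes--Henn--Lannes--Schwartz injectivity input you mention is genuinely the needed ingredient there.
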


\begin{proof}
 The first statement is a formal consequence of the natural isomorphism $\g (K 
\otimes L) \cong \g K \times \g L$, for $K, L \in \unstalg$ and the fact that 
$H$ is a cogroup object in $\unstalg$. 
 
The underlying sequence of 
pointed sets
\[
  \g (H\hq  H') \stackrel{}{\hookrightarrow} \g H \twoheadrightarrow \g H'
 \]
is  exact as pointed sets, since the underlying unstable algebra of $H\hq H'$ 
is 
$H \otimes_{H'} \fieldp$. The morphisms are respectively injective and 
surjective as morphisms of pointed sets, by \cite[Corollary II.1.4]{HLS}. In 
particular, both $\g H'$ and $\g 
(H\hq H')$ take values in finite sets.  

The naturality of the group structure implies that these are group morphisms; 
the result follows.
\end{proof}

\subsection{The primitive filtration}

Recall that, if $H$ is a connected Hopf algebra with augmentation ideal 
$\overline{H}$, then the module of  primitives $PH$ is 
 the kernel of the reduced diagonal 
\[
 \overline{H}
 \stackrel{\overline{\Delta}}{\rightarrow }
  \overline{H} \otimes  \overline{H}.
\]

\begin{lem}
\label{lem:UPH_H}
 For $H \in \ob \hopfkc$, the module of primitives $PH \subset H$ is a 
non-trivial sub unstable module and the canonical morphism 
 \[
  U PH \rightarrow H,
 \]
where $U : \unst \rightarrow \unstalg $ is the  enveloping 
algebra functor, is a morphism of $\hopfkc$ when $U PH$ is equipped with the 
primitively-generated Hopf algebra structure. 

Moreover, this morphism is injective; $UPH$ is the largest 
primitively-generated sub Hopf algebra of $H$.
\end{lem}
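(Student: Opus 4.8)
The plan is to establish the statement in three parts, corresponding to the three assertions: that $PH$ is a non-trivial sub unstable module, that the canonical map $UPH \to H$ is a morphism of $\hopfkc$, and that it is injective with the stated maximality property. First I would verify that $PH$ is a sub unstable module: the reduced diagonal $\overline{\Delta}$ is a morphism of unstable modules (since $\Delta$ is $\cala$-linear by the definition of $\hopfkc$), so its kernel $PH$ is an unstable submodule of $H$. For non-triviality, I would argue that, since $H$ is connected, $\overline{H}$ is concentrated in positive degrees; taking any element of least positive degree in $\overline{H}$, its image under $\overline{\Delta}$ lands in $(\overline{H}\otimes\overline{H})$, which vanishes in that degree by connectivity, so this element is primitive. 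Hence $PH \neq 0$.

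Next I would construct the morphism $UPH \to H$. The inclusion $PH \hookrightarrow H$ is a morphism of unstable modules into the unstable algebra $H$; by the universal property of the Steenrod--Epstein enveloping algebra functor $U$ (its left adjointness to the forgetful functor $\unstalg \to \unst$), this extends uniquely to a morphism of unstable algebras $UPH \to H$. One then equips $UPH$ with its primitively-generated Hopf algebra structure, in which the generators in $PH$ are declared primitive; I would check that the map $UPH \to H$ respects the coproduct, which reduces to checking it on the generating set $PH$, where it holds by definition since the elements of $PH$ map to primitives of $H$. Compatibility with the conjugation is then automatic in the connected setting, as noted in the proof of Proposition \ref{prop:hopf_A_fg} using \cite{MM}, so $UPH \to H$ is a morphism of $\hopfkc$.

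For injectivity, I would invoke the structure theory of connected Hopf algebras: the Milnor--Moore type results \cite{MM} guarantee that a primitively-generated connected Hopf algebra embeds via the map determined by its primitives, provided the primitives inject into $H$, which they do by construction since $PH \subset H$. The essential point is that the restriction of $UPH \to H$ to primitives is the identity on $PH$, and a morphism of connected Hopf algebras that is injective on primitives is injective. For the maximality claim, I would argue that any primitively-generated sub Hopf algebra $H' \subset H$ has its primitives $PH' \subset PH$ (primitivity is inherited from $H$), so $H'$ is generated as an algebra by a submodule of $PH$, whence $H' \subset UPH$ inside $H$.

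The main obstacle I anticipate is the injectivity step: establishing that the algebra generated by the primitives embeds requires care, since one must rule out relations in $UPH$ becoming non-trivial kernel elements. The cleanest route is to cite the relevant structure theorem for connected graded Hopf algebras over $\field_p$ (the analogue of Milnor--Moore in the unstable setting), but one should confirm that the hypotheses of that theorem — connectedness and the $\cala$-linearity of the structure maps — are genuinely in force here, and that commutativity of $H$ as an algebra (assumed throughout for objects of $\hopfk$) suffices; this is where I would spend most of the effort, rather than on the formal adjunction arguments.
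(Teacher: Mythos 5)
Your overall approach coincides with the paper's, which simply declares the lemma standard and isolates the injectivity step; your treatment of the unstable module structure of $PH$, the adjunction giving $UPH \to H$, the verification of the coproduct on generators, the automatic antipode compatibility, and the maximality argument are all fine. The one substantive point you gloss over is exactly the one the paper's proof singles out: the identification $PH \cong P(UPH)$. The Milnor--Moore criterion you invoke (a morphism of connected Hopf algebras, or even of connected coalgebras, that is injective on primitives is injective) requires injectivity on \emph{all} primitives of the source $UPH$, whereas you only verify it on the submodule $PH \subseteq P(UPH)$. In characteristic $p$ this is a genuine issue for a generic primitively generated Hopf algebra: if $x$ is primitive then so is $x^p$, so the primitives typically strictly contain the generating module, and your phrase ``the restriction of $UPH \to H$ to primitives is the identity on $PH$'' silently assumes $P(UPH)=PH$. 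In the unstable setting this is rescued by the instability relation: for $x \in PH$ the $p$-th power (the square at $p=2$) coincides in $UPH$ with the top Steenrod power of $x$, which already lies in $PH$ because $PH$ is an unstable submodule; hence $P(UPH)\cong PH$ and the criterion applies. The paper phrases the same argument more directly: the kernel of $UPH \to H$ is a Hopf ideal of a connected Hopf algebra, so a nonzero element of minimal degree in it is primitive, and $P(UPH)\cong PH \hookrightarrow H$ is injective, forcing the kernel to vanish. You correctly anticipated that injectivity is where the effort lies; the missing sentence is the justification of $P(UPH)\cong PH$. (A trivial caveat: your non-triviality argument for $PH$ presumes $\overline{H}\neq 0$, i.e.\ $H \neq \fieldp$, which is the intended reading.)
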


\begin{proof}
This is standard. The injectivity statement follows from the natural 
identification $PH \cong P U PH$ together with the fact that a non-zero element 
of minimal degree in the kernel is necessarily primitive.
\end{proof}

As usual, one has the primitive filtration of an object of $\hopfkc$ (cf. 
\cite{MS1}, for example).

\begin{defn}
For $H \in \ob \hopfkc$, recursively define the natural sequence of quotients 
in $\hopfkc$:
\[
 \xymatrix{
 H 
 \ar@{=}[d]
 \ar[rd] 
 \ar[rrd]
 \ar@{.>}[rrrd]
 \\
 H_0 
 \ar@{->>}[r]
 &
 H_1 
 \ar@{->>}[r]
 &
 H_2 
 \ar@{.>>}[r]
 &
 \ldots 
 }
\]
where, for each $n\in \nat$, $H_{n+1}$ is the cokernel in $\hopfkc$ of $U 
PH_{n} \hookrightarrow H_{n}$.
\end{defn}

\begin{lem}
\label{lem:prim_filt}
 For $H \in\ob \hopfkc$, 
 \begin{enumerate}
  \item 
  $lim_\rightarrow H_n = \fieldp$; 
  \item 
  if $H$ is $\cala$-finitely generated (as an unstable algebra), $H_n = \fieldp$ 
for $n \gg 0$.
 \end{enumerate}
\end{lem}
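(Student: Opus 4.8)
The plan is to prove the two parts separately, reducing each to the behaviour of the indecomposables $QH_n$ and of the connectivity under the operation $H_n \rightsquigarrow H_{n+1} = H_n \hq UPH_n$.

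For part (1) I would first record the standard observation that in a connected graded Hopf algebra an element of minimal positive degree is primitive: its reduced coproduct lands in $\overline{H}\otimes\overline{H}$, which in a given degree is a sum of tensor factors of strictly smaller positive degree, hence vanishes in the minimal degree. Writing $d_n$ for the least positive degree in which $H_n$ is nonzero, this shows that all of $\overline{H_n}^{d_n}$ is primitive, so it lies in $PH_n$ and is annihilated in $H_{n+1}$; indeed the degree-$d_n$ part of the ideal generated by $\overline{UPH_n}$ is exactly $PH_n^{d_n}$, since products contribute only in degrees $\geq 2d_n$. Thus $d_{n+1}>d_n$, the connectivity tends to infinity, and in each fixed degree the tower $\{H_n^d\}$ is eventually zero. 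As filtered colimits of Hopf algebras are computed degreewise on underlying vector spaces, this gives $\varinjlim H_n = \fieldp$.

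For part (2) the key is to pass to indecomposables. Since the indecomposable functor $Q$ is a left adjoint (to the trivial-algebra functor) it preserves the pushout defining $H_n\hq UPH_n$, and $Q(UPH_n)=PH_n$ by Lemma \ref{lem:UPH_H}, so $QH_{n+1}\cong \mathrm{coker}(PH_n\to QH_n)$ for the natural primitive-to-indecomposable map; in particular the $QH_n$ form a tower of quotient unstable modules of $QH$, all kernels being $\cala$-submodules. Because $Q$ preserves filtered colimits, part (1) yields $\varinjlim QH_n = Q(\varinjlim H_n)=Q\fieldp = 0$. Now I invoke finite generation: by Remark \ref{rem:cala_fg_conn}, $H\in\ob\hopfkc$ being $\cala$-finitely generated means $QH$ is a finitely generated $\cala$-module. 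Choosing finitely many generators, each dies at some finite stage by the vanishing of the colimit, hence all die by a common stage $N$; since they generate $QH$ over $\cala$ and $\ker(QH\to QH_N)$ is an $\cala$-submodule, $QH_N=0$. Finally graded Nakayama ($\overline{H_N}=\overline{H_N}^2$ forces $\overline{H_N}=0$ by connectivity) gives $H_N=\fieldp$, whence $H_n=\fieldp$ for all $n\geq N$.

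The main obstacle is precisely the upgrade in (2) from vanishing in the colimit to vanishing at a finite stage: the colimit computation alone places no bound on the number of steps, and I would want to avoid appealing to any Noetherian property of $\unst$, which is delicate. The clean resolution is the identity $QH_{n+1}\cong\mathrm{coker}(PH_n\to QH_n)$, which converts the problem into a chain of $\cala$-submodules of the \emph{finitely generated} module $QH$ and lets finite generation finish it directly. I would verify the two ingredients of this identity with care — that $Q$ preserves the pushout defining $H_n\hq UPH_n$ and that $Q(UPH_n)=PH_n$ — since the whole argument for (2) rests on them.
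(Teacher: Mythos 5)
The paper itself dismisses this lemma with the single word ``straightforward,'' so there is no official argument to compare against; your write-up is a reasonable expansion and the overall strategy (connectivity increases strictly at each stage, hence the colimit is $\fieldp$ degreewise; then pass to indecomposables and use finite generation of $QH$ over $\cala$ to get vanishing at a finite stage) is sound. However, one of the two ``ingredients'' you single out for verification is false as stated: $Q(UM)\neq M$ in general. For example $UF(1)\cong\fieldp[u]=H^*(B\zed/2)$ at $p=2$, so $Q(UF(1))$ is one-dimensional, concentrated in degree $1$, whereas $F(1)$ is nonzero in every degree $2^i$; the point is that the restriction axiom $\mathrm{Sq}^{|x|}x=x^2$ turns $\mathrm{Sq}_0 x$ into a decomposable, so $Q(UM)$ is a proper quotient of $M$ whenever $\mathrm{Sq}_0$ acts nontrivially. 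Your argument survives because all you need is that $PH_n\rightarrow Q(UPH_n)$ is \emph{surjective} (i.e.\ $UPH_n$ is generated as an algebra by $PH_n$), so that the image of $Q(UPH_n)$ in $QH_n$ coincides with the image of $PH_n$ and the cokernel identity $QH_{n+1}\cong\mathrm{coker}(PH_n\to QH_n)$ still holds. Relatedly, be careful with the assertion that $Q$ is left adjoint to the trivial-algebra functor \emph{on $\unstalg$}: the square-zero extension $\fieldp\oplus M$ satisfies the restriction axiom only when $\mathrm{Sq}_0$ acts trivially on $M$, so it is cleaner to bypass the adjunction and check the identity directly at the level of ideals: $H_{n+1}=H_n/(PH_n\cdot H_n)$ and $PH_n\cdot H_n\subset PH_n+\overline{H_n}^2$, whence $QH_{n+1}=QH_n/\mathrm{im}(PH_n)$. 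With these two repairs the proof of (2) goes through exactly as you describe, the last step being graded Nakayama as you say.
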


\begin{proof}
 Straightforward.
\end{proof}

The nilpotent filtration behaves well when working modulo nilpotents, due to 
the following Lemma: 

\begin{lem}
\label{lem:prim_mod_nil}
 Let $H \rightarrow H'$ be a morphism of $\hopfkc$ such that the underlying 
morphism of unstable algebras is an $F$-isomorphism (equivalently induces an 
isomorphism in $\unst/ \nil$). Then 
 the induced map $PH \rightarrow PH'$ is an isomorphism in $\unst / \nil$. 
\end{lem}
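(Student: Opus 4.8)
The plan is to exhibit the primitives as a kernel and to transport that description through the nillocalization $L \colon \unst \to \unst/\nil$, using that $L$ is exact and that $\nil$ is a tensor ideal. Write $\phi \colon \overline{H} \to \overline{H'}$ for the $\cala$-linear map of augmentation ideals underlying $H \to H'$; since a connected unstable algebra splits as $\field_p \oplus \overline{H}$ in $\unst$, the hypothesis that $H \to H'$ is an isomorphism in $\unst/\nil$ gives at once that $\phi$ is an isomorphism in $\unst/\nil$. By definition $PH = \ker\big(\overline{\Delta}\colon \overline{H} \to \overline{H}\otimes\overline{H}\big)$, and naturality of the reduced diagonal under morphisms of $\hopfkc$ yields a commutative square in $\unst$
\[
\xymatrix{
\overline{H} \ar[r]^-{\overline{\Delta}} \ar[d]_{\phi} & \overline{H}\otimes\overline{H} \ar[d]^{\phi\otimes\phi} \\
\overline{H'} \ar[r]^-{\overline{\Delta}} & \overline{H'}\otimes\overline{H'}.
}
\]
The whole argument then reduces to showing that, after applying $L$, both vertical arrows become isomorphisms; the induced map on kernels is, by naturality, exactly $L$ of the canonical map $PH \to PH'$.

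First I would record two standard structural facts. The localization $L$ is exact, so it commutes with the formation of the kernels appearing above. Second, $\nil$ is a tensor ideal: $N \otimes M \in \nil$ whenever $N \in \nil$ and $M \in \ob\unst$. This is where Lannes' $T$-functor enters, since $T_V$ is exact and symmetric monoidal and preserves non-negative gradings; as membership in $\nil$ is detected by the vanishing of $(T_V N)^0$ for all $V$, one gets $(T_V(N \otimes M))^0 \cong (T_V N)^0 \otimes (T_V M)^0 = 0$, whence $N \otimes M \in \nil$.

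Given these, I would show that $\phi \otimes \phi$ is an isomorphism in $\unst/\nil$. Factor $\phi \otimes \phi = (\phi \otimes 1)\circ(1 \otimes \phi)$. Since tensoring over $\field_p$ is exact, the kernel and cokernel of $\phi \otimes 1$ are $(\ker\phi)\otimes\overline{H}$ and $(\operatorname{coker}\phi)\otimes\overline{H}$; as $\ker\phi$ and $\operatorname{coker}\phi$ lie in $\nil$ (being the kernel and cokernel of a $\nil$-isomorphism) and $\nil$ is a tensor ideal, both these modules lie in $\nil$, so $\phi \otimes 1$ is a $\nil$-isomorphism; the same argument applies to $1 \otimes \phi$. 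Hence $L(\phi \otimes \phi)$ is an isomorphism, as of course is $L(\phi)$.

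Finally, applying the exact functor $L$ to the square above makes both vertical maps isomorphisms, so the induced comparison of kernels $L(PH) = \ker L(\overline{\Delta}) \to \ker L(\overline{\Delta}) = L(PH')$ is an isomorphism, and by naturality it is $L$ applied to $PH \to PH'$. This proves the claim. The one genuinely load-bearing point—and hence the step to get right—is the tensor-ideal property of $\nil$ combined with exactness of the localization; once these are in place, the rest is naturality of $\overline{\Delta}$ and exactness of $\otimes$ over the field.
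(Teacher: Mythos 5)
Your proof is correct and follows essentially the same route as the paper: exhibit $PH$ as the kernel of the reduced diagonal, observe that after applying the exact localization $\unst \rightarrow \unst/\nil$ the two right-hand vertical arrows of the resulting commutative diagram become isomorphisms, and conclude for the induced map on kernels. The only difference is that the paper leaves implicit the tensor-ideal property of $\nil$ (needed to see that $\phi \otimes \phi$ becomes an isomorphism), which you justify explicitly and correctly via the exactness and monoidality of Lannes' $T$-functor.
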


\begin{proof}
 By definition, $PH \rightarrow PH'$ fits into a commutative diagram in $\unst$:
 \[
  \xymatrix{
  0 
  \ar[r]
  &
  PH 
  \ar[d]
  \ar[r]
  &
  \overline{H} 
  \ar[d]
  \ar[r]^{\overline{\Delta}}
  &
    \overline{H} \otimes \overline{H}
    \ar[d]
    \\
    0 
  \ar[r]
  &
  PH' 
  \ar[r]
  &
  \overline{H'} 
  \ar[r]_{\overline{\Delta}}
  &
    \overline{H'} \otimes \overline{H'}
    \\
  }
 \]
in which the rows are exact. After passage to $\unst/ \nil$, the two right hand 
vertical arrows are isomorphisms, by the hypothesis, hence so is the left hand 
one. 
\end{proof}

\begin{prop}
\label{prop:hopf_F_iso_primitive_filt}
 Let $H \rightarrow H'$ be a morphism of $\hopfkc$ such that the underlying 
morphism of unstable algebras is an $F$-isomorphism and $\g H$ takes values in 
finite groups.  Then the induced morphisms $
  H_n \rightarrow H_n'
 $ of the primitive filtration 
 are $F$-isomorphisms.
\end{prop}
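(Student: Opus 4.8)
The plan is to argue by induction on $n$, the case $n=0$ being exactly the hypothesis that $H \to H'$ is an $F$-isomorphism. For the inductive step I assume $H_n \to H_n'$ is an $F$-isomorphism and aim to deduce the same for $H_{n+1} \to H_{n+1}'$, recalling that $H_{n+1}$ (resp. $H_{n+1}'$) is by construction the cokernel in $\hopfkc$ of $U PH_n \hookrightarrow H_n$ (resp. $U PH_n' \hookrightarrow H_n'$), the inclusions being those of Lemma \ref{lem:UPH_H}.

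The first task is to arrange the finiteness needed to apply Lemma \ref{lem:ses_Hopf_group}. Since each stage of the primitive filtration is a quotient Hopf algebra, the surjection $H \twoheadrightarrow H_n$ induces a monomorphism $\g H_n \hookrightarrow \g H$; as $\g H$ takes values in finite groups by hypothesis, so does $\g H_n$. On the primed side I use that the given $F$-isomorphism induces an isomorphism $\g H' \stackrel{\cong}{\to} \g H$ by Proposition \ref{prop:g_factors_nil}, so that $\g H_n' \hookrightarrow \g H' \cong \g H$ takes finite values as well.

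Applying $\g$ to the short exact sequence of Hopf algebras $U PH_n \hookrightarrow H_n \twoheadrightarrow H_{n+1}$ (and to its primed analogue) then yields, by Lemma \ref{lem:ses_Hopf_group}, short exact sequences of finite groups
\[
 \g H_{n+1} \hookrightarrow \g H_n \twoheadrightarrow \g U PH_n
\]
together with a morphism from the primed version to this one, induced contravariantly by $H \to H'$. The middle map $\g H_n' \to \g H_n$ is an isomorphism, by the inductive hypothesis and Proposition \ref{prop:g_factors_nil}. For the right-hand map $\g U PH_n' \to \g U PH_n$, I invoke Lemma \ref{lem:prim_mod_nil}: it shows that $PH_n \to PH_n'$ is an isomorphism in $\unst/\nil$, and since $\g U M (V) = \hom_\unst (M, H^*(BV))$ with $H^*(BV)$ reduced, this set depends on $M$ only through $\unst/\nil$; hence $\g U PH_n' \to \g U PH_n$ is an isomorphism. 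The short five lemma for groups then forces $\g H_{n+1}' \to \g H_{n+1}$ to be an isomorphism.

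The step I expect to be the main obstacle is the final passage from ``$\g$ is an isomorphism'' back to ``$H_{n+1}\to H_{n+1}'$ is an $F$-isomorphism'', since $\g$ does not detect $F$-isomorphisms on the nose. Here I appeal to the naturality of the $F$-isomorphism $K \to \kappa \g K$ of Proposition \ref{prop:kappa_g}: for $f : H_{n+1} \to H_{n+1}'$ it gives a commutative square whose horizontal edges $H_{n+1} \to \kappa \g H_{n+1}$ and $H_{n+1}' \to \kappa \g H_{n+1}'$ are $F$-isomorphisms, and whose right-hand edge $\kappa \g H_{n+1} \to \kappa \g H_{n+1}'$ is $\kappa$ applied to the isomorphism just obtained, hence an isomorphism. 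Passing to $\unstalg/\nil$, where $F$-isomorphisms become invertible, three of the four edges are isomorphisms, so the fourth, namely $f$, is an isomorphism in $\unstalg/\nil$; that is, $H_{n+1} \to H_{n+1}'$ is an $F$-isomorphism, completing the induction.
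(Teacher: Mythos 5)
Your proof is correct and follows essentially the same route as the paper: induction reducing to one step of the primitive filtration, Lemma \ref{lem:prim_mod_nil} to handle the primitives, Lemma \ref{lem:ses_Hopf_group} to get a morphism of short exact sequences of finite group-valued functors, and the five lemma to conclude that $\g H_{n+1}' \rightarrow \g H_{n+1}$ is an isomorphism. You supply a few details the paper leaves implicit (the finiteness of $\g H_n$, and the passage back from a $\g$-isomorphism to an $F$-isomorphism via Proposition \ref{prop:kappa_g}); the only minor divergence is that the paper deduces $UPH_n \rightarrow UPH_n'$ is an $F$-isomorphism by citing Kuhn, whereas you argue directly with $\hom_\unst(-,H^*(BV))$, which works because $H^*(BV)$ is nil-closed.
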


\begin{proof}
 By induction, it suffices to prove the case $n=1$. The morphism $H \rightarrow 
H'$ induces a commutative diagram of sequences  in $\hopfkc$:
 \[
  \xymatrix{
  UP H 
  \ar@{^(->}[r]
  \ar[d]
&
H 
\ar[d]
\ar@{->>}[r]
&
H_1 
\ar[d]
\\
  UP H' 
  \ar@{^(->}[r]
&
H' 
\ar@{->>}[r]
&
H'_1. 
  }
 \]
By hypothesis, $H \rightarrow H'$ is an $F$-isomorphism; Lemma 
\ref{lem:prim_mod_nil} implies that 
$PH \rightarrow PH'$ is an isomorphism in $\unst/ \nil$ and this implies that 
$U PH \rightarrow UPH'$ is an $F$-isomorphism (by \cite{K_comp}). 

Applying Lemma \ref{lem:ses_Hopf_group} gives a morphism between short exact 
sequences of functors to finite 
groups. It follows that $\g H'_1 \rightarrow \g H_1$ is an isomorphism, hence 
that  $H_1 
\rightarrow H'_1$ 
 is an $F$-isomorphism.
\end{proof}

\begin{prop}
\label{prop:gH_values_finite_p-groups}
 Let $H \in \ob \hopfkc$ have underlying unstable algebra that is 
$\cala$-finitely generated up to  nilpotents. 
 Then $\g H $ takes values naturally in finite $p$-groups. 
\end{prop}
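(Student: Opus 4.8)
The plan is to combine the finiteness of $\g H$ with the primitive filtration of $H$, reducing the $p$-group assertion at each stage to the essentially elementary case of a primitively-generated Hopf algebra. First I would record that $\g H$ takes values in finite groups: since the underlying unstable algebra of $H$ is $\cala$-finitely generated up to nilpotents, Corollary \ref{cor:equivalent_finite_fgmodnil} shows that $\g H$ is a finite presheaf, hence takes values in finite sets by Proposition \ref{prop:finite_growth_poly}, and Lemma \ref{lem:ses_Hopf_group} then equips $\g H$ with a natural structure of presheaf of finite groups.

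Next I would exploit the primitive filtration $H = H_0 \twoheadrightarrow H_1 \twoheadrightarrow \cdots$, in which $H_{n+1}$ is the cokernel in $\hopfkc$ of $U PH_n \hookrightarrow H_n$ (Lemma \ref{lem:UPH_H}). Proceeding by induction on $n$, starting from the finiteness of $\g H_0 = \g H$, I would apply Lemma \ref{lem:ses_Hopf_group} to the inclusion $U PH_n \hookrightarrow H_n$ to obtain natural short exact sequences of finite groups
\[
 \g H_{n+1} \stackrel{\lhd}{\hookrightarrow} \g H_n \twoheadrightarrow \g (U PH_n),
\]
the injection showing that $\g H_{n+1}$ again takes finite values, so that the Lemma applies at the following stage. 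The decisive feature of the quotient term is that $U PH_n$ is primitively generated: via the enveloping-algebra adjunction one has $\g (U PH_n)(V) \cong \hom_\unst (PH_n, H^* (BV))$, and the group law induced by the primitively-generated Hopf structure is simply the addition of module maps. Hence $\g (U PH_n)$ takes values in $\fieldp$-vector spaces, which, being finite, are finite $p$-groups.

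It remains to propagate the $p$-group property down the filtration, and this is the main obstacle: because $H$ is only $\cala$-finitely generated \emph{up to nilpotents}, Lemma \ref{lem:prim_filt}(2) does not apply and the filtration need not terminate after finitely many steps. Instead I would argue pointwise. Lemma \ref{lem:prim_filt}(1) gives $\lim_\rightarrow H_n = \fieldp$, so applying the contravariant functor $\g$, which converts this colimit into a limit, yields $\lim_\leftarrow \g H_n = \g \fieldp = *$. Fixing $V \in \ob \fdvs$, the short exact sequences above present a descending chain of finite subgroups $\cdots \hookrightarrow \g H_1 (V) \hookrightarrow \g H_0 (V)$ with trivial inverse limit; by finiteness it stabilizes, so $\g H_N (V) = *$ for some $N$. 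A downward induction from $N$ then applies, in which $\g H_n (V)$ is an extension of the finite $p$-group $\g (U PH_n)(V)$ by $\g H_{n+1}(V)$ and an extension of finite $p$-groups is a finite $p$-group, showing that $\g H (V) = \g H_0 (V)$ is a finite $p$-group. Since $V$ is arbitrary and the group structure is natural, this gives the result.
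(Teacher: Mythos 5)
Your proof is correct, and it shares the skeleton of the paper's argument --- the primitive filtration, dévissage via Lemma \ref{lem:ses_Hopf_group}, the identification of the primitively-generated case with $V \mapsto \hom_{\unst}(PH_n, H^*(BV))$ carrying its elementary abelian group law, and closure of finite $p$-groups under extensions --- but it resolves the one genuine difficulty by a different device. The paper first reduces to the case where $H$ itself is $\cala$-finitely generated: Proposition \ref{prop:hopf_A_fg} produces an $\cala$-finitely generated sub Hopf algebra $H_K \subset H$ whose inclusion is an $F$-epimorphism, hence (being injective) an $F$-isomorphism, so that $\g H \cong \g H_K$ by Proposition \ref{prop:g_factors_nil}; after this replacement, Lemma \ref{lem:prim_filt}(2) makes the primitive filtration terminate globally and the induction is finite and uniform in $V$. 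You instead leave $H$ untouched, accept that the filtration may be infinite, and terminate pointwise: since $\hom_{\unstalg}(-, H^*(BV))$ carries the colimit $\lim_\rightarrow H_n = \fieldp$ of Lemma \ref{lem:prim_filt}(1) to an inverse limit, the descending chain of finite groups $\g H_n(V)$ has trivial limit, hence stabilizes at the trivial group, and your downward induction through extensions of finite $p$-groups is then valid (note that $\g(UPH_n)(V)$ is finite because it is a quotient of the finite set $\g H_n(V)$, so the elementary abelian values are indeed finite $p$-groups). Your route avoids Proposition \ref{prop:hopf_A_fg} --- and the sub-coalgebra construction behind it --- as well as the invariance of $\g$ under $F$-isomorphism, at the cost of a slightly more delicate limit argument in which the truncation stage $N$ depends on $V$; the paper's reduction buys a cleaner uniform induction and is the same device it reuses later (e.g.\ in the proof of Theorem \ref{thm:pfin_finite}). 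Both arguments are complete; the paper also hints at a third variant via Proposition \ref{prop:hopf_F_iso_primitive_filt}.
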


\begin{proof}
 By hypothesis, there exists a sub unstable algebra $K \subset H$ such that $K$ 
is $\cala$-finitely generated and the inclusion is an $F$-epimorphism. 
 Proposition \ref{prop:hopf_A_fg} provides $K \subset H_K \subset H$ with $H_K 
\in \ob \hopfkc$ that is also $\cala$-finitely generated. Clearly $H_K \subset 
H$ is also an $F$-epimorphism, hence we may suppose without loss of generality 
that $H$ is $\cala$-finitely generated as an unstable algebra, so that the 
primitive filtration is finite. (Alternatively, Proposition 
\ref{prop:hopf_F_iso_primitive_filt} can be used.)
 
 Dévissage using the primitive filtration together with Lemma 
\ref{lem:ses_Hopf_group} allows reduction to the case where $H$ is primitively 
generated. If $H \cong U PH$, then 
 $\g H$ is given by the underlying set-valued functor of $V \mapsto \hom _\unst 
(PH, H^* (BV))$. It is straightforward to check that  the group structure of $\g 
H$ corresponds to the elementary 
abelian $p$-group structure of $\hom _\unst (PH, H^* (BV))$; 
 in particular, $\g H$ takes values in finite $p$-groups. 
\end{proof}


\section{Functors to finite $p$-groups}
\label{sect:fun_fpg}

Motivated by Proposition \ref{prop:gH_values_finite_p-groups}, this section 
studies 
presheaves of finite $p$-groups.

\subsection{$p$-finiteness}

\begin{defn}
 Let $\fpg$ denote the category of functors from $\fdvs \op$ to the category of 
finite $p$-groups (i.e. presheaves of finite $p$-groups).
\end{defn}

\begin{rem}
 The category $\f$ is a full subcategory of $\fpg$. Forgetting the group 
structure gives a faithful functor 
 $
  \fpg \rightarrow \prshv.
 $
\end{rem}

Recall that the Frattini subgroup $\Phi (G) \subset G$ of a finite group $G$ is 
the intersection of all maximal proper subgroups of $G$. 
If $G$ is a finite $p$-group then
\[
 \Phi G = [G, G ]G^p
\]
and $\Phi G$ is the minimal normal subgroup of $G$ such that the quotient $G/ 
\Phi G$ is $p$-elementary abelian.

The above description makes it clear that a morphism of finite $p$-groups, $G_1 
\rightarrow G_2$ restricts to 
$\Phi G_1 \rightarrow \Phi G_2$ and thus induces a morphism of 
$\fieldp$-vector 
spaces 
\[
 G/ \Phi G_1 
 \rightarrow 
 G/ \Phi G_2.
\]

\begin{defn}
 For $G$ a finite $p$-group, let $\Phi_n G$ denote the $p$-derived series (or 
Frattini series) of $G$, defined 
 recursively by $\Phi_0 G = G$ and $\Phi_{n+1} G = \Phi (\Phi_n G)$. 
 
 For a finite $p$-group, this series is finite (i.e. $\Phi_N G=\{e \}$ for 
$N\gg 
0$). 
\end{defn}

\begin{lem}
\label{lem:naturality_Frattini_series}
 Let $f: G_1 \rightarrow G_2$ be a morphism of finite $p$-groups, then for 
$n \in \nat$, $f$ 
restricts to a morphism 
 \[
  \Phi_n f  : \Phi_n G_1 \rightarrow \Phi_n G_2, 
 \]
thus induces a natural morphism of graded $\fieldp$-vector spaces:  
\[
\bigoplus _{n\geq 0} \Phi_n f  : 
 \bigoplus _{n\geq 0} \Phi_n  G_1/\Phi_{n+1} G_1 
 \rightarrow 
 \bigoplus _{n\geq 0} \Phi_n G_2/ \Phi_{n+1} G_2. 
\]
\end{lem}

\begin{proof}
 A straightforward induction upon $n$. 
\end{proof}

\begin{prop}
 Let $\mathfrak{G} \in \ob \fpg$ be a presheaf  of finite 
$p$-groups. The  $p$-derived series induces a natural series 
 \[
 \ldots \subset \Phi_{n+1} \mathfrak{G} \subset \Phi_{n} \mathfrak{G} \subset 
\ldots 
\subset \mathfrak{G}
 \]
such that, evaluated on $V \in \ob \fdvs$, $(\Phi_n \mathfrak{G})(V) = \Phi_n 
(\mathfrak{G} (V))$.

The associated graded 
\[
 \bigoplus _{n \geq 0} \Phi_{n} \mathfrak{G}/  \Phi_{n+1} \mathfrak{G}
\]
is an $\nat$-graded functor with values in $\fdvs$. For any $V \in \ob \fdvs$, 
$\Phi_{i} 
\mathfrak{G}/  \Phi_{i+1} \mathfrak{G} (V) = 0 $ for $i \gg 0$. 
\end{prop}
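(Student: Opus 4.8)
The plan is to build the natural filtration of $\mathfrak{G}$ by applying the $p$-derived series pointwise, and then to verify that this pointwise construction assembles into a subfunctor structure compatible with the restriction maps of the presheaf. First I would set $(\Phi_n \mathfrak{G})(V) := \Phi_n(\mathfrak{G}(V))$ for each $V \in \ob \fdvs$. To see that this defines a subpresheaf, I would invoke Lemma \ref{lem:naturality_Frattini_series}: for a morphism $\alpha : V \rightarrow W$ in $\fdvs$, the presheaf gives $\mathfrak{G}(\alpha) : \mathfrak{G}(W) \rightarrow \mathfrak{G}(V)$, and since this is a morphism of finite $p$-groups, it restricts to $\Phi_n(\mathfrak{G}(W)) \rightarrow \Phi_n(\mathfrak{G}(V))$. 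Thus the restriction maps preserve the subgroups $\Phi_n$, so $\Phi_n \mathfrak{G}$ is a subfunctor and the inclusions $\Phi_{n+1}\mathfrak{G} \subset \Phi_n \mathfrak{G}$ are natural.

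Next I would form the quotient functor $\Phi_n \mathfrak{G} / \Phi_{n+1}\mathfrak{G}$. By the standard description of the Frattini quotient recalled in the excerpt, each quotient $\Phi_n(\mathfrak{G}(V)) / \Phi_{n+1}(\mathfrak{G}(V))$ is a $p$-elementary abelian group, hence naturally an $\fieldp$-vector space, and it is finite-dimensional since $\mathfrak{G}(V)$ is a finite $p$-group. The induced restriction maps are $\fieldp$-linear (again by Lemma \ref{lem:naturality_Frattini_series}), so $\Phi_n \mathfrak{G}/\Phi_{n+1}\mathfrak{G}$ is a functor from $\fdvs\op$ to $\fdvs$, and the direct sum over $n$ is an $\nat$-graded such functor. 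This is largely a formal assembling step once the subfunctor property is in hand.

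The one genuinely substantive point is the final vanishing claim: for fixed $V$, $\Phi_i \mathfrak{G}(V) / \Phi_{i+1}\mathfrak{G}(V) = 0$ for $i \gg 0$. This is immediate from the already-stated fact that the $p$-derived series of any single finite $p$-group terminates: since $\mathfrak{G}(V)$ is a finite $p$-group, there exists $N$ (depending on $V$) with $\Phi_N(\mathfrak{G}(V)) = \{e\}$, whence $\Phi_i(\mathfrak{G}(V)) = \{e\}$ and the associated graded piece is trivial for all $i \geq N$. I expect the main (modest) obstacle to be purely notational bookkeeping — making explicit that the pointwise-defined subgroups and quotients really do commute with the presheaf's restriction maps and thus live in the functor category — rather than any real mathematical difficulty, since all the group-theoretic content is packaged in Lemma \ref{lem:naturality_Frattini_series} and the finiteness of the Frattini series.
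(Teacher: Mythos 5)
Your proposal is correct and follows exactly the paper's route: the paper's proof is a one-line appeal to the naturality of the $p$-derived series established in Lemma \ref{lem:naturality_Frattini_series}, and your argument is simply a fuller write-up of that same reduction, with the pointwise termination of the Frattini series of each finite $p$-group $\mathfrak{G}(V)$ giving the vanishing for $i \gg 0$.
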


\begin{proof}
 An immediate consequence of the naturality of the $p$-derived series, 
established in Lemma \ref{lem:naturality_Frattini_series}. 
\end{proof}

\begin{defn}
 A functor $\mathfrak{G} \in \ob \fpg$  to the category of finite $p$-groups is 
$p$-finite if $ \bigoplus _{n \geq 0} \Phi_{n} \mathfrak{G}/  \Phi_{n+1} 
\mathfrak{G}$ is 
a finite functor, considered as an object of the category $\f$ (forgetting the 
grading). 
\end{defn}

The notion of a polynomial functor\footnote{The author is grateful to Christine 
Vespa for pointing out that this notion of polynomial provides an 
equivalent condition.}  to the category of groups ({\em à la} Baues-Pirashvili 
\cite{BP})  is recalled in Section \ref{sect:poly}.

\begin{thm}
\label{thm:p-finite_equiv}
 For $\mathfrak{G} \in \ob \fpg$, the following conditions are equivalent: 
 \begin{enumerate}
  \item 
  $\mathfrak{G}$ is $p$-finite;
  \item 
  $\mathfrak{G}$ is polynomial;
  \item 
  Both the following conditions are satisfied:
   \begin{enumerate}
  \item 
  $\Phi_N \mathfrak{G} =0$ for $N \gg 0$ (uniformly); 
  \item 
  each $\Phi_{i} \mathfrak{G}/  \Phi_{i+1} \mathfrak{G}$ is a finite functor of 
$\f$;
 \end{enumerate}
 \item 
 $\mathfrak{G}$ has a composition series;
  \item 
  the growth function $\gamma_\mathfrak{G}$ satisfies $\gamma_\mathfrak{G}(t) = 
O (t^d)$ for some $d \in \nat$. 
 \end{enumerate}
If $\mathfrak{G}$ takes values in finite abelian $p$-groups, this is equivalent 
to 
\begin{itemize}
 \item 
 $\mathfrak{G}$ is finite as a functor to the abelian category of finite 
abelian 
$p$-groups.
\end{itemize} 
\end{thm}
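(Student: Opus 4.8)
The plan is to route every equivalence through the associated graded of the $p$-derived (Frattini) filtration, $\mathrm{gr}_\bullet\mathfrak{G} := \bigoplus_{n\geq 0}\Phi_n\mathfrak{G}/\Phi_{n+1}\mathfrak{G}$, regarded as an $\nat$-graded object of $\f$. The cornerstone is the order formula $|G| = \prod_{n\geq 0}|\Phi_n G/\Phi_{n+1}G|$ for a finite $p$-group, which, evaluated on $V\in\ob\fdvs$, yields for every $t$
\[
 \gamma_\mathfrak{G}(t) \;=\; \sum_{n\geq 0}\gamma_{\mathrm{gr}_n\mathfrak{G}}(t),
 \qquad
 \mathrm{gr}_n\mathfrak{G} := \Phi_n\mathfrak{G}/\Phi_{n+1}\mathfrak{G} \in \ob\f,
\]
a sum of non-negative functions in which, at each fixed $t$, only finitely many terms are nonzero. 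With this in hand, (1)$\Leftrightarrow$(3) is immediate: by definition $\mathfrak{G}$ is $p$-finite iff $\mathrm{gr}_\bullet\mathfrak{G}$ is finite in $\f$, and an $\nat$-indexed direct sum admits a finite composition series exactly when only finitely many summands are nonzero (this is (3a), the uniform vanishing $\Phi_N\mathfrak{G}=0$) and each summand is finite (this is (3b)).

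For (3)$\Leftrightarrow$(4) I would observe that the Frattini filtration is a natural, hence functorial, normal series of $\mathfrak{G}$ in $\fpg$ with subquotients $\mathrm{gr}_n\mathfrak{G}$ lying in $\f$; under (3) it is finite with finite-length layers, so refining each layer by a composition series in $\f$ and noting that simple objects of $\f$ remain simple in $\fpg$ (subgroups of a vector-space-valued functor are subfunctors and normality is automatic) produces a composition series of $\mathfrak{G}$, while conversely a finite composition series forces (3a) and (3b). For the growth equivalence the easy direction (3)$\Rightarrow$(5) follows from the display: finitely many finite layers give, by Proposition \ref{prop:poly_degree_growth}, finitely many polynomial functions $\gamma_{\mathrm{gr}_n\mathfrak{G}}$ whose sum is $O(t^d)$. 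The converse (5)$\Rightarrow$(3) is delicate: non-negativity of the display forces each $\gamma_{\mathrm{gr}_n\mathfrak{G}}(t)=O(t^d)$, whereupon I would invoke Kuhn's growth results \cite{KI}, that polynomially bounded growth of a functor with finite-dimensional values already implies finiteness, to obtain (3b); the uniform Frattini bound (3a) would follow by remarking that a nonzero finite functor has growth bounded below, so infinitely many nonzero layers, or unbounded degrees among them, would drive $\gamma_\mathfrak{G}$ beyond any fixed polynomial.

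To incorporate (2) I would use the Baues--Pirashvili cross-effect calculus of Appendix \ref{sect:poly}: polynomial degree $\leq d$ is detected by vanishing of the $(d+1)$-st cross effect and is inherited by subfunctors and quotients, so each linear layer $\mathrm{gr}_n\mathfrak{G}\in\f$ acquires polynomial degree $\leq d$ and is finite by Proposition \ref{prop:finite_polynomial}; dually, the cross-effect bound constrains commutators and $p$-th powers and hence caps the Frattini length, giving (2)$\Rightarrow$(3), while (3)$\Rightarrow$(2) reassembles polynomiality of $\mathfrak{G}$ from its finitely many finite layers using stability of bounded-degree polynomiality under extension. For the abelian addendum, $[G,G]=0$ makes $\Phi G = pG$, so the $p$-derived series is the $p$-multiplication filtration $\Phi_n\mathfrak{G}=p^n\mathfrak{G}$ and $\mathrm{gr}_\bullet\mathfrak{G}$ is the corresponding associated graded; ordinary d\'evissage along $p^n\mathfrak{G}$ then shows that finiteness of $\mathfrak{G}$ in the abelian category of finite-abelian-$p$-group-valued functors is equivalent to $\mathrm{gr}_\bullet\mathfrak{G}$ being finite in $\f$, i.e. to $p$-finiteness.

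The hardest part will be the interface with condition (2): passing the Baues--Pirashvili polynomial degree through the genuinely nonlinear Frattini filtration---most acutely, bounding the length of the $p$-derived series in terms of the polynomial degree---is where the nonabelian group structure really intervenes and where I expect the substantive work to lie. The converse growth implication (5)$\Rightarrow$(3) is a close second, since it rests on the nontrivial input from \cite{KI} that polynomially bounded growth forces polynomiality rather than merely constraining it.
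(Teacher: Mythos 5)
Your overall strategy coincides with the paper's, which gives only a two\-/sentence sketch of exactly this plan: reduce to $\fieldp$-vector-space-valued functors via the Frattini filtration, where Propositions \ref{prop:finite_polynomial} and \ref{prop:poly_degree_growth} apply, and use the thickness of polynomiality (Corollary \ref{cor:thick_poly}) to pass between $\mathfrak{G}$ and its layers. Your order formula $\gamma_\mathfrak{G}=\sum_n \gamma_{\Phi_n\mathfrak{G}/\Phi_{n+1}\mathfrak{G}}$, the d\'evissage giving (1)$\Leftrightarrow$(3)$\Leftrightarrow$(4), the implication (3)$\Rightarrow$(5), and the abelian addendum are all sound.

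There is, however, a genuine gap at the uniform termination condition (3a), which you need in both (5)$\Rightarrow$(3) and (2)$\Rightarrow$(3), and which your stated reasons do not deliver. Knowing that infinitely many layers $\mathrm{gr}_n\mathfrak{G}:=\Phi_n\mathfrak{G}/\Phi_{n+1}\mathfrak{G}$ are nonzero and finite does not by itself contradict $\gamma_\mathfrak{G}(t)=O(t^d)$: a priori the $n$-th nonzero layer could first become nonzero only on spaces of dimension $t_n$ with $t_n\to\infty$, so that at each fixed $t$ only boundedly many layers contribute and the sum stays polynomial. The missing lemma is that a nonzero polynomial functor of $\f$ of degree $\leq d$ is nonzero on some $\fieldp^k$ with $k\leq d$ (hence, by functoriality and the splitting of surjections, on $\fieldp^d$ itself): this follows by downward induction from the injectivity criterion of Lemma \ref{lem:equivalent_reduced_poly}. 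Under (5) each layer satisfies $\gamma_{\mathrm{gr}_n\mathfrak{G}}\leq\gamma_\mathfrak{G}=O(t^d)$ and is therefore polynomial of degree $\leq d$ by Proposition \ref{prop:poly_degree_growth}; the lemma then forces every nonzero layer to be nonzero at $\fieldp^d$, and since the Frattini series of the finite $p$-group $\mathfrak{G}(\fieldp^d)$ has finite length, only finitely many layers are nonzero. The same observation disposes of the step you flag as hardest: for (2)$\Rightarrow$(3) there is no need to analyse commutators and $p$-th powers. Corollary \ref{cor:thick_poly}, applied to the short exact sequences $\Phi_{k+1}\mathfrak{G}\hookrightarrow\Phi_k\mathfrak{G}\twoheadrightarrow\mathrm{gr}_k\mathfrak{G}$, propagates the degree bound of $\mathfrak{G}$ down the whole Frattini series, giving (3b) via Proposition \ref{prop:finite_polynomial} and then (3a) by the connectivity argument just given; alternatively, Lemma \ref{lem:equivalent_reduced_poly} bounds $\gamma_\mathfrak{G}(t)$ directly by $O(t^n)$ for $\mathfrak{G}$ of degree $\leq n$, reducing (2) to (5). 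With these repairs your argument closes and agrees with the paper's intended proof.
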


\begin{proof}
 The result is proved by reducing to the case where $\mathscr{G}$ takes values 
in $\fieldp$-vector spaces, where the result is standard, 
 for instance by applying Proposition \ref{prop:finite_polynomial} and 
Proposition \ref{prop:poly_degree_growth}.
 
The equivalence of $p$-finiteness and polynomiality is proved by using the 
thickness of the polynomial property established in Corollary 
\ref{cor:thick_poly}.
\end{proof}

The relevance of $p$-finiteness is shown by the following result:

\begin{cor}
\label{cor:finite_generation_implies_finite} 
 Let $\mathfrak{G}\in \ob \fpg$ be a functor to finite $p$-groups such that  
the underlying presheaf $\mathfrak{G}\in \ob \prshv$ is finite.  Then 
$\mathfrak{G}$ is $p$-finite. 
\end{cor}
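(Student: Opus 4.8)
The plan is to reduce immediately to the growth-function criterion furnished by Theorem \ref{thm:p-finite_equiv}. That theorem lists, among the conditions equivalent to $p$-finiteness of an object of $\fpg$, the purely quantitative condition that $\gamma_{\mathfrak{G}}(t) = O(t^d)$ for some $d \in \nat$. The key observation making this applicable is that the growth function $\gamma_{\mathfrak{G}}$ depends only on the cardinalities $|\mathfrak{G}(\fieldp^t)|$, hence only on the \emph{underlying} set-valued presheaf of $\mathfrak{G}$ and not on its group structure. Thus it suffices to bound the growth of the underlying presheaf.

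First I would invoke the hypothesis: the underlying presheaf $\mathfrak{G} \in \ob \prshv$ is finite, so by Definition \ref{def:finite} there is a finite functor $F_X \in \ob \f$, say of polynomial degree $d$, together with a monomorphism $\mathfrak{G} \hookrightarrow F_X$ in $\prshv$. Applying Proposition \ref{prop:finite_growth_poly} to this finite presheaf gives both that $\mathfrak{G}$ takes values in finite sets (as is anyway clear, since it takes values in finite $p$-groups) and that $\gamma_{\mathfrak{G}}(t) = O(t^d)$.

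Having established polynomial growth, I would then apply the equivalence of conditions (1) and (5) in Theorem \ref{thm:p-finite_equiv} to conclude that $\mathfrak{G}$ is $p$-finite, which is exactly the desired statement.

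I do not expect a genuine obstacle here: all the substantive work has already been carried out in Theorem \ref{thm:p-finite_equiv}, whose proof handles the passage from the growth bound on $\mathfrak{G}$ to control of the associated graded $\bigoplus_{n\geq 0}\Phi_n\mathfrak{G}/\Phi_{n+1}\mathfrak{G}$ in $\f$. The only point meriting an explicit remark is the identification of $\gamma_{\mathfrak{G}}$ with the growth function of the underlying set-valued presheaf, so that the finiteness hypothesis (phrased in $\prshv$) feeds directly into a condition phrased for objects of $\fpg$.
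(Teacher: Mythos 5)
Your proposal is correct and follows exactly the paper's own argument: invoke Proposition \ref{prop:finite_growth_poly} to get the polynomial growth bound $\gamma_{\mathfrak{G}}(t)=O(t^d)$ from finiteness of the underlying presheaf, then apply the growth-function criterion of Theorem \ref{thm:p-finite_equiv}. Your explicit remark that $\gamma_{\mathfrak{G}}$ depends only on the underlying set-valued presheaf is a worthwhile clarification the paper leaves implicit, but the route is the same.
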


\begin{proof}
The hypothesis that the underlying presheaf is finite implies that 
$\gamma_\mathfrak{G}= O (t^d)$ for some $d \in \nat$, by Proposition 
\ref{prop:finite_growth_poly}, hence the result follows from Theorem 
\ref{thm:p-finite_equiv}.
\end{proof}

\begin{exam}
Corollary \ref{cor:finite_generation_implies_finite} applies to the case 
$\mathfrak{G} = \g H$, where $H \in \ob \hopfkc$ is $\cala$-finitely generated 
up to  nilpotents. 
\end{exam}

\subsection{Coanalyticity of $p$-finite functors to $p$-groups}

For any $\mathfrak{G}\in \ob \fpg$, composition with the group  ring functor 
$\fieldp[- ]$ (and forgetting the ring structure) gives a functor 
$\fieldp[\mathfrak{G}] \in \f$ that takes finite-dimensional values. Example 
\ref{exam:Ibar} shows that such a functor need not be {\em coanalytic} (the 
inverse limit of its finite quotients). 

This issue is resolved when one imposes  $p$-finiteness: 

\begin{prop}
\label{prop:gp_ring_coanalytic}
 Let $\mathfrak{G}\in \ob \fpg$ be $p$-finite. Then $\fieldp [\mathfrak{G}] \in 
\ob \f$ is coanalytic.
\end{prop}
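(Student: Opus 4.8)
The plan is to equip the functor $\fieldp[\mathfrak{G}]$ with the filtration by powers of its augmentation ideal and to exhibit it as a cofiltered limit of finite functors, which is coanalyticity straight from the definition. Let $\epsilon\colon \fieldp[\mathfrak{G}] \to \underline{\fieldp}$ denote the natural augmentation $[g]\mapsto 1$ to the constant functor, and set $J := \ker\epsilon \in \ob\f$, the augmentation-ideal functor, so that $\fieldp[\mathfrak{G}] \cong \underline{\fieldp}\oplus J$. Two structural inputs underpin the argument and distinguish it from Example~\ref{exam:Ibar}. First, for every $V$ the algebra $\fieldp[\mathfrak{G}(V)]$ is the group algebra of a finite $p$-group, so the ideal $J(V)$ is nilpotent. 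Second, naturally in the group one has $I_G/I_G^2 \cong G/\Phi G$; evaluated sectionwise this gives a natural isomorphism $J/J^2 \cong \mathfrak{G}/\Phi_1\mathfrak{G}$. Since $\mathfrak{G}$ is $p$-finite, $\Phi_0\mathfrak{G}/\Phi_1\mathfrak{G}$ is a direct summand of the finite functor $\bigoplus_n \Phi_n\mathfrak{G}/\Phi_{n+1}\mathfrak{G}$, hence is itself finite in $\f$; thus $J/J^2$ is finite.

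Next I would propagate finiteness up the filtration. Multiplication in $\fieldp[\mathfrak{G}]$ provides, for each $k$, a natural surjection $(J/J^2)^{\otimes k} \twoheadrightarrow J^k/J^{k+1}$, since the adic associated graded is a quotient of the tensor algebra on $J/J^2$ (a product with one factor in $J^2$ lands in $J^{k+1}$). A tensor product of finite functors is finite (Proposition~\ref{prop:finite_polynomial}) and finiteness passes to quotients, so each $J^k/J^{k+1}$ is finite. Consequently $\fieldp[\mathfrak{G}]/J^K$ carries the finite filtration with subquotients $J^j/J^{j+1}$ for $0\le j<K$, and, finiteness being closed under extensions in the abelian category $\f$, each $\fieldp[\mathfrak{G}]/J^K$ is a finite functor.

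Finally I would pass to the limit over the cofiltered poset $\nat\op$. The canonical map $\fieldp[\mathfrak{G}] \to \lim_K \fieldp[\mathfrak{G}]/J^K$ is an isomorphism by a sectionwise check: for fixed $V$, nilpotence of $J(V)$ gives $J^K(V)=0$ for $K\gg 0$, so the tower is eventually constant at $\fieldp[\mathfrak{G}](V)$. As limits are computed sectionwise and preserved by the forgetful functor to $\gps$, this realises $\fieldp[\mathfrak{G}]$ as a cofiltered limit of finite presheaves, i.e.\ as a coanalytic functor (equivalently one may quote Proposition~\ref{prop:coanalytic_vs_coanalytic_completion}). I expect the only genuinely non-formal point to be securing the naturality in $\mathfrak{G}$ of both $J/J^2 \cong \mathfrak{G}/\Phi_1\mathfrak{G}$ and the tensor-power surjection onto the adic graded, since it is exactly this naturality that keeps the argument inside $\f$ rather than on individual sections; and it is the finite-$p$-group hypothesis, through the nilpotence of $J(V)$, that rescues coanalyticity here, in contrast with the non-nilpotent augmentation ideal of the free linearisation in Example~\ref{exam:Ibar}.
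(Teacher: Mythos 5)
Your argument is correct and is essentially the paper's own proof: the same filtration by powers of the augmentation ideal, the identification $J/J^2\cong\mathfrak{G}/\Phi\mathfrak{G}$, the surjections $(J/J^2)^{\otimes k}\twoheadrightarrow J^k/J^{k+1}$ to get finiteness of the quotients, and sectionwise nilpotence of $J(V)$ (via the finite $p$-group hypothesis) to identify $\fieldp[\mathfrak{G}]$ with the inverse limit. The only cosmetic difference is that you deduce finiteness of $\mathfrak{G}/\Phi\mathfrak{G}$ directly from the definition of $p$-finiteness (as a summand of the associated graded), whereas the paper routes this through the polynomial-growth criterion of Theorem \ref{thm:p-finite_equiv}; both are valid.
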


\begin{proof}
 Let $I \mathfrak{G} \subset \fieldp [\mathfrak{G}]$ denote the augmentation 
ideal (kernel in $\f$ of the augmentation $\fieldp [\mathfrak{G}] \rightarrow 
\fieldp$). The powers of the augmentation ideal 
 induces a decreasing filtration 
 \[
  \ldots \subset I^{k+1} \mathfrak{G} \subset  I^{k+1} \mathfrak{G} \subset 
\ldots \subset I \mathfrak{G} \subset \fieldp [\mathfrak{G}]
 \]
and hence an inverse system of quotients 
$
 \fieldp[\mathfrak{G}] \twoheadrightarrow \fieldp[\mathfrak{G}] / I^k 
\mathfrak{G}.
$

To establish the result, it suffices to show that 
\begin{itemize}
 \item 
 each quotient $\fieldp[\mathfrak{G}] / I^k \mathfrak{G}$ is finite; 
 \item 
 $\fieldp [\mathfrak{G}] \cong \lim_{\leftarrow} \fieldp[\mathfrak{G}] / I^k 
\mathfrak{G}$.
\end{itemize}

This is proved using standard results on this filtration (see \cite{Passi}, for 
example).

For the first statement,  since $I^k \mathfrak{G}/ I^{k+1} \mathfrak{G}$ is a 
quotient of $(I \mathfrak{G} / I^2 \mathfrak{G})^{\otimes k}$,
it suffices to show that $I \mathfrak{G} / I^2 \mathfrak{G}$ is a finite 
functor. But the latter is equivalent to the functor $\mathfrak{G}_{ab} \otimes 
\fieldp \cong \mathfrak{G}/ \Phi \mathfrak{G}$. 
The $p$-finiteness hypothesis implies that this is a finite functor of $\f$: it 
takes 
finite-dimensional values and has polynomial growth (by Theorem 
\ref{thm:p-finite_equiv}), hence is finite. 

For the second statement, it suffices to show that, for any $V\in \ob \fdvs$, 
there exists $k_V \in \nat$ such that $I^{k_V} \mathfrak{G} (V)=0$ (here $k_V$ 
depends upon $\mathfrak{G}$); this follows from Lemma 
\ref{lem:finite_p-gp_nilp} below. 
\end{proof}

\begin{lem}
 \label{lem:finite_p-gp_nilp}
 Let $G$ be a finite $p$-group, and $I G $ be the augmentation ideal of the mod 
$p$ group ring $\fieldp [G]$. Then $I G$ is nilpotent, i.e. there exists $N \in 
\nat$ such that $I^N G =0$.
\end{lem}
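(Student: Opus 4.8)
The plan is to prove nilpotency by induction on the order $|G|$, using the fact that every non-trivial finite $p$-group has non-trivial centre (by the class equation) and hence contains a central subgroup of order $p$. The base case $G=\{e\}$ is immediate, since then $\fieldp[G]=\fieldp$ and $IG=0$.

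For the inductive step I would fix a central subgroup $Z\subset G$ of order $p$ with generator $z$, and pass to the quotient $\bar G := G/Z$, which is a $p$-group of strictly smaller order. The key observation is that $z-1\in IG$ is central in $\fieldp[G]$ (since $Z$ is central) and satisfies $(z-1)^p = z^p-1 = 0$, using $z^p=1$ and the characteristic-$p$ identity for the commuting pair $z,1$. The kernel of the surjection $\pi:\fieldp[G]\twoheadrightarrow\fieldp[\bar G]$ is the two-sided ideal $J:=(z-1)\fieldp[G]=\fieldp[G](z-1)$ generated by this central element; consequently $J^p=(z-1)^p\fieldp[G]=0$, so $J$ is nilpotent.

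Next I would feed in the inductive hypothesis. Since $\pi$ carries $IG$ onto the augmentation ideal $I\bar G$, and by induction $(I\bar G)^M=0$ for some $M\in\nat$, applying $\pi$ to $(IG)^M$ shows $(IG)^M\subseteq\ker\pi=J$. Raising to the $p$-th power then gives $(IG)^{Mp}=\bigl((IG)^M\bigr)^p\subseteq J^p=0$, which exhibits $IG$ as nilpotent and closes the induction.

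The argument is largely routine, and I expect the only points needing care to be the standard identifications $\ker\pi=(z-1)\fieldp[G]$ and $\pi(IG)=I\bar G$ for the quotient of group rings, together with the bookkeeping that $z-1$ being central makes $J$ a genuine two-sided ideal with $J^p=0$; these are precisely what the characteristic-$p$ hypothesis supplies. Alternatively, one could argue non-inductively: $\fieldp[G]$ is a finite-dimensional algebra whose unique simple module is the trivial module --- every irreducible representation of a $p$-group in characteristic $p$ is trivial --- so $IG$ is its Jacobson radical, which is nilpotent because the ring is Artinian. I would prefer the inductive proof, as it is elementary and self-contained.
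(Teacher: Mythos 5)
Your proof is correct and follows exactly the route the paper indicates: induction on $|G|$, with the inductive step driven by the non-triviality of the centre of a finite $p$-group (the paper only sketches this, and your write-up supplies the standard details via the central element $z-1$ with $(z-1)^p=0$).
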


\begin{proof}
 This is a standard result. It is proved by induction upon the order of the 
finite $p$-group $G$, the inductive step 
 relying on the fact that the centre of a finite $p$-group is non-trivial.
\end{proof}

\subsection{$p$-finiteness versus finiteness}

The following theorem implies that the two notions of finiteness for 
$\mathfrak{G} \in \ob \fpg$ coincide.

\begin{thm}
\label{thm:pfin_finite}
 Let $\mathfrak{G} \in \ob \fpg$ be a functor to finite $p$-groups such that 
$\mathfrak{G}(0)= \{e \}$. Then $\mathfrak{G}$ is $p$-finite 
if and only if 
  the underlying presheaf $\mathfrak{G}$ in $\prshv$ is finite.
\end{thm}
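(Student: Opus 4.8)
The plan is to prove the two implications separately, exploiting the machinery already assembled. One direction is immediate: if $\mathfrak{G}$ is $p$-finite, then Corollary \ref{cor:finite_generation_implies_finite} gives nothing (it goes the wrong way), but Proposition \ref{prop:gp_ring_coanalytic} tells us that $\fieldp[\mathfrak{G}]$ is coanalytic, and more usefully the $p$-finiteness hypothesis directly bounds the growth function via Theorem \ref{thm:p-finite_equiv}. To show the underlying presheaf is finite, I would produce an explicit embedding into a finite functor of $\f$. The natural candidate is $\mathfrak{G} \hookrightarrow \fieldp[\mathfrak{G}] \twoheadrightarrow \fieldp[\mathfrak{G}]/I^n\mathfrak{G}$ for $n \gg 0$: by the proof of Proposition \ref{prop:gp_ring_coanalytic}, each quotient $\fieldp[\mathfrak{G}]/I^n\mathfrak{G}$ is a finite functor, so it suffices to check that the set-theoretic composite is injective for $n$ large. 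Since $\mathfrak{G}$ takes values in finite $p$-groups and the augmentation-ideal filtration is finite on each section (Lemma \ref{lem:finite_p-gp_nilp}), injectivity on each $\fieldp^t$ holds once $n$ exceeds the nilpotence index there; the uniform bound $\Phi_N\mathfrak{G}=0$ from $p$-finiteness, combined with $p$-finiteness of the associated graded, should give a single $n$ working simultaneously. Using Proposition \ref{prop:degree_prshv} (finiteness is detected by a single monomorphism into some $q_n\fieldp[\mathfrak{G}]$, equivalently into a finite functor) closes this direction.

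The converse is the substantive direction. Suppose the underlying presheaf $\mathfrak{G} \in \prshv$ is finite. Then Corollary \ref{cor:finite_generation_implies_finite} applies directly and immediately yields that $\mathfrak{G}$ is $p$-finite. Thus the real content is hidden in that corollary, whose proof in turn runs through the growth-function bound of Proposition \ref{prop:finite_growth_poly} and the omnibus equivalence Theorem \ref{thm:p-finite_equiv}. So the skeleton of the whole argument is: finite presheaf $\Rightarrow$ polynomial growth (Proposition \ref{prop:finite_growth_poly}) $\Rightarrow$ $p$-finite (Theorem \ref{thm:p-finite_equiv}, condition (5) implies (1)); and $p$-finite $\Rightarrow$ polynomial growth $\Rightarrow$ embedding into a finite functor. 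I would state the converse as a one-line appeal to Corollary \ref{cor:finite_generation_implies_finite}, since all the work was front-loaded there.

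For the forward direction I expect the main obstacle to be the \emph{uniformity} of the nilpotence bound, i.e. producing a single $n$ such that $\mathfrak{G}(V) \hookrightarrow (\fieldp[\mathfrak{G}]/I^n\mathfrak{G})(V)$ is injective for \emph{all} $V$ simultaneously. On a fixed $\fieldp^t$ this is Lemma \ref{lem:finite_p-gp_nilp}, but a priori the nilpotence index $k_V$ of $I\mathfrak{G}(V)$ could grow with $\dim V$. The resolution should come from $p$-finiteness: the condition $\Phi_N\mathfrak{G}=0$ for $N\gg 0$ bounds the length of the $p$-derived series uniformly, and the associated graded $\bigoplus_n \Phi_n\mathfrak{G}/\Phi_{n+1}\mathfrak{G}$ being a finite functor bounds each layer. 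Since the order of $\mathfrak{G}(V)$ is controlled by a polynomial (the growth function), and the augmentation-ideal nilpotence index of a $p$-group is controlled by its order, one gets a uniform bound $n$. Making this quantitative comparison between the $I$-adic filtration and the Frattini filtration precise is where care is needed, though the essential mechanism is standard group-ring theory (as in \cite{Passi}).

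Rather than re-deriving the embedding by hand, the cleanest writeup invokes the characterisation already proved: by Proposition \ref{prop:degree_prshv} it is enough to show $\mathfrak{G}$ takes values in finite sets and that $\mathfrak{G}\hookrightarrow q_n\fieldp[\mathfrak{G}]$ is a monomorphism for some $n$, and the latter follows once the growth function is polynomial via Theorem \ref{thm:p-finite_equiv}. Consequently both halves reduce to the single equivalence between polynomial growth and $p$-finiteness, so the proof becomes essentially a bookkeeping exercise routing through Theorem \ref{thm:p-finite_equiv}, Proposition \ref{prop:finite_growth_poly}, and Corollary \ref{cor:finite_generation_implies_finite}.
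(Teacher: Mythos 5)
Your backward implication (finite presheaf $\Rightarrow$ $p$-finite) is correct and is exactly the paper's: a one-line appeal to Corollary \ref{cor:finite_generation_implies_finite}. The forward implication is where the proposal breaks down, in two places. First, your ``cleanest writeup'' is circular: Theorem \ref{thm:p-finite_equiv} equates $p$-finiteness with polynomial growth of $\gamma_{\mathfrak{G}}$, but polynomial growth of the growth function does \emph{not} imply that a presheaf embeds in a finite functor --- Example \ref{exam:split_rank} is precisely a counterexample for set-valued presheaves. The content of Theorem \ref{thm:pfin_finite} is that the group structure rescues this implication, so no argument that only sees the growth function can succeed; the statement cannot ``reduce to the single equivalence between polynomial growth and $p$-finiteness.''

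Second, your explicit embedding $\mathfrak{G}\hookrightarrow \fieldp[\mathfrak{G}]/I^n\mathfrak{G}$ requires a \emph{uniform} $n$ such that the $n$-th dimension subgroup $D_n(\mathfrak{G}(V))=\mathfrak{G}(V)\cap(1+I^n)$ is trivial for all $V$, and the mechanism you propose for uniformity --- the nilpotence index of $I\mathfrak{G}(V)$ is controlled by the order of $\mathfrak{G}(V)$, which is controlled by the growth polynomial --- yields a bound that \emph{grows} with $\dim V$, not a uniform one. (Already for $\mathfrak{G}=\mathrm{id}$ the nilpotence index of $I\mathfrak{G}(V)$ grows linearly in $\dim V$; this is why the proof of Proposition \ref{prop:gp_ring_coanalytic} is careful to let $k_V$ depend on $V$ and only claims coanalyticity, not finiteness, of $\fieldp[\mathfrak{G}]$.) To make your route work one would need to bound the Jennings filtration uniformly, hence bound the nilpotency class and exponent of all $\mathfrak{G}(V)$ at once; none of this is supplied. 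The paper avoids the issue entirely by a different argument: it sets $H=\kappa\mathfrak{G}$ (legitimate because $\fieldp[\mathfrak{G}]$ is coanalytic, so $\g H\cong\mathfrak{G}$), then inducts on the length of the Frattini series of $\mathfrak{G}$ using the correspondence between short exact sequences in $\hopfkc$ and short exact sequences of group-valued presheaves (Lemma \ref{lem:ses_Hopf_group}) together with Proposition \ref{prop:hopf_A_fg}, and concludes via Corollary \ref{cor:equivalent_finite_fgmodnil}; the step $\g H''\cong\g H$ relies crucially on having exact sequences of groups rather than pointed sets. You should either adopt that inductive Hopf-algebra argument or supply the missing uniform dimension-subgroup bound.
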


\begin{proof}
 The implication $\Leftarrow$ is given by Corollary
\ref{cor:finite_generation_implies_finite}. 
 
 For  $\Rightarrow$, since $\fieldp [\mathfrak{G}] \in \f$ is 
coanalytic (by Proposition \ref{prop:gp_ring_coanalytic}), it is possible to 
carry out the argument by passing to unstable (Hopf) algebras. Namely, consider 
the unstable algebra $H:=\kappa \mathfrak{G}$; by Proposition 
\ref{prop:finite-type_kappa}, this has finite type. 
It follows that the group structure of $\mathfrak{G}$ gives $H$ the structure 
of 
a Hopf algebra in $\unstalg$ and, by construction, $\g H \cong \mathfrak{G}$
as $p$-group valued functors.

The proof is by induction upon the length of the $p$-derived series of 
$\mathfrak{G}$, using the correspondence between short exact sequences of Hopf 
algebras and 
short exact sequences of group-valued functors given by Lemma 
\ref{lem:ses_Hopf_group}. If $\Phi \mathfrak{G} = \{e\}$, then $\mathfrak{G}$ 
actually belongs to $\f$ and 
$p$-finite is equivalent to finite. 

For the inductive step, consider the projection $\mathfrak{G} 
\twoheadrightarrow 
\mathfrak{G} / \Phi \mathfrak{G}$. Applying $\kappa$ gives an inclusion of Hopf 
algebras in $\hopfkc$
\[
H' :=  \kappa (\mathfrak{G} / \Phi \mathfrak{G}) \hookrightarrow H
\]
and hence a short exact sequence in $\hopfkc$:
\[
 H' \hookrightarrow H \twoheadrightarrow H\hq H'.
\]
By Lemma \ref{lem:ses_Hopf_group}, one has that $\g (H \hq H') \cong \Phi 
\mathfrak{G}$. Hence, by induction, 
$H'$ and $H\hq H'$ are both $\cala$-finitely generated up to nilpotents. 

Since $H \hq H'$ is $\cala$-finitely generated up to nilpotents, there exists a 
connected unstable algebra $K \subset H$ that is 
$\cala$-finitely generated and such that the composite $K \rightarrow H\hq H'$ 
is an $F$-epimorphism.

  Proposition \ref{prop:hopf_A_fg} provides a sub unstable Hopf algebra $H_K 
\subset H$ that is $\cala$-finitely generated as an unstable algebra and such 
that $K \subset H_K$ (as unstable algebras).
  
 Let $H'' \subset H$ denote the sub Hopf algebra  image in $\hopfkc$ of
 \[
  H' \otimes H_K \rightarrow H.
 \]
By construction, $H''$ is $\cala$-finitely generated and the cokernel $H\hq 
H''$ is nilpotent. 

Applying the functor $\g$ to the short exact sequence of $\hopfkc$ 
\[
 H'' \hookrightarrow H \rightarrow H\hq H'',
\]
by Lemma \ref{lem:ses_Hopf_group} implies that $\g H'' \cong \g H$; since $\g 
(H 
\hq H') = *$.
 (This step relies crucially on having a short exact sequence of groups, not 
just pointed sets.)

This implies that $H$ is $\cala$-finitely generated modulo nilpotents, by 
Corollary 
\ref{cor:equivalent_finite_fgmodnil}. 
\end{proof}

\begin{cor}
 For $H \in \ob \hopfkc$, the underlying unstable algebra of $H$ is 
$\cala$-finitely generated up to nilpotents 
 if and only if $\gamma H$ is $p$-finite. 
 
 If $H$ satisfies the above conditions and $K\subset H$ is a sub Hopf algebra 
in 
unstable algebras, then $K$ is $\cala$-finitely generated up to nilpotents.
\end{cor}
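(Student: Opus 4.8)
The plan is to obtain the first equivalence by assembling the results already established, and to reduce the second statement to the first by a growth-function comparison. For the forward direction of the first statement, I would suppose the underlying unstable algebra of $H$ is $\cala$-finitely generated up to nilpotents. Then Proposition~\ref{prop:gH_values_finite_p-groups} shows that $\g H$ takes values in finite $p$-groups; since $H$ is connected, $\g H(0) = \{e\}$, so $\g H \in \ob \fpg$. By Corollary~\ref{cor:equivalent_finite_fgmodnil} the underlying presheaf of $\g H$ is finite, and hence Theorem~\ref{thm:pfin_finite} gives that $\g H$ is $p$-finite.

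Conversely, if $\g H$ is $p$-finite, then in particular $\g H \in \ob \fpg$ with $\g H(0) = \{e\}$, and Theorem~\ref{thm:pfin_finite} shows that the underlying presheaf of $\g H$ in $\prshv$ is finite; Corollary~\ref{cor:equivalent_finite_fgmodnil} then yields that $H$ is $\cala$-finitely generated up to nilpotents. This completes the first statement.

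For the second statement, let $K \subset H$ be a sub Hopf algebra; as a sub Hopf algebra of the connected $H$, $K$ lies in $\hopfkc$. Since $\g H$ is $p$-finite it takes values in finite sets, so Lemma~\ref{lem:ses_Hopf_group} applied to $K \hookrightarrow H \twoheadrightarrow H \hq K$ produces a short exact sequence of presheaves of finite groups
\[
 \g(H \hq K) \hookrightarrow \g H \twoheadrightarrow \g K.
\]
In particular $\g K$ is a quotient of $\g H$, so it takes values in finite $p$-groups with $\g K(0)=\{e\}$, whence $\g K \in \ob \fpg$. Surjectivity of $\g H(\fieldp^t) \twoheadrightarrow \g K(\fieldp^t)$ for each $t$ gives $\gamma_{\g K}(t) \leq \gamma_{\g H}(t)$, and the right-hand side is $O(t^d)$ for some $d \in \nat$ by the $p$-finiteness of $\g H$ (Theorem~\ref{thm:p-finite_equiv}); thus $\gamma_{\g K}(t) = O(t^d)$, and Theorem~\ref{thm:p-finite_equiv} shows $\g K$ is $p$-finite. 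The first statement, now applied to $K \in \ob \hopfkc$, finishes the argument.

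The step requiring the most care is the surjectivity of $\g H \twoheadrightarrow \g K$: this is exactly what the Hopf structure buys, via the genuine short exact sequence of \emph{groups} of Lemma~\ref{lem:ses_Hopf_group}, and is what validates the growth comparison. For an arbitrary sub unstable algebra $K \subset H$ the induced map $\g H \to \g K$ is only a map of pointed sets and need not be surjective, since extending a homomorphism $K \to H^*(BV)$ over $H$ is governed by an extension problem; thus the argument genuinely uses that $K$ is a sub Hopf algebra rather than merely a sub unstable algebra.
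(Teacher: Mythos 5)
Your proof is correct and is exactly the assembly of Corollary \ref{cor:equivalent_finite_fgmodnil}, Proposition \ref{prop:gH_values_finite_p-groups}, Theorem \ref{thm:pfin_finite}, Lemma \ref{lem:ses_Hopf_group} and Theorem \ref{thm:p-finite_equiv} that the paper intends (it states the corollary without proof as an immediate consequence of these, reading $\gamma H$ as $\g H$). Your closing remark correctly identifies the role of the Hopf structure: the surjectivity $\g H \twoheadrightarrow \g K$ and the passage through $p$-finiteness (via growth or composition series) are what circumvent the fact that quotient presheaves of finite presheaves need not be finite.
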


\section{Unstable Hopf algebras modulo nilpotents}

\subsection{Coanalytic presheaves of  $p$-profinite groups}

The material of Section \ref{sect:fun_fpg} leads to the appropriate notion of 
profinite $p$-group valued 
functors.

\begin{nota}
 \ 
 \begin{enumerate}
  \item 
  Let $\pgfin \subset \fpg$ denote the full subcategory of $p$-finite objects. 
  \item 
  For $G\in \ob \gpsc$ that takes values in profinite $p$-groups, let 
$G/\pgfin$ be the full subcategory of 
  $G/ \gpsc$ with objects morphisms $G \rightarrow G'$ with $G' \in \ob \pgfin$ 
that are continuous group morphisms on sections 
  and such that morphisms are commutative diagrams 
  \[
   \xymatrix{
   & G 
   \ar[rd]
   \ar[ld]
   \\
   G' 
   \ar[rr]
   &&
   G'',
   }
  \]
where $G' \rightarrow G''$ is a morphism of $\pgfin$.
\item 
For $G$ as above, set  $G^\omega := 
  \lim_{\substack{\leftarrow \\ G \rightarrow G' \in G/\pgfin}} G',
$ equipped with the canonical map $G\rightarrow G^\omega$. 
\end{enumerate}
\end{nota}

\begin{rem}
Let $G(i)$ be a diagram of $\pgfin$ indexed by a small, cofiltering category 
$\cali$. Then 
 $
  \lim_{\substack{\longleftarrow\\ i \in \cali}} G(i)
 $ 
has underlying presheaf in $\gcoan$ and takes values in $p$-profinite groups. 
 \end{rem}

\begin{defn}
\ 
\begin{enumerate}
 \item 
  For $G\in \ob \gpsc$ which takes values in profinite $p$-groups, say that $G$ 
is $p$-coanalytic if the canonical map $G \rightarrow G^\omega$ is an 
isomorphism. 
 \item 
  Let $\pgpro$ for the category of connected, $p$-coanalytic sheaves and 
continuous group morphisms, equipped with the forgetful functor $\pgpro 
\rightarrow \gpsc$. 
\end{enumerate}
\end{defn}

\begin{rem}
 A $p$-coanalytic presheaf is, in particular, coanalytic. Hence the forgetful 
functor induces $\pgpro \rightarrow (\gcoan)_c$. 
\end{rem}

\begin{lem}\label{lem:g_kappa_Hopf}
\ 
\begin{enumerate}
 \item 
For $H \in \ob \hopfkc$,  $\g H$ is $p$-coanalytic and $\g$ induces a functor 
$\hopfkc \rightarrow \pgpro$. 
\item 
For $G \in \ob \pgpro$ $\kappa G \in \ob \hopfkc$ and $\kappa$ induces a 
functor 
$\pgpro \rightarrow \hopfkc$. 
\end{enumerate}
\end{lem}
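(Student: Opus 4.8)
The plan is to establish the two functors separately, in each case reducing to the $p$-finite building blocks via a (co)filtered (co)limit and then invoking the structure theory already in place, principally Theorem \ref{thm:pfin_finite}. Throughout, $\g$ converts $\otimes$ into $\times$ (the natural isomorphism $\g(K\otimes L)\cong \g K\times \g L$ used in Lemma \ref{lem:ses_Hopf_group}), so that the cogroup structure of $H$ transports to a group structure on $\g H$, and dually the group structure of $G$ makes $\kappa G$ a Hopf algebra.

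For (1), I would first write $H$ as the filtered colimit of its $\cala$-finitely generated sub Hopf algebras $H_i$: by Proposition \ref{prop:hopf_A_fg} these are cofinal among $\cala$-finitely generated sub unstable algebras, so Proposition \ref{prop:colimit_Afg} gives $H=\lim_{\rightarrow i}H_i$ and hence $\g H\cong \lim_{\leftarrow i}\g H_i$. Each $\g H_i$ lies in $\pgfin$: it takes values in finite $p$-groups by Proposition \ref{prop:gH_values_finite_p-groups} and is $p$-finite by Corollary \ref{cor:finite_generation_implies_finite}. This simultaneously shows that $\g H$ takes values in $p$-profinite groups and exhibits it as a cofiltered limit of objects of $\pgfin$.

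The substantive point is then $p$-coanalyticity, i.e. that $\g H\rightarrow (\g H)^\omega$ is an isomorphism; granting that the cone $\{\g H\rightarrow \g H_i\}$ is cofinal in $\g H/\pgfin$, the argument is identical to Proposition \ref{prop:coanalytic_vs_coanalytic_completion}. Cofinality amounts to showing that any morphism $\g H\rightarrow G'$ with $G'\in\ob\pgfin$ factors through some $\g H_i$, and this is the step I expect to be the main obstacle. I would treat it as a compactness statement: by Theorem \ref{thm:pfin_finite} the underlying presheaf of $G'$ is finite, so it has a bounded rank filtration $G'=G'_{\leq N}$, and by Proposition \ref{prop:rank_filtration} and Lemma \ref{lem:presheaf_rank_filt} such a finite presheaf is finitely copresented, being reconstructed from its values on $\fieldp^k$ for $k\leq N$. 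Each component $\g H(\fieldp^k)=\lim_{\leftarrow i}\g H_i(\fieldp^k)\rightarrow G'(\fieldp^k)$ is a continuous map of profinite sets into a finite set, hence factors through a finite stage; choosing a single $i$ dominating the finitely many $k\leq N$ yields the required factorization, the relevant morphism sets being finite by Corollary \ref{cor:finite_morphism_sets}. Functoriality is then formal: a morphism of $\hopfkc$ induces a continuous group morphism between the associated cofiltered limits, hence a morphism of $\pgpro$.

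For (2), let $G\in\ob\pgpro$, so by definition $G\cong\lim_{\leftarrow j}G_j$ with $G_j\in\ob\pgfin$. Applying $\kappa$ degreewise, $(\kappa G)^n=\hom_{\gps}(G,S^n)$; since $S^n$ is a finite functor, the same compactness as above gives $(\kappa G)^n\cong\lim_{\rightarrow j}(\kappa G_j)^n$, so $\kappa G\cong\lim_{\rightarrow j}\kappa G_j$ is a filtered colimit. Each $\kappa G_j$ is a connected unstable Hopf algebra of finite type that is $\cala$-finitely generated up to nilpotents, which is exactly the content extracted in the proof of Theorem \ref{thm:pfin_finite} (using Proposition \ref{prop:finite-type_kappa} for finite type and the group structure of $G_j$ to furnish the coproduct and antipode), and the transition maps induce Hopf algebra maps $\kappa G_j\rightarrow \kappa G_{j'}$. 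As a filtered colimit of connected unstable Hopf algebras along Hopf algebra maps is again a connected unstable Hopf algebra, $\kappa G\in\ob\hopfkc$; functoriality of $\kappa:\pgpro\rightarrow\hopfkc$ then follows by passing the functoriality of $\kappa$ on $\pgfin$ through these colimits.
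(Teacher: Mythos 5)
Your proposal follows the paper's proof exactly in outline: the paper's own argument for (1) is precisely ``$H$ is the colimit of its $\cala$-finitely generated sub Hopf algebras by Proposition \ref{prop:hopf_A_fg}, hence $\g H$ is a cofiltered limit of objects of $\pgfin$,'' and for (2) it is ``the colimit of the diagram $\kappa G_j$ in $\hopfkc$ lies in $\hopfkc$,'' with functoriality declared clear. So the decomposition, the identification of the building blocks via Proposition \ref{prop:gH_values_finite_p-groups} and Corollary \ref{cor:finite_generation_implies_finite}, and the treatment of (2) all match.

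The one place you go beyond the paper is the cofinality of $\{\g H \to \g H_i\}$ in $\g H/\pgfin$, which the paper leaves implicit (it is the same issue it declares ``clear from the definitions'' in Proposition \ref{prop:coanalytic_vs_coanalytic_completion}), and there your justification is not quite the right mechanism. The bounded rank filtration $G'=G'_{\leq N}$ of a finite presheaf, via Proposition \ref{prop:rank_filtration} and Lemma \ref{lem:presheaf_rank_filt}, expresses that $G'$ is \emph{generated} in ranks $\leq N$; this controls maps \emph{out of} $G'$, not maps \emph{into} it, so it does not by itself show that a morphism $\g H \to G'$ is determined by (or can be built from) its components on $\fieldp^k$, $k\leq N$ --- note also that $\g H$ itself need not have bounded rank filtration. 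What one actually needs is that $G'$ has finite \emph{degree} $n$ in the sense of Proposition \ref{prop:degree_prshv}, so that $G'$ embeds in a polynomial functor of degree $\leq n$ and morphisms into $G'$ are detected by their value at $\fieldp^n$ (the determination of degree-$\leq n$ objects by their $\End(\fieldp^n)$-set of sections, as in Theorem \ref{thm:HLS_trans_deg}); one must then still promote the levelwise factorization of $\g H(\fieldp^n)\to G'(\fieldp^n)$ through $\g H_i(\fieldp^n)$ to an actual natural transformation $\g H_i \to G'$ (e.g.\ after replacing $\g H_i$ by the image of $\g H$), a step your sketch elides. Since the paper itself treats this completion formalism as standard, this is a repairable imprecision rather than a fatal flaw, but as written the compactness step does not go through for the reason stated.
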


\begin{proof}
For the first statement, use the fact that $H$ is the colimit of its sub-Hopf 
algebras in $\unstalg$ that are $\cala$-finitely generated, by Proposition 
\ref{prop:hopf_A_fg}. The second is clear; the colimit of the associated diagram 
in $\hopfkc$ lies in 
$\hopfkc$. The functoriality is clear in both cases. 
\end{proof}

\subsection{Connected unstable Hopf algebras modulo nilpotents}

The analogue of Theorem \ref{thm:HLS_improved} for connected unstable Hopf 
algebras is now essentially tautological.

\begin{nota}
 Let $\hopfkc / \nil$ denote the localization, defined as for $\unstalg/ \nil$ 
(cf. \cite[Part II.1]{HLS} and Section \ref{subsect:unstalg_prelim}).
\end{nota}

\begin{thm}
\label{thm:g_Hopf_fun-pfg}
The functor $\g$ induces an equivalence of categories 
\[
 (\hopfkc/ \nil)\op 
 \stackrel{\cong}{\rightarrow}
 \pgpro.
\]
\end{thm}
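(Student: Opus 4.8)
The plan is to deduce the equivalence directly from Theorem \ref{thm:HLS_improved}, exploiting the observation that a connected unstable Hopf algebra is precisely a connected cogroup object of $\unstalg$, whilst an object of $\pgpro$ is a connected group object of $\gcoan$ taking values in profinite $p$-groups. The bridge is that $\g$ converts tensor products to products, $\g(K\otimes L)\cong\g K\times\g L$; thus the equivalence $\g:(\unstalg/\nil)\op\stackrel{\cong}{\rightarrow}\gcoan$ is product-preserving, and a product-preserving equivalence automatically induces an equivalence between the categories of group objects on either side.

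First I would record that $\g$ and $\kappa$ restrict to the structured functors $\g:\hopfkc\rightarrow\pgpro$ and $\kappa:\pgpro\rightarrow\hopfkc$ of Lemma \ref{lem:g_kappa_Hopf}, and that $\g$ sends $F$-isomorphisms of $\hopfkc$ to isomorphisms (as it already does on underlying unstable algebras, by Proposition \ref{prop:g_factors_nil}), so that it factors through the localization $\hopfkc/\nil$. Then I would revisit the two natural transformations witnessing the equivalence of Theorem \ref{thm:HLS_improved}: the unit $K\rightarrow\kappa\g K$, which is an $F$-isomorphism for every $K\in\ob\unstalg$ by Proposition \ref{prop:kappa_g}, and the identification $G\cong G^\omega\cong\g\kappa G$ valid for $G$ coanalytic. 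The crux is to check that, for $H\in\ob\hopfkc$, the unit $H\rightarrow\kappa\g H$ is a morphism of $\hopfkc$ and not merely of unstable algebras, and dually that, for $G\in\ob\pgpro$, the identification $G\cong\g\kappa G$ is an isomorphism of group-valued presheaves. Granting this, the unit becomes an isomorphism in $\hopfkc/\nil$ and the counit is already an isomorphism in $\pgpro$, so $\g$ and $\kappa$ are mutually inverse.

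The main obstacle is precisely this compatibility with the (co)group structure, which rests on $\kappa$ taking products to tensor products: one needs a natural isomorphism $\kappa(G\times G')\cong\kappa G\otimes\kappa G'$ dual to $\g(K\otimes L)\cong\g K\times\g L$, so that the comultiplication $\g H\rightarrow\g H\times\g H$ is carried by $\kappa$ to a comultiplication on $\kappa\g H$ compatible, under the unit, with $\Delta$ on $H$. With this in hand, verifying that the unit respects $\Delta$ and the conjugation $\chi$ is a diagram chase using naturality of the unit; the dual statement for the counit is symmetric.

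Finally I would confirm the two identifications of group objects. On the algebra side, the connected cogroup objects of $\unstalgc/\nil$ are exactly the objects of $\hopfkc/\nil$, using that the Hopf structure descends along $F$-isomorphisms (Lemma \ref{lem:prim_mod_nil} and Proposition \ref{prop:hopf_F_iso_primitive_filt}). On the presheaf side, the connected group objects of $\gcoan$ valued in profinite $p$-groups are exactly the $p$-coanalytic functors comprising $\pgpro$; this is supplied by Lemma \ref{lem:g_kappa_Hopf} together with the theory of Section \ref{sect:fun_fpg}, in particular the fact that every such $\g H$ is $p$-coanalytic. Assembling these identifications with the structured unit and counit yields the asserted equivalence $(\hopfkc/\nil)\op\stackrel{\cong}{\rightarrow}\pgpro$.
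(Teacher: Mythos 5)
Your second and third paragraphs reproduce, in more detail, the paper's own proof: Lemma \ref{lem:g_kappa_Hopf} provides the restricted functors, the unit $H \rightarrow \kappa\g H$ is an $F$-isomorphism \emph{of Hopf algebras} (Proposition \ref{prop:kappa_g} together with the K\"unneth-type isomorphism $\kappa(G\times G')\cong \kappa G\otimes\kappa G'$, which you correctly isolate as the crux), and the counit $G\rightarrow\g\kappa G$ is an isomorphism of group-valued presheaves because $p$-coanalytic implies coanalytic (Theorem \ref{thm:HLS_improved}). Had the proof ended there, it would be correct and essentially identical to the paper's.

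It does not end there: your opening and closing paragraphs make the conclusion rest on two identifications of group objects, and both are genuine gaps. First, the claim that $\hopfkc/\nil$ coincides with the connected cogroup objects of $\unstalgc/\nil$ is a nontrivial rigidification statement: localization at $F$-isomorphisms is not known to commute with taking cogroup objects, so one would need to show that every cogroup structure in the localized category lifts to an honest unstable Hopf algebra, and that localized morphisms respecting the costructure are represented by zigzags of Hopf morphisms. The results you cite in support (Lemma \ref{lem:prim_mod_nil}, Proposition \ref{prop:hopf_F_iso_primitive_filt}) concern the behaviour of the primitive filtration along $F$-isomorphisms and say nothing about this. Second, the claim that $\pgpro$ is exactly the category of connected group objects of $\gcoan$ valued in profinite $p$-groups requires the converse of the paper's remark that $p$-coanalytic implies coanalytic: one must show that a coanalytic group object valued in profinite $p$-groups has cofinally many $p$-finite \emph{group} quotients among its finite presheaf quotients. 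Lemma \ref{lem:g_kappa_Hopf} gives $p$-coanalyticity only for presheaves of the form $\g H$ with $H \in \ob \hopfkc$; invoking it for an arbitrary group object of $\gcoan$ presupposes exactly the equivalence being proved, so your final assembly is circular. The repair is simply to discard the group-object framing and let the unit/counit argument of your middle paragraphs stand on its own.
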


\begin{proof}
From the construction and from Lemma \ref{lem:g_kappa_Hopf}, it is clear that 
$\g$ induces a functor  $\hopfkc / \nil \rightarrow \pgpro$. The inverse 
functor 
is induced by $\kappa$. 
\end{proof}

\begin{exam}
 If $H \in \hopfkc$ is  primitively-generated, then $\g H$ is a constant-free 
functor of $\f$ that is dual to a locally finite functor (considered as a 
pro-object). 
\end{exam}

\appendix
\section{Recollections on polynomial functors}
\label{sect:poly}

\subsection{Definitions}

Let $(\calc, 0)$ be a small pointed category, equipped with symmetric monoidal 
structure $(\calc, \odot , 0)$ for which $0$ is the unit. Following Baues and 
Pirashvili \cite{BP}, consider the category of functors 
$\grp^\calc$ from $\calc$ to the category of groups $\grp$, and the associated 
notion of polynomial functor. The trivial group is written $e$. 

\begin{defn}
\label{def:reduced}
 A functor $F \in \ob \grp^\calc$ is  constant-free if $F (0) = e$. 
\end{defn}

The cross-effects of \cite{BP} generalize the Eilenberg-MacLane notion of 
cross-effect from the abelian setting  \cite{EM}. 

\begin{defn}
 \label{def:cross-effects}
For $1 \leq n \in \nat$, let $\cre_n : \grp ^\calc \rightarrow 
\grp^{\calc^{\times n}}$ 
denote the functors defined recursively by 
\begin{enumerate}
 \item 
 $\cre_1 F (C) := \ker \{ F (C) \rightarrow F (0)  \}$;
 \item 
 $\cre_2 F (C, D) := \ker \{ \cre_1 F (C \odot D) \rightarrow \cre_1 F (C) 
\times \cre_1 F (D) \}$ and
 \item 
 $\cre_{n+1}F (C_1, C_2, \ldots, C_{n+1}) := \cre_2\big ( \cre_n  F( - , C_3 , 
\ldots , C_{n+1})\big)(C_1, C_2)$,  for $n \geq 2$.  
\end{enumerate}
(Here the morphisms are induced by the projections to $0$, considered as the 
terminal object of $\calc$.)
 \end{defn}

 The following is standard, and allows usage of $\cre_1$ to be avoided when 
considering constant-free functors. 
 
\begin{lem}
\label{lem:reduced_cre}
 If $F \in \grp^\calc$ is constant-free, then 
 \begin{enumerate}
  \item 
  $\cre_1 F \cong F$; 
  \item 
  the functor $C \mapsto \cre_2 F (C, D) $ is constant-free for any $D \in \ob \calc$.
 \end{enumerate}
\end{lem}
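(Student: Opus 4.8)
The plan is to prove the final statement, Lemma \ref{lem:reduced_cre}, which concerns cross-effects of a constant-free functor $F \in \grp^\calc$. The two assertions are essentially unwinding the recursive Definition \ref{def:cross-effects} in the presence of the hypothesis $F(0) = e$.

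For part (1), I would simply evaluate the definition of $\cre_1$. By construction, $\cre_1 F(C) = \ker\{ F(C) \rightarrow F(0)\}$, where the map is induced by the projection $C \rightarrow 0$ to the terminal object. Since $F$ is constant-free, $F(0) = e$ is the trivial group, so the kernel of the map $F(C) \rightarrow e$ is all of $F(C)$. This gives the natural isomorphism $\cre_1 F \cong F$, with naturality in $C$ inherited from the naturality of the defining kernel. This step is routine and poses no obstacle.

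For part (2), I would fix $D \in \ob \calc$ and examine $C \mapsto \cre_2 F(C,D)$. By definition, $\cre_2 F(C,D) = \ker\{ \cre_1 F(C \odot D) \rightarrow \cre_1 F(C) \times \cre_1 F(D)\}$; using part (1), since $F$ is constant-free, this rewrites as $\ker\{ F(C \odot D) \rightarrow F(C) \times F(D)\}$. To verify constant-freeness in the variable $C$, I would evaluate at $C = 0$. Since $0$ is the unit of the monoidal structure $(\calc, \odot, 0)$, there is a natural isomorphism $0 \odot D \cong D$, so the source becomes $F(D)$, and one checks that the map $F(D) \rightarrow F(0) \times F(D) \cong F(D)$ (again using $F(0) = e$) is the identity on the relevant factor. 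Hence the kernel is trivial, giving $\cre_2 F(0, D) = e$, which is exactly the assertion that $C \mapsto \cre_2 F(C,D)$ is constant-free.

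The only point requiring mild care is the identification of the map $\cre_2 F(0,D) \rightarrow e$ at $C = 0$: one must confirm that under the unit isomorphism $0 \odot D \cong D$, the projection-induced map $F(0 \odot D) \rightarrow F(0) \times F(D)$ corresponds to $F(D) \rightarrow e \times F(D)$ sending $x \mapsto (e, x)$, so that its kernel is indeed trivial. This is a compatibility between the monoidal unit and the functoriality of $F$, and I expect it to be the main (though still elementary) obstacle; everything else is a direct unwinding of definitions. Given the lemma is labelled standard, a brief proof recording these two evaluations suffices.
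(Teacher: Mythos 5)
Your proof is correct: both parts are the direct unwinding of Definition \ref{def:cross-effects} under the hypothesis $F(0)=e$, which is exactly the ``standard'' argument the paper has in mind (it states the lemma without proof). The one point you flag --- that under the unit isomorphism $0\odot D\cong D$ the projection-induced map becomes $x\mapsto(e,x)$ with trivial kernel --- is indeed the only detail to check, and you handle it correctly.
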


\begin{lem}
 \label{lem:equivalent_cre_reduced}
 If $F \in \ob \grp ^\calc$ is constant-free, then there is a natural isomorphism:
 \[
 \cre_n F (C_1 , \ldots , C_n)
 \cong
 \ker 
 \big\{
 F(C_1 \odot \ldots \odot C_n) 
 \rightarrow 
 \prod_{i=1}^n 
 F (C_1 \odot \ldots \odot \widehat{C_i} \odot \ldots \odot C_n) 
 \big\},
 \]
where $\widehat{C_i}$ indicates that the term is omitted.
\end{lem}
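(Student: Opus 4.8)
The plan is to argue by induction on $n$, using the recursive definition of the cross-effects together with Lemma \ref{lem:reduced_cre}. For the base case $n=1$, Lemma \ref{lem:reduced_cre}(1) gives $\cre_1 F \cong F$; interpreting the empty monoidal product as the unit $0$, the right-hand side is $\ker\{F(C_1) \to F(0)\}$, which is all of $F(C_1)$ since $F(0)=e$. (The case $n=2$ is likewise immediate from the definition of $\cre_2$ and the identification $\cre_1 F \cong F$, and it already records the shape of the general argument.)

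For the inductive step, suppose the isomorphism holds for $n$ and write $\cre_{n+1} F(C_1, \ldots, C_{n+1}) = \cre_2(G)(C_1, C_2)$, where $G := \cre_n F(-, C_3, \ldots, C_{n+1})$ is regarded as a functor of the first variable. First I would check that $G$ is constant-free: evaluating the inductive formula at $C = 0$, the map omitting the (now trivial) first argument is induced by the isomorphism $0 \odot C_3 \odot \cdots \odot C_{n+1} \cong C_3 \odot \cdots \odot C_{n+1}$, so its kernel, which contains $G(0)$, is trivial. Hence Lemma \ref{lem:reduced_cre}(1) applies to $G$ and yields $\cre_2 G(C_1, C_2) = \ker\{G(C_1 \odot C_2) \to G(C_1) \times G(C_2)\}$, the two maps being induced by the projections killing $C_2$ and killing $C_1$ respectively.

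The heart of the argument is then to unwind this kernel using the inductive description of $G$. Applying the inductive hypothesis to $G(C_1 \odot C_2)$ exhibits it as the subgroup of $F(C_1 \odot \cdots \odot C_{n+1})$ of elements killed by omitting $C_1 \odot C_2$ and killed by omitting each $C_j$ with $j \geq 3$; intersecting with the kernels of the maps to $G(C_1)$ and $G(C_2)$ adjoins the conditions of being killed by omitting $C_2$ and by omitting $C_1$, since $G(C_1) \subset F(C_1 \odot C_3 \odot \cdots \odot C_{n+1})$ is a subgroup and the structure maps are restrictions of the corresponding maps on $F$. The one remaining point is that the condition ``killed by omitting $C_1 \odot C_2$'' is redundant: the morphism of $\calc$ that kills $C_1 \odot C_2$ factors as the composite of killing $C_2$ followed by killing $C_1$, so by functoriality of $F$ this condition already follows from being killed by omitting $C_2$. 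The surviving conditions, namely being killed by omitting each of $C_1, C_2, C_3, \ldots, C_{n+1}$ individually, are exactly those defining the claimed kernel, and naturality is inherited since every map involved is natural.

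The main obstacle I anticipate is the bookkeeping in the final step: one must keep straight the $n+1$ ``omit one factor'' maps arising from the iterated $\cre_2 \circ \cre_n$ construction and verify both that no defining condition is lost and that the single superfluous condition (omitting $C_1 \odot C_2$) is absorbed by functoriality. Everything else is formal manipulation of kernels of natural transformations of group-valued functors.
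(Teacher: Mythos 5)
Your proposal is correct and follows essentially the same route as the paper: induction on $n$ with the base case read off from the definition of $\cre_2$ and $\cre_1 F \cong F$, and the inductive step unwinding $\cre_2(\cre_n F(-,C_3,\ldots,C_{n+1}))$; the paper merely defers this step to \cite[Lemma 1.8]{BP}, whereas you carry out the bookkeeping (including the useful observations that $G$ is constant-free and that the ``omit $C_1 \odot C_2$'' condition is absorbed by functoriality) explicitly.
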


\begin{proof}
 The result is proved by induction on $n$. For $n=2$, by Lemma 
\ref{lem:reduced_cre}, this follows from the Definition of $\cre_2$. 
 For $n>2$, the inductive step proceeds as for the case $n=3$, which is 
\cite[Lemma 1.8]{BP}.
\end{proof}

\begin{defn}
\label{def:poly_general}
 For $n \in \nat$, a functor $F \in \ob \grp ^\calc$ is polynomial of degree 
$\leq n$ if $\cre_{n+1} F= e$ is the constant functor. 
\end{defn}

The following is an immediate consequence of Lemma 
\ref{lem:equivalent_cre_reduced}:

\begin{lem}
\label{lem:equivalent_reduced_poly}
 For $n \in \nat$, a constant-free functor $F \in \ob \grp ^\calc$ is polynomial of 
degree $\leq n$ if and only if the natural transformation
 \[
  F(C_1 \odot \ldots \odot C_{n+1}) 
 \rightarrow 
 \prod_{i=1}^{n+1} 
 F (C_1 \odot \ldots \odot \widehat{C_i} \odot \ldots \odot C_{n+1}) 
 \]
is injective.
\end{lem}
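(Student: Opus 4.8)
The plan is to read the equivalence off directly from the cross-effect formula supplied by Lemma \ref{lem:equivalent_cre_reduced}, so that no new computation is required. By Definition \ref{def:poly_general}, the functor $F$ is polynomial of degree $\leq n$ exactly when the $(n+1)$st cross-effect $\cre_{n+1} F$ is the constant functor with value the trivial group $e$. The hypothesis that $F$ is constant-free is precisely what is needed to invoke Lemma \ref{lem:equivalent_cre_reduced}: applying that lemma with $n+1$ arguments identifies $\cre_{n+1} F (C_1, \ldots, C_{n+1})$ naturally with the kernel of the map
\[
 F(C_1 \odot \ldots \odot C_{n+1})
 \rightarrow
 \prod_{i=1}^{n+1}
 F (C_1 \odot \ldots \odot \widehat{C_i} \odot \ldots \odot C_{n+1}).
\]

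The remaining step is purely formal. In $\grp$ a homomorphism is injective if and only if its kernel is the trivial subgroup $e$. Hence $\cre_{n+1} F (C_1, \ldots, C_{n+1}) = e$ for every tuple $(C_1, \ldots, C_{n+1})$ of objects of $\calc$ if and only if the displayed natural transformation is objectwise injective. Combining this with the identification of $\cre_{n+1} F$ above yields the asserted equivalence.

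There is essentially no obstacle here, the statement being flagged in the text as an immediate consequence of Lemma \ref{lem:equivalent_cre_reduced}. The only points deserving a word of care are that the constant-free hypothesis is exactly what licenses the use of Lemma \ref{lem:equivalent_cre_reduced}, and that the condition $\cre_{n+1} F = e$ is to be read as holding for \emph{all} arguments simultaneously, which matches the quantification over tuples implicit in asserting that the natural transformation is injective.
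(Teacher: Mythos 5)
Your proof is correct and follows exactly the route the paper intends: the paper states the lemma as an immediate consequence of Lemma \ref{lem:equivalent_cre_reduced}, and your argument spells out precisely that identification of $\cre_{n+1}F$ as the kernel, combined with Definition \ref{def:poly_general} and the fact that a group homomorphism is injective if and only if its kernel is trivial. Nothing to add.
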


\subsection{Exactness of cross-effects}

\begin{defn}
 For $\mathscr{D}$ a small category, a sequence of functors of 
$\grp^{\mathscr{D}}$, $F_1 \rightarrow F_2 \rightarrow F_3$ is short exact if, 
for all $D \in \ob \mathscr{D}$, 
 \[
  F _1 (D) \rightarrow F_2 (D) \rightarrow F_3 (D)
 \]
is a short exact sequence of groups. 
\end{defn}

The following generalizes the standard result for functors to abelian 
categories:

\begin{prop}
\label{prop:exact_cre}
 Let $1 \leq n \in \nat$. If 
 $
  K 
  \rightarrow 
  G \rightarrow 
  Q 
 $ 
is a short exact sequence of $\grp^\calc$, then 
\[
  \cre_n  K 
  \rightarrow 
 \cre_n G 
  \rightarrow 
 \cre_n Q 
\]
is a short exact sequence of $\grp^{\calc^{\times n}}$.
\end{prop}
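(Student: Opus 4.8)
The plan is to argue by induction on $n$, reducing everything to a single group-theoretic nine lemma. The essential input is the following elementary fact, which I will verify directly: given a commutative diagram of groups with short exact rows
\[
 1 \to A_\bullet \to B_\bullet \to C_\bullet \to 1, \qquad \bullet \in \{K,G,Q\},
\]
if the middle column $B_K \to B_G \to B_Q$ and the right column $C_K \to C_G \to C_Q$ are short exact, then so is the left column $A_K \to A_G \to A_Q$. Injectivity of $A_K \to A_G$ and exactness at $A_G$ follow from a chase using injectivity of $C_K \to C_G$; surjectivity of $A_G \to A_Q$ is obtained by lifting an element of $A_Q$ to $B_G$ and correcting it by an element of $A_K$, whose existence uses surjectivity of $B_G \to B_Q$, exactness of the bottom row data, and surjectivity of $B_K \to C_K$. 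Normality of the image of $A_K$ in $A_G$ is checked by hand, using that $A_\bullet = \ker(B_\bullet \to C_\bullet)$ and that $B_K$ is normal in $B_G$. No abelian hypothesis is needed, and this is where the care lies.

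For $n=1$, I would apply the nine lemma to the rows $1 \to \cre_1 F(C) \to F(C) \to F(0) \to 1$ for $F \in \{K,G,Q\}$, coming from Definition \ref{def:cross-effects}(1). Each row is short exact: the surjection $F(C) \to F(0)$ is split by the map $F(0)\to F(C)$ induced by the unit $0 \to C$, since $0 \to C \to 0$ is the identity of $0$. The middle and right columns are short exact by hypothesis (the given sequence evaluated at $C$ and at $0$), so the nine lemma yields short exactness of $\cre_1 K \to \cre_1 G \to \cre_1 Q$ at $C$.

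For $n=2$, I would apply the nine lemma to the defining sequences
\[
 1 \to \cre_2 F(C,D) \to \cre_1 F(C \odot D) \to \cre_1 F(C) \times \cre_1 F(D) \to 1,
\]
exact by Definition \ref{def:cross-effects}(2). Surjectivity of the right-hand map is supplied by the natural sections $\cre_1 F(C) \to \cre_1 F(C \odot D)$ and $\cre_1 F(D) \to \cre_1 F(C \odot D)$ induced by the unit inclusions $C \cong C \odot 0 \to C \odot D$ and $D \cong 0 \odot D \to C \odot D$: their composites with the two projections are the identity and the trivial map (the latter factoring through $\cre_1 F(0)=e$), so a product of lifts hits any $(x,y)$. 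The middle column is short exact by the case $n=1$ applied at $C \odot D$, and the right column is the product of the two short exact sequences from the case $n=1$ at $C$ and at $D$; the nine lemma then gives short exactness of $\cre_2 K \to \cre_2 G \to \cre_2 Q$.

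Finally, for $n \geq 2$ the recursion of Definition \ref{def:cross-effects}(3) reads $\cre_{n+1}F(C_1,\ldots,C_{n+1}) = \cre_2\big(\cre_n F(-,C_3,\ldots,C_{n+1})\big)(C_1,C_2)$. Fixing $C_3,\ldots,C_{n+1}$ and applying the inductive hypothesis, the one-variable functors $\cre_n K(-,C_3,\ldots,C_{n+1}) \to \cre_n G(-,\ldots) \to \cre_n Q(-,\ldots)$ form a short exact sequence of $\grp^\calc$, and the already-established case of $\cre_2$ then yields short exactness of $\cre_{n+1}K \to \cre_{n+1}G \to \cre_{n+1}Q$ at $(C_1,\ldots,C_{n+1})$. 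I expect the main obstacle to be precisely the non-abelian nine lemma: there is no snake lemma to invoke, so the chases — in particular the normality of the image of $A_K$ in $A_G$ and the bookkeeping of inverses in the surjectivity step — must be carried out explicitly with the group operations.
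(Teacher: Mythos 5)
Your proof is correct and follows essentially the same route as the paper's: both reduce via the recursive definition of cross-effects to the $\cre_2$ case and conclude with the nine ($3 \times 3$) lemma in the category of groups applied to the defining short exact sequence $\cre_2 F(C,D) \rightarrow \cre_1 F(C \odot D) \rightarrow \cre_1 F(C) \times \cre_1 F(D)$. The only differences are matters of detail: you verify the surjectivity of that sequence (which the paper cites from Baues--Pirashvili) and the non-abelian nine-lemma chase explicitly, and you handle arbitrary functors via $\cre_1$ and the $n=1$ case rather than first reducing to constant-free functors.
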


\begin{proof}
 The reduction to the case where the functors are constant-free is left to the 
reader. From the recursive definition of cross-effects, it is straightforward to reduce 
 to the case $n=2$. Now, for $F$ constant-free, as in \cite[Section 1]{BP}, 
there is a natural short exact sequence of groups
\[
 \cre_2 F (C, D) 
 \rightarrow 
 F (C \odot D) 
 \rightarrow 
 F(C) \times F(D).
\]
Hence the result follows by the nine (or $3 \times 3$) Lemma in the category 
$\grp$. 
\end{proof}

This immediately provides the {\em thickness} of the polynomial property:

\begin{cor}
\label{cor:thick_poly}
For 
 $
  K 
  \rightarrow 
  G \rightarrow 
  Q 
 $ 
 a short exact sequence of $\grp^\calc$ and $n \in \nat$, 
 $G$ has polynomial degree $\leq n$ if and only if both $K$ and $Q$ have 
polynomial degree $\leq n$. 
\end{cor}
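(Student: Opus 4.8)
The plan is to obtain the statement as a purely formal consequence of the exactness of cross-effects, namely Proposition \ref{prop:exact_cre}, together with the definition of polynomial degree (Definition \ref{def:poly_general}), according to which a functor $F \in \ob \grp^\calc$ has polynomial degree $\leq n$ precisely when $\cre_{n+1} F = e$ is the constant trivial functor.

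First I would apply Proposition \ref{prop:exact_cre} to the given short exact sequence $K \rightarrow G \rightarrow Q$ with cross-effect index $n+1$ (admissible since $n+1 \geq 1$), obtaining a short exact sequence in $\grp^{\calc^{\times (n+1)}}$:
\[
 \cre_{n+1} K
 \rightarrow
 \cre_{n+1} G
 \rightarrow
 \cre_{n+1} Q .
\]
Evaluated at any object of $\calc^{\times (n+1)}$, this is a short exact sequence of groups, so it remains only to invoke the elementary fact that in a short exact sequence $A \hookrightarrow B \twoheadrightarrow C$ of groups the middle term $B$ is trivial if and only if both $A$ and $C$ are trivial. If $G$ is polynomial of degree $\leq n$, then $\cre_{n+1} G = e$, which forces $\cre_{n+1} K = e$ and $\cre_{n+1} Q = e$, i.e. both $K$ and $Q$ are polynomial of degree $\leq n$. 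Conversely, if $\cre_{n+1} K = e$ and $\cre_{n+1} Q = e$, then exactness forces $\cre_{n+1} G = e$, so $G$ is polynomial of degree $\leq n$.

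The argument presents no genuine obstacle: all the substance is carried by Proposition \ref{prop:exact_cre}, whose proof already handles the reduction to constant-free functors and the appeal to the nine lemma in $\grp$. The only remaining point is the group-theoretic triviality observation, which is checked objectwise on $\calc^{\times(n+1)}$, so that no additional naturality argument is required.
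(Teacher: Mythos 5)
Your proposal is correct and matches the paper's intended argument: the paper states Corollary \ref{cor:thick_poly} as an immediate consequence of Proposition \ref{prop:exact_cre}, and your spelled-out details (applying the proposition at index $n+1$ and using the objectwise fact that the middle term of a short exact sequence of groups is trivial if and only if both ends are) are precisely what "immediately provides" elides.
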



\begin{thebibliography}{MS68b}

\bibitem[BP99]{BP}
Hans-Joachim Baues and Teimuraz Pirashvili, \emph{Quadratic endofunctors of the
  category of groups}, Adv. Math. \textbf{141} (1999), no.~1, 167--206.
  \MR{1667150}

\bibitem[CCS07]{CCS}
Nat\`alia Castellana, Juan~A. Crespo, and J\'er\^ome Scherer,
  \emph{Deconstructing {H}opf spaces}, Invent. Math. \textbf{167} (2007),
  no.~1, 1--18. \MR{2264802}

\bibitem[EM50]{EM}
Samuel Eilenberg and Saunders MacLane, \emph{Cohomology theory of {A}belian
  groups and homotopy theory. {II}}, Proc. Nat. Acad. Sci. U. S. A. \textbf{36}
  (1950), 657--663. \MR{0039252}

\bibitem[Gro98]{G}
Jesper Grodal, \emph{The transcendence degree of the mod {$p$} cohomology of
  finite {P}ostnikov systems}, Stable and unstable homotopy ({T}oronto, {ON},
  1996), Fields Inst. Commun., vol.~19, Amer. Math. Soc., Providence, RI, 1998,
  pp.~111--130. \MR{1622342}

\bibitem[HLS93]{HLS}
Hans-Werner Henn, Jean Lannes, and Lionel Schwartz, \emph{The categories of
  unstable modules and unstable algebras over the {S}teenrod algebra modulo
  nilpotent objects}, Amer. J. Math. \textbf{115} (1993), no.~5, 1053--1106.
  \MR{1246184}

\bibitem[Kuh94]{KI}
Nicholas~J. Kuhn, \emph{Generic representations of the finite general linear
  groups and the {S}teenrod algebra. {I}}, Amer. J. Math. \textbf{116} (1994),
  no.~2, 327--360. \MR{1269607}

\bibitem[Kuh98]{K_comp}
\bysame, \emph{Computations in generic representation theory: maps from
  symmetric powers to composite functors}, Trans. Amer. Math. Soc. \textbf{350}
  (1998), no.~10, 4221--4233. \MR{1443197}

\bibitem[Kuh14]{K_krull}
\bysame, \emph{The {K}rull filtration of the category of unstable modules over
  the {S}teenrod algebra}, Math. Z. \textbf{277} (2014), no.~3-4, 917--936.
  \MR{3229972}

\bibitem[Kuh15]{K_coprime}
\bysame, \emph{Generic representation theory of finite fields in nondescribing
  characteristic}, Adv. Math. \textbf{272} (2015), 598--610. \MR{3303242}

\bibitem[Lan92]{La}
Jean Lannes, \emph{Sur les espaces fonctionnels dont la source est le
  classifiant d'un {$p$}-groupe ab\'elien \'el\'ementaire}, Inst. Hautes
  \'Etudes Sci. Publ. Math. (1992), no.~75, 135--244, With an appendix by
  Michel Zisman. \MR{1179079}

\bibitem[LS89]{LS}
Jean Lannes and Lionel Schwartz, \emph{Sur les groupes d'homotopie des espaces
  dont la cohomologie modulo {$2$} est nilpotente}, Israel J. Math. \textbf{66}
  (1989), no.~1-3, 260--273. \MR{1017166}

\bibitem[MM65]{MM}
John~W. Milnor and John~C. Moore, \emph{On the structure of {H}opf algebras},
  Ann. of Math. (2) \textbf{81} (1965), 211--264. \MR{0174052}

\bibitem[MS68a]{MS1}
John~C. Moore and Larry Smith, \emph{Hopf algebras and multiplicative
  fibrations. {I}}, Amer. J. Math. \textbf{90} (1968), 752--780. \MR{0234455}

\bibitem[MS68b]{MS2}
\bysame, \emph{Hopf algebras and multiplicative fibrations. {II}}, Amer. J.
  Math. \textbf{90} (1968), 1113--1150. \MR{0238323}

\bibitem[Pas79]{Passi}
Inder Bir~S. Passi, \emph{Group rings and their augmentation ideals}, Lecture
  Notes in Mathematics, vol. 715, Springer, Berlin, 1979. \MR{537126}

\bibitem[Qui72]{Q}
Daniel Quillen, \emph{On the cohomology and {$K$}-theory of the general linear
  groups over a finite field}, Ann. of Math. (2) \textbf{96} (1972), 552--586.
  \MR{0315016}

\bibitem[Sch94]{S}
Lionel Schwartz, \emph{Unstable modules over the {S}teenrod algebra and
  {S}ullivan's fixed point set conjecture}, Chicago Lectures in Mathematics,
  University of Chicago Press, Chicago, IL, 1994. \MR{1282727}

\end{thebibliography}
\providecommand{\bysame}{\leavevmode\hbox to3em{\hrulefill}\thinspace}
\providecommand{\MR}{\relax\ifhmode\unskip\space\fi MR }
\providecommand{\MRhref}[2]{%
  \href{http://www.ams.org/mathscinet-getitem?mr=#1}{#2}
}
\providecommand{\href}[2]{#2}

 \end{document}